\documentclass[12pt, letterpaper]{amsart}
\usepackage{color}
\oddsidemargin0.25in
\evensidemargin0.25in
\textwidth6.00in
\topmargin0.00in
\textheight8.50in

\newcommand{\indentalign}{\hspace{0.3in}&\hspace{-0.3in}}
\newcommand{\la}{\langle}
\newcommand{\ra}{\rangle}

\renewcommand{\Im}{\operatorname{Im}}

\newcommand{\R}{\mathbb{R}}
\newcommand{\C}{\mathbb{C}}

\usepackage{amssymb}
\usepackage{amsthm}
\usepackage{amsxtra}
\usepackage{graphicx}

\newtheorem{theorem}{Theorem}

\newtheorem{proposition}[theorem]{Proposition}
\newtheorem{lemma}[theorem]{Lemma}

\theoremstyle{remark}

\numberwithin{equation}{section}

\numberwithin{theorem}{section}

\numberwithin{table}{section}

\numberwithin{figure}{section}

\ifx\pdfoutput\undefined
  \DeclareGraphicsExtensions{.pstex, .eps}
\else
  \ifx\pdfoutput\relax
    \DeclareGraphicsExtensions{.pstex, .eps}
  \else
    \ifnum\pdfoutput>0
      \DeclareGraphicsExtensions{.pdf}
    \else
      \DeclareGraphicsExtensions{.pstex, .eps}
    \fi
  \fi
\fi

\begin{document}

\title{Scattering for the $L^2$ supercritical point NLS}

\author[Riccardo Adami]{Riccardo Adami$^{\scriptsize 1}$}
\author[Reika Fukuizumi]{Reika Fukuizumi$^{\scriptsize 2}$}
\author[Justin Holmer]{Justin Holmer$^{\scriptsize 3}$}

\keywords{Schr\"odinger equation, nonlinear point interaction, scattering}

\subjclass{}

\maketitle
\begin{center} 
$^1$ DISMA, Politecnico di Torino, Italy; \\
\email{riccardo.adami@polito.it}
\end{center}

\begin{center} 
$^2$ Graduate School of Information Sciences, Tohoku University, Japan; \\
\email{fukuizumi@math.is.tohoku.ac.jp}
\end{center}

\begin{center} 
$^3$ Department of Mathematics, Brown University, USA; \\
\email{holmer@math.brown.edu}
\end{center}

\begin{abstract}
We consider the 1D nonlinear Schr\"odinger equation with focusing point nonlinearity. 
``Point'' means that the pure-power nonlinearity has an inhomogeneous potential and the potential 
is the delta function supported at the origin. This equation is used to model a Kerr-type medium with 
a narrow strip in the optic fibre. There are several mathematical studies on this equation and 
the local/global existence of solution, blow-up occurrence and blow-up profile have been investigated.    
In this paper we focus on the asymptotic behavior of the global solution, i.e, 
we show that the global solution scatters as $t\to \pm \infty$ in the $L^2$ supercritical case. 
The main argument we use is due to Kenig-Merle, but 
it is required to make use of an appropriate function space (not Strichartz space) 
according to the smoothing properties of the associated integral equation. 
\end{abstract}

\section{Introduction} 
In this paper, we address a theoretical study on a model, proposed in \cite{MB}, that describes   
a wave propagation in a 1D linear medium containing a narrow strip of nonlinear material, where 
the nonlinear strip is assumed to be much smaller than the typical wavelength. 
Considering such nonlinear strip may allow to model a wave propagation in nanodevices, in particular 
the authors in \cite{HTMG} consider some nonlinear quasi periodic super lattices and 
investigate an interplay between the nonlinearity and the quasi periodicity.  
Such a strip is described as an impurity, i.e. a delta measure 
in the nonlinearity of nonlinear Schr\"odinger equation. 
For applications in nanodevices, it should be important to study NLS 
with a quasi periodic location of delta measures, but in this paper, 
as a first step, we will treat 
the Schr\"odinger equation which has only one impurity in the nonlinearity:  
\begin{equation}
\left\{
\label{E:pNLS}
\begin{array}{ll}
i\partial_t \psi + \partial_x^2 \psi + K(x) |\psi|^{p-1}\psi =0, & t\in \R,~ x\in \R \\
\psi(x,0)=\psi_0(x) & 
\end{array}
\right.
\end{equation}
where $p>1$, and $K=\delta$, $\delta$ is the Dirac mass at $x=0.$ 
This singularity in the nonlinearity is interpreted as the linear Schr\"odinger equation:
\begin{equation*}
i\partial_t \psi + \partial_x^2 \psi  =0,  \qquad t\in \R,  \quad x \ne 0 
\end{equation*}
together with the jump condition at $x=0$
\begin{eqnarray*}
&&\psi(0,t) := \psi (0-, t)= \psi(0+, t) \\ 
&&\partial_x \psi (0+, t) - \partial_x \psi (0-,t)=-|\psi(0,t)|^{p-1} \psi(0,t). 
\end{eqnarray*}
Remark that this equation (\ref{E:pNLS}) also appears 
as a limiting case of nonlinear Schr\"odinger equation 
with a concentrated nonlinearity (see \cite{CFNT}). 
\vspace{3mm}

In \cite{AT, HL1}, it was proved that the equation (\ref{E:pNLS})  
is locally well-posed for any $\psi_0 \in H^1 (\mathbb{R})$ for $p>1$,  
and Equation (\ref{E:pNLS}) has two conservative quantities: 
the mass
$$M(\psi) = \int |\psi|^2$$
and the energy
$$E(\psi) = \frac12 \int |\partial_x \psi|^2 - \frac{1}{p+1} |\psi(0)|^{p+1}.$$
The mass condition for the global existence/blow-up, further an analysis of the blow-up profile were  
established in \cite{HL1, HL2}. Furthermore, 
the problem of asymptotic stability of the standing waves of equation (\ref{E:pNLS})  
has been treated in \cite{BKKS} and \cite{KKS}.  

As far as we know, the asymptotic behavior, in particular, 
the scattering of the solution is not known for (\ref{E:pNLS}). 
For the standard NLS, i.e. $K\equiv 1$, in one dimensional case, 
such a result in $H^1$ was firstly established in \cite{N}. 
This topic has been very active these decades  
thanks to a breakthrough result by Kenig-Merle \cite{KM}. 
Our proof therefore essentially will be based 
on Kenig-Merle \cite{KM}, and some results after  
\cite{KM}, for example \cite{HR}. However, it is required to
make use of an appropriate function space (not Strichartz space) according to the
smoothing properties of the associated integral equation to (\ref{E:pNLS}).

Higher-dimensional models with a generalization of the delta potential have been introduced in \cite{ADFT} 
and in \cite{CCT} for the three and two-dimensional setting, respectively. 
While, at a qualitative level, the model in dimension three behaves like that in dimension one, 
the two-dimensional setting displays some uncommon features 
still to be understood (for the analysis of the blow-up, see \cite{ACCT}).
\vspace{3mm}

We remark that the model of a NLS with a standard power nonlinearity and a linear point interaction
has been studied in \cite{BV}.
\vspace{3mm}

{\bf Notation.} If $I$ is an interval of $\R$, 
%$E$ is a Banach space,
and $1\le r\le \infty$, then $L^r_I$ is the space of
strongly Lebesgue measurable, complex-valued functions $v$ from $I$ into $\C$ satisfying 
$\|v\|_{L^r_I}:= \int_{I}|v(t)|^r dt <+\infty$ if $r<+\infty$, when $r=+\infty$, $\|v\|_{L^{\infty}_I}:=\sup_{t\in I} |v(t)|<+\infty$.
The space $C_I^0 E$ denotes the space of continuous functions on $I$ with values in a Banach space $E$. 

For $s\in \R,$ we define the Sobolev space   
\begin{equation*}
{H}^{s}=
\{v \in \mathcal{S}'(\R),~ \|v\|_{{H}^s}:=\|(1+|\xi|^2)^{\frac{s}{2}} \widehat{v}(\xi)\|_{L^2_{\R}} <+\infty \},
\end{equation*}
and the homogeneous Sobolev space 
\begin{equation*}
\dot{H}^{s}=
\{v \in \mathcal{S}'(\R),~ \|v\|_{\dot{H}^{s}}:=\||\xi|^s \widehat{v}(\xi)\|_{L^2_{\R}} <+\infty \},
\end{equation*}
where $\widehat f$ is the Fourier transform of the function $f$. Thus, $H^0=\dot{H}^0=L^2_{\R},$ and this will be simply denoted 
as $L^2$.
Sometimes we put an index $t$ or $x$ 
like $\dot{H}^s_t$ or $\dot{H}^s_x$ to enlighten which variable concerns.   
For $\alpha \in \R$, $|\nabla|^{\alpha}$ 
denotes the Fourier multiplier with symbol $|\xi|^{\alpha}.$
For $s\ge 0$, define $v\in H^s_I$ if, when $v(x)$ is extended to $\tilde{v}(x)$ on $\R$ by setting
$\tilde{v}(x) = 0$ for $x \notin I$, then $\tilde{v} \in  H^s$; in this case we set $\|v \|_{H^s_I} = \| \tilde{v} \|_{H^s}.$
Finally, $\chi_I$ denotes the characteristic function for the interval $I \subset \R$.
\vspace{3mm}

The equation (\ref{E:pNLS}) has a scaling invariance: if $\psi(x,t)$ is a solution to (\ref{E:pNLS}) 
then $\lambda^{\frac{1}{p-1}} \psi(\lambda x, \lambda^2 t)$, $\lambda>0$ is also. The scale-invariant Sobolev 
space for (\ref{E:pNLS}) is $\dot{H}^{\sigma_c}$ with 
$$\sigma_c = \frac12 - \frac1{p-1},$$
thus, for (\ref{E:pNLS}), $p=3$ is the $L^2$ critical setting. 
If $p>3$, then $0<\sigma_c< \frac12$ and
$$\frac14<\frac{2\sigma_c+1}{4}<\frac12, \quad 
-\frac14<\frac{2\sigma_c-1}{4}<0.$$ 
We take $q$ and $\tilde q$ to be given by
$$\frac{1}{q} = \frac12 - \frac{2\sigma_c+1}{4}, \quad 
\frac{1}{2}=\frac{1}{\tilde q} - \frac{1-2\sigma_c}{4},$$
and from the definition of $\sigma_c$, we find that
$$q = 2(p-1) \,, \qquad \tilde q = \frac{2(p-1)}{p}.$$
In the remainder of the paper, once $p>3$ is selected, 
we will take $\sigma_c$, $q$ and $\tilde q$ to have the corresponding values as defined above.  
\vspace{3mm}

Recall that by Sobolev embedding, one has
$$\| \psi \|_{L_{\R}^q} \lesssim \| \psi \|_{\dot H^{\frac{2\sigma_c+1}{4}}}, \qquad
\| f \|_{\dot H^{\frac{2\sigma_c-1}{4}}} \lesssim \| f \|_{L_{\R}^{\tilde q}}.$$
More generally than the above case, $\sigma_c$ should satisfy $-\frac{1}{2} \le \sigma_c<\frac{1}{2}$ to apply this Sobolev embedding, 
that is, the case $\sigma_c=0$ (namely $p=3$) is included for this embedding.  
\vspace{3mm}

First, we recall here the local wellposedness result of (\ref{E:pNLS}) established in Theorem 1.1 of \cite{HL1}. 

\begin{proposition} Let $p>1$ and $\psi_0 \in H^1.$ Then, there exist
$T^*>0$ and a solution $\psi(x,t)$ to (\ref{E:pNLS}) on $[0,T^*)$ satisfying for $T<T^*,$ 
\begin{eqnarray*}
&& \psi \in C^0_{[0,T]} H^1_x  \cap C^0_{\R} H_{(0,T)}^{\frac{3}{4}}, \\
&& \partial_x \psi \in C^0_{\R_x \setminus \{0\}} H_{(0,T)}^{\frac{1}{4}}.  
\end{eqnarray*}
Here, the derivatives $\partial_x \psi(0^{\pm}, t):= \lim_{x \to \pm 0 } \partial_x \psi (x,t)$, 
exist in the sense of $H^{\frac{1}{4}}_{(0,T)}$
and $\psi$ satisfies 
$$ \partial_x \psi(0^+, t)-\partial_x \psi(0^-, t)=-|\psi(0,t)|^{p-1} \psi(0,t)$$ 
as an equality of $H_{(0,T)}^{\frac{1}{4}}$ functions (not pointwisely in $t$). 

Among all solutions satisfying the above regularity conditions, it is unique. 
Moreover, the data-to-solution map $\psi_0 \mapsto \psi$, as a map $H_x^1 \to C^0_{[0,T]} H_x^1$, is continuous, and 
if $T^*<+\infty,$ then $\lim_{t \uparrow T^*} \|\partial_x \psi(t)\|_{L^2_{\R}}=+\infty.$ 
\end{proposition}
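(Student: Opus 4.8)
The proof recasts (\ref{E:pNLS}) as an integral equation in which the entire solution is slaved to its boundary trace $u(t)\defeq\psi(0,t)$. Because $K=\delta$, Duhamel's formula reads
$$\psi(x,t) = e^{it\partial_x^2}\psi_0(x) + i\int_0^t e^{i(t-s)\partial_x^2}\delta(x)\,|u(s)|^{p-1}u(s)\,ds,$$
and since $e^{it\partial_x^2}\delta(x)=(4\pi it)^{-1/2}e^{ix^2/(4t)}$, evaluation at $x=0$ yields the closed scalar equation
$$u(t)=\big(e^{it\partial_x^2}\psi_0\big)(0)+c\int_0^t (t-s)^{-1/2}\,|u(s)|^{p-1}u(s)\,ds=:\Phi(u)(t)$$
for a fixed constant $c$. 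The plan is: (a) solve this scalar equation for $u$ by a contraction mapping in $H^{3/4}_{(0,T)}$ on a short interval; (b) feed $u$ back into the first displayed formula to reconstruct $\psi$ and verify \emph{a posteriori} that it has the asserted space--time regularity, that $\partial_x\psi(0^\pm,\cdot)$ exist in $H^{1/4}_{(0,T)}$, and that the jump relation holds; (c) deduce uniqueness, continuous dependence and the blow-up alternative from the same estimates.

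The three analytic inputs are: (i) the Kato-type trace/smoothing estimates for the free group, $\|e^{it\partial_x^2}f\|_{L^\infty_x \dot H^{(2s+1)/4}_t}\lesssim \|f\|_{\dot H^s_x}$, which with $s=1$ places $\big(e^{it\partial_x^2}\psi_0\big)(0)$ in $H^{3/4}_{(0,T)}$ and, applied to $\partial_x e^{it\partial_x^2}\psi_0=e^{it\partial_x^2}\partial_x\psi_0$ with $s=0$, places the $x$-derivative trace in $L^\infty_x H^{1/4}_{(0,T)}$, together with the corresponding estimates for the inhomogeneous (retarded) term obtained from these by cutoff/Christ--Kiselev arguments on the finite interval; (ii) the half-derivative smoothing of the Riemann--Liouville operator $I_{1/2}g(t)=\int_0^t(t-s)^{-1/2}g(s)\,ds$, namely $\|I_{1/2}g\|_{H^{a+1/2}_{(0,T)}}\lesssim T^{\theta}\|g\|_{H^{a}_{(0,T)}}$ for suitable $a$ and some $\theta>0$ --- this gain, absent in Strichartz spaces, is exactly what makes the $H^{3/4}$-scale natural here; and (iii) Moser-type estimates for the nonlinearity, using $H^{3/4}_{(0,T)}\hookrightarrow C^0_{[0,T]}$ and the fact that $z\mapsto|z|^{p-1}z$ is Lipschitz on bounded sets for $p>1$, giving $\||u|^{p-1}u\|_{H^{3/4}_{(0,T)}}\lesssim \|u\|_{L^\infty}^{p-1}\|u\|_{H^{3/4}_{(0,T)}}$ and the analogous bound on differences. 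Combining (i)--(iii), for $T=T(\|\psi_0\|_{H^1})$ small enough $\Phi$ maps a ball of $H^{3/4}_{(0,T)}$ into itself and is a contraction, producing the trace $u$; substituting it into the spatial Duhamel formula and invoking the inhomogeneous version of (i) for the $C^0_{[0,T]}H^1_x$-component and for the $\partial_x$-trace reproduces every regularity class in the statement, while differentiating across $x=0$ recovers $\partial_x\psi(0^+,t)-\partial_x\psi(0^-,t)=-|\psi(0,t)|^{p-1}\psi(0,t)$ as an identity in $H^{1/4}_{(0,T)}$.

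Uniqueness among solutions with the stated regularity follows by subtracting two such solutions, which satisfy the same integral equation, and running the difference estimates of (ii)--(iii) on a short interval, then chaining intervals to cover $[0,T]$; continuous dependence $H^1_x\to C^0_{[0,T]}H^1_x$ is the same Lipschitz bound in these norms. For the blow-up alternative, mass conservation bounds $\|\psi(t)\|_{L^2}$; hence if $\|\partial_x\psi(t)\|_{L^2}$ remained bounded as $t\uparrow T^*<\infty$, then $\sup_{t<T^*}\|\psi(t)\|_{H^1}<\infty$, and since the existence time above depends only on this $H^1$-norm one could restart the construction from some $t_0<T^*$ and continue past $T^*$, contradicting maximality. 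The main obstacle is step (i): establishing the free-group trace estimates and, above all, the \emph{inhomogeneous, finite-time} smoothing bounds in the precise $H^{3/4}_{(0,T)}$/$H^{1/4}_{(0,T)}$ form actually required --- including the low-frequency/endpoint behavior of $I_{1/2}$ on $H^s$ of a bounded interval and the compatibility of the ``$3/4$'' temporal regularity natural for $H^1_x$ data with the ``$1/4$'' regularity of the derivative trace. Once these function-space estimates are in hand, the contraction, uniqueness and bootstrap arguments are routine.
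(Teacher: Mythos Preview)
The paper contains no proof of this proposition: it is explicitly recalled from Theorem~1.1 of \cite{HL1}, so there is no ``paper's own proof'' to compare against. That said, your outline is essentially the approach of \cite{HL1} and \cite{AT}: reduce via Duhamel to the scalar Volterra--Abel equation for the trace $u(t)=\psi(0,t)$ (this is precisely equation \eqref{E:charge} in the present paper), run a contraction in a time-Sobolev space using the Kato smoothing estimate of Proposition~\ref{P:smoothing}(1) for the linear piece and the half-derivative gain of $I_{1/2}$ (cf.\ Proposition~\ref{P:smoothing}(2)) for the Duhamel piece, then rebuild $\psi$ in $C^0_tH^1_x$ and read off the derivative trace and the jump condition.

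One small inconsistency worth fixing: your Moser estimate (iii) is stated with $H^{3/4}$ on both sides, but to combine it with the $I_{1/2}$-smoothing of (ii) at target regularity $a+\tfrac12=\tfrac34$ you only need $|u|^{p-1}u\in H^{1/4}_{(0,T)}$; the small-$T$ contraction factor then comes from the embedding $H^{3/4}_{(0,T)}\hookrightarrow H^{1/4}_{(0,T)}$ on the bounded interval (or from an $L^\infty$ splitting using $|I_{1/2}g(t)|\lesssim t^{1/2}\|g\|_{L^\infty}$), not from $I_{1/2}$ itself, which gains exactly half a derivative with no extra power of $T$ at the homogeneous level. With that adjustment the scheme is correct.
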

\vspace{3mm}

Hereafter, the solution to (\ref{E:pNLS}) satisfying the above regularity condition will be referred to as 
$H^1_x$ solution to (\ref{E:pNLS}). 

\vspace{3mm}

The local virial identity has been also proved in \cite{HL1}.  
For any smooth weight function $a(x)$ satisfying  $a(0)=\partial_x a(0)=\partial_x^{(3)} a(0)=0$, 
the solution $\psi$ to \eqref{E:pNLS} satisfies
\begin{equation}
\label{E:local-virial}
\partial_t^2 \int a(x) |\psi|^2 \, dx = 4 \int \partial_x^{(2)} a |\partial_x \psi|^2 
- 2 \partial_x^{(2)} a(0)|\psi(0)|^{p+1}-\int \partial_{x}^{(4)} a |\psi|^2.
\end{equation}

\begin{proposition}[{\cite[Prop 1.3]{HL1}} sharp Gagliardo-Nirenberg inequality]
\label{P:sharpGN}
For any $\psi \in H^1$, 
\begin{equation}
\label{E:1-126}
|\psi(0)|^2 \leq \| \psi \|_{L^2} \|\partial_x \psi \|_{L^2} \,.
\end{equation} Equality is achieved if and 
only if there exist $\theta\in \mathbb{R}$, $\alpha>0$ and $\beta>0$  such that  $\psi(x)=\alpha e^{i\theta}\varphi_0(\beta x)$, 
where $\varphi_0=2^{\frac{1}{p-1}} e^{-|x|}$ is the ground state solution to (\ref{E:pNLS}) (see \cite{HL1}).
\end{proposition}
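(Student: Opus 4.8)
The plan is to derive \eqref{E:1-126} from the fundamental theorem of calculus together with Cauchy--Schwarz (this is the familiar one-dimensional Agmon-type estimate evaluated at $x=0$), and then to read off the equality case by keeping track of exactly which inequalities are used. Since any $\psi\in H^1(\R)$ has an absolutely continuous representative with $\psi(x)\to 0$ as $|x|\to\infty$, I would write
\[
|\psi(0)|^2 = -\int_0^\infty \frac{d}{dx}|\psi(x)|^2\,dx = \int_{-\infty}^0 \frac{d}{dx}|\psi(x)|^2\,dx,
\]
use $\frac{d}{dx}|\psi|^2 = 2\Re(\overline{\psi}\,\partial_x\psi)$ a.e., and average the two identities to obtain
\[
2|\psi(0)|^2 = 2\int_{-\infty}^0 \Re(\overline{\psi}\,\partial_x\psi)\,dx - 2\int_0^\infty \Re(\overline{\psi}\,\partial_x\psi)\,dx \le 2\int_{\R}|\psi|\,|\partial_x\psi|\,dx \le 2\|\psi\|_{L^2}\|\partial_x\psi\|_{L^2},
\]
which is \eqref{E:1-126}. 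One could instead first replace $\psi$ by $|\psi|$ using $|\,\partial_x|\psi|\,|\le|\partial_x\psi|$, but then the phase information needed below would have to be recovered separately.

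For the equality statement, suppose \eqref{E:1-126} is an equality and $\psi\not\equiv 0$; then both inequalities above are equalities. This forces, first, $\Re(\overline{\psi}\,\partial_x\psi)=-|\psi|\,|\partial_x\psi|$ a.e.\ on $(0,\infty)$ and $\Re(\overline{\psi}\,\partial_x\psi)=+|\psi|\,|\partial_x\psi|$ a.e.\ on $(-\infty,0)$, and, second (by Cauchy--Schwarz), $|\partial_x\psi(x)|=\beta|\psi(x)|$ a.e.\ for some constant $\beta\ge 0$. Note $\psi(0)\ne 0$: otherwise the left-hand side of \eqref{E:1-126} vanishes, hence $\partial_x\psi\equiv 0$ and $\psi\equiv 0$. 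So $\{\psi\ne 0\}$ is a nonempty open set containing $0$; on its connected component $(a,b)\ni 0$ I would write $\psi=\rho\,e^{i\phi}$ with $\rho=|\psi|>0$ and $\phi$ continuous and locally $H^1$, so that $\overline{\psi}\,\partial_x\psi=\rho\,\partial_x\rho + i\rho^2\,\partial_x\phi$. The first condition forces this to be real, hence $\partial_x\phi=0$ and $\phi\equiv\theta$ is constant on $(a,b)$; then $|\partial_x\psi|=|\partial_x\rho|$ there, and combining with the two conditions gives $\partial_x\rho=-\beta\rho$ on $(0,b)$ and $\partial_x\rho=\beta\rho$ on $(a,0)$, so that $\rho(x)=\rho(0)e^{-\beta|x|}$ on $(a,b)$. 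Since $\rho$ cannot vanish at a finite endpoint of its component and must decay at $\pm\infty$, this forces $(a,b)=\R$ and $\beta>0$, i.e.\ $\psi(x)=\rho(0)\,e^{i\theta}e^{-\beta|x|}$. Recalling $\varphi_0(x)=2^{1/(p-1)}e^{-|x|}$, this is precisely $\psi(x)=\alpha e^{i\theta}\varphi_0(\beta x)$ with $\alpha=2^{-1/(p-1)}\rho(0)>0$.

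For the converse I would simply substitute: for $\psi(x)=\alpha e^{i\theta}\varphi_0(\beta x)=\alpha\,2^{1/(p-1)}e^{i\theta}e^{-\beta|x|}$ one computes $|\psi(0)|^2=\alpha^2 2^{2/(p-1)}$, $\|\psi\|_{L^2}^2=\alpha^2 2^{2/(p-1)}/\beta$, and $\|\partial_x\psi\|_{L^2}^2=\alpha^2 2^{2/(p-1)}\beta$, whence $\|\psi\|_{L^2}\|\partial_x\psi\|_{L^2}=\alpha^2 2^{2/(p-1)}=|\psi(0)|^2$, so equality holds. I expect the only genuinely delicate point to be the equality analysis, where one must allow $\psi$ to vanish somewhere and control the phase $\phi$ componentwise; locating $0$ inside a nonvanishing component and propagating the exponential ODE outward from there is the cleanest way to handle this.
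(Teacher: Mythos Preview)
The paper does not actually supply a proof of Proposition~\ref{P:sharpGN}; it is quoted from \cite{HL1} and used as a black box. So there is nothing in the paper to compare your argument against directly.

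That said, your proof is correct and is the standard one-dimensional Agmon/Gagliardo argument: the inequality follows from the fundamental theorem of calculus applied to $|\psi|^2$ on each half-line, the pointwise bound $|\Re(\overline\psi\,\partial_x\psi)|\le |\psi|\,|\partial_x\psi|$, and Cauchy--Schwarz. Your analysis of the equality case is also sound: tracing back equality in Cauchy--Schwarz gives $|\partial_x\psi|=\beta|\psi|$ a.e.\ globally, and equality in the pointwise step forces $\overline\psi\,\partial_x\psi$ to be real with the correct sign on each half-line; together with the polar decomposition on the connected component of $\{\psi\neq 0\}$ containing $0$ this yields the constant-phase exponential $\psi(x)=\rho(0)e^{i\theta}e^{-\beta|x|}$ there, and the continuity argument pushes the component out to all of $\R$. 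The converse computation is routine and correct. The only point worth stating a little more explicitly is why $\overline\psi\,\partial_x\psi$ must be \emph{real} on $(a,b)$: you use that $\Re z=\pm|z|$ forces $z\in\R$, which is of course immediate but is the step that kills $\partial_x\phi$.
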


\begin{theorem}[{\cite[Prop 1.4]{HL1}} $L^2$ supercritical global existence/blow-up dichotomy]
\label{T:global-blowup}
Suppose that $\psi(t)$ is an $H_x^1$ solution of \eqref{E:pNLS} for $p>3$ satisfying
\begin{equation} \label{E:ME}
M(\psi_0)^{\frac{1-\sigma_c}{\sigma_c}} E(\psi_0)< M(\varphi_0)^{\frac{1-\sigma_c}{\sigma_c}} E(\varphi_0).
\end{equation}
Let
$$\eta(t) 
= \frac{ \|\psi\|_{L^2}^{\frac{1-\sigma_c}{\sigma_c}} \|\partial_x \psi (t) \|_{L^2}}{\| \varphi_0 \|_{L^2}^{\frac{1-\sigma_c}{\sigma_c}} 
\| \partial_x \varphi_0 \|_{L^2}}$$
Then
\begin{enumerate}
\item If $\eta(0)<1$, then the solution $\psi(t)$ is global in both time directions and $\eta(t)<1$ for all $t\in \mathbb{R}$.
\item If $\eta(0)>1$, then the solution $\psi(t)$ blows-up 
in the negative time direction at some $T_-<0$, blows-up in the positive time direction at some $T_+>0$, and $\eta(t)>1$ for all $t\in (T_-,T_+)$.   
\end{enumerate}
\end{theorem}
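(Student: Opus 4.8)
The plan is to prove this via the standard concavity/bootstrap argument built on the local virial identity \eqref{E:local-virial}, in parallel with the $H^1$ theory for the classical NLS (as in Holmer--Roudenko), adapting every step to the point-interaction structure. The key observation is that by the sharp Gagliardo--Nirenberg inequality \eqref{E:1-126} one can bound the energy from below by a function of $\|\partial_x\psi\|_{L^2}$: writing $x = \|\partial_x\psi(t)\|_{L^2}$ and using mass conservation, one gets $E(\psi_0) = E(\psi(t)) \ge \frac12 x^2 - \frac{1}{p+1} M(\psi_0)^{\frac{p+1}{2}} x^{\frac{p-1}{2}}$, whose right-hand side $f(x)$ has a unique positive critical point $x_*$, and $f(x_*)$ equals (after the algebra) exactly the threshold quantity $M(\varphi_0)^{\frac{1-\sigma_c}{\sigma_c}}E(\varphi_0) / M(\psi_0)^{\frac{1-\sigma_c}{\sigma_c}}$ up to the normalization in $\eta$. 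The point is that $\varphi_0$ is the optimizer of \eqref{E:1-126}, so the coefficients in $f$ are sharp and the threshold is attained exactly at $x = \|\partial_x\varphi_0\|_{L^2}$ after rescaling.

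First I would state and verify this variational lemma: under hypothesis \eqref{E:ME}, the quantity $M(\psi_0)^{\frac{1-\sigma_c}{\sigma_c}} E(\psi_0)$ lies strictly below the maximum value $M(\psi_0)^{\frac{1-\sigma_c}{\sigma_c}} f(x_*)$ of the barrier, so the continuous function $t \mapsto x(t) = \|\partial_x\psi(t)\|_{L^2}$ can never cross $x_*$. Thus the sign of $\eta(t) - 1$ (equivalently, whether $x(t) < x_*$ or $x(t) > x_*$) is an invariant of the flow, giving $\eta(t) < 1$ for all $t$ in case (1) and $\eta(t) > 1$ in case (2) as long as the solution exists. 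The trapping in case (1) together with the blow-up alternative from Proposition~2.1 (namely $\|\partial_x\psi(t)\|_{L^2}\to\infty$ at a finite maximal time) immediately yields global existence, since $x(t)$ stays bounded by $x_*$.

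For case (2), the blow-up claim requires the local virial identity. Since $\psi_0 \in H^1$ is not assumed to have finite variance, I would use a truncated weight: pick $a(x)$ smooth with $a(x) = x^2$ for $|x| \le R$, $a$ constant for $|x| \ge 2R$, and $a(0) = a'(0) = a'''(0) = 0$ automatically for the even choice; then \eqref{E:local-virial} gives $\partial_t^2 \int a|\psi|^2 = 8\|\partial_x\psi\|_{L^2}^2 - 4|\psi(0)|^{p+1} + (\text{error terms supported in } R \le |x| \le 2R)$. The main term $8\|\partial_x\psi\|_{L^2}^2 - 4|\psi(0)|^{p+1}$ must be shown to be bounded above by a strictly negative multiple of $\|\partial_x\psi\|_{L^2}^2$ when $\eta(t) > 1$; this is again a consequence of the sharp Gagliardo--Nirenberg inequality combined with the energy identity and the gap in \eqref{E:ME}, exactly as in the classical argument, producing $\partial_t^2 \int a|\psi|^2 \le -\delta \|\partial_x\psi(t)\|_{L^2}^2 + C_R$ for some $\delta > 0$. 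A standard convexity argument (controlling the error terms $C_R$ by the mass, which is conserved, and choosing $R$ large, or alternatively using that $\|\partial_x\psi(t)\|_{L^2}$ is bounded below away from zero by the trapping) then forces $\int a|\psi|^2$ to become negative in finite time, contradicting positivity of $a$; hence the maximal existence time is finite in each direction.

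The main obstacle I anticipate is the bookkeeping for the error terms in the truncated virial identity for case (2): unlike the classical $L^2$-supercritical NLS, where one can exploit radiality in higher dimensions, here we are in $1$D with data only in $H^1$, so controlling $\int_{R \le |x| \le 2R} (\partial_x^{(2)}a)|\partial_x\psi|^2$ and $\int \partial_x^{(4)}a\,|\psi|^2$ uniformly requires care; one handles this by absorbing the first into $\epsilon\|\partial_x\psi\|_{L^2}^2$ for the full range (since $|\partial_x^{(2)}a|\lesssim 1$) and the second by $C R^{-2} M(\psi_0)$, then using the strict negativity margin from \eqref{E:ME} to beat these for $R$ sufficiently large. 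The variational lemma identifying $f(x_*)$ with the threshold, while conceptually the heart of the matter, is a finite computation using only $\varphi_0 = 2^{1/(p-1)}e^{-|x|}$ and the equality case of \eqref{E:1-126}, and I expect it to go through cleanly.
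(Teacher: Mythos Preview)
The paper does not prove this theorem; it is quoted from \cite[Prop.~1.4]{HL1} and used as a black box. Your outline is the standard Holmer--Roudenko argument adapted to the point nonlinearity, and it is essentially correct; this is almost certainly what \cite{HL1} does as well.

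One remark on case~(2): your handling of the truncated-virial error $\int_{R\le|x|\le 2R}\partial_x^{(2)}a\,|\partial_x\psi|^2$ is imprecise --- since $|\partial_x^{(2)}a|\sim 1$ on that annulus, this term is \emph{not} of size $\epsilon\|\partial_x\psi\|_{L^2}^2$ for small $\epsilon$. The clean fix (which you may have had in mind) is to choose $a$ with $0\le\partial_x^{(2)}a\le 2$ everywhere, so that $4\int\partial_x^{(2)}a\,|\partial_x\psi|^2\le 8\|\partial_x\psi\|_{L^2}^2$ globally; then the identity
\[
8\|\partial_x\psi\|_{L^2}^2-4|\psi(0)|^{p+1}=-2(p-3)\|\partial_x\psi\|_{L^2}^2+4(p+1)E(\psi_0),
\]
combined with the trapping $\|\partial_x\psi\|_{L^2}>x_*$ and \eqref{E:ME}, gives a fixed negative upper bound $4(p+1)(E(\psi_0)-f(x_*))<0$, and the only remaining error $\int\partial_x^{(4)}a\,|\psi|^2=O(R^{-2}M(\psi_0))$ is beaten by taking $R$ large. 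Note that this step is actually \emph{simpler} here than for the standard NLS, because the nonlinear term in \eqref{E:local-virial} depends only on $\partial_x^{(2)}a(0)$ and is therefore completely unaffected by the truncation at scale $R$ --- there is no analogue of the tail nonlinear error $\int_{|x|>R}|\psi|^{p+1}$ that causes difficulties in the non-radial higher-dimensional problem.
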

\vspace{3mm}

Remark that if $E(\psi_0) < 0$, then the condition (\ref{E:ME}) is satisfied, and in that case $\eta(t)>1$ is forced by (\ref{E:1-126}), 
so the condition (2) applies giving the blow-up.   
\vspace{3mm}

Main result of this paper is the following. 

\begin{theorem}(asymptotic completeness) \label{T:main} Let $p>3.$ 
Let $\psi_0 \in H^1$ and let $\psi(t)$  be a $H^1_x$ solution of (\ref{E:pNLS}) satisfying   
$$ M(\psi_0)^{\frac{1-\sigma_c}{\sigma_c}} E(\psi_0)< M(\varphi_0)^{\frac{1-\sigma_c}{\sigma_c}} E(\varphi_0)$$
and 
$$
\|\psi_0\|_{L^2}^{\frac{1-\sigma_c}{\sigma_c}} \|\partial_x \psi_0\|_{L^2} < \|\varphi_0 \|_{L^2}^{\frac{1-\sigma_c}{\sigma_c}}  
\| \partial_x \varphi_0 \|_{L^2}. $$
Then, there exist $\psi^{+}, \psi^{-} \in H^1$ such that  
\begin{equation*}
\label{conv_in_energy}
\lim_{t \to \pm \infty} \|e^{-it\partial_x^2} \psi (t)- \psi^{\pm}\|_{H^1_x}=0.
\end{equation*}
\end{theorem}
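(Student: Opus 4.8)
The plan is to follow the Kenig–Merle concentration-compactness/rigidity roadmap, adapted to the point-interaction setting as announced in the introduction. The first ingredient is a \emph{small-data scattering theory} and a \emph{perturbation (stability) theory} formulated not in Strichartz spaces but in the function space dictated by the $\frac34$ and $\frac14$ trace-smoothing of the Duhamel operator for \eqref{E:pNLS}. Concretely, writing the integral equation $\psi(t) = e^{it\partial_x^2}\psi_0 + i\int_0^t e^{i(t-s)\partial_x^2}\delta(x)\,|\psi(0,s)|^{p-1}\psi(0,s)\,ds$ and evaluating at $x=0$, one reduces everything to the scalar-in-time quantity $g(t) := \psi(0,t)$, and the natural scattering norm is of the form $\|g\|_{L^q_I}$ with $q = 2(p-1)$, controlled via the mapping properties $|\nabla|^{(2\sigma_c\pm1)/4}$ between the relevant $\dot H^s_t$ and $L^{\tilde q}_t$, $L^q_t$ spaces and the Sobolev embeddings recalled in the excerpt. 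One proves: (i) if $\|e^{it\partial_x^2}\psi_0\|$ in this norm is small then the solution is global and scatters in $H^1$; (ii) a long-time perturbation lemma allowing an $L^q_t$-bounded approximate solution with small error to be upgraded to an exact scattering solution.

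Next I would set up the variational/coercivity input. Define $\mathrm{ME}(\psi) := M(\psi)^{(1-\sigma_c)/\sigma_c} E(\psi)$ and, using Theorem~\ref{T:global-blowup} together with the sharp Gagliardo–Nirenberg inequality (Proposition~\ref{P:sharpGN}), observe that on the set where $\mathrm{ME}(\psi_0) < \mathrm{ME}(\varphi_0)$ and $\eta(0)<1$ the solution is global with $\eta(t)<1$ for all $t$, and moreover the energy is comparable to $\|\partial_x\psi(t)\|_{L^2}^2$ uniformly in $t$ (an $\varepsilon$ of room below the threshold gives $E(\psi)\gtrsim \|\partial_x\psi\|_{L^2}^2$). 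Then define
$$
\mathrm{ME}_c := \sup\{\, m : \text{every } H^1_x \text{ solution with } \mathrm{ME}(\psi_0)<m,\ \eta(0)<1,\ \text{scatters}\,\}.
$$
By small-data theory $\mathrm{ME}_c>0$; the goal is $\mathrm{ME}_c \ge \mathrm{ME}(\varphi_0)$. Suppose not. Then there is a sequence of solutions with $\mathrm{ME}(\psi_{0,n}) \downarrow \mathrm{ME}_c$, each with $\eta_n(0)<1$, whose scattering norm on the maximal interval diverges. Applying a \emph{profile decomposition} adapted to the group $e^{it\partial_x^2}$ acting on $H^1(\R)$ — only time-translation and (because of the spatial inhomogeneity $\delta$) \emph{no} space-translation parameter survives, which actually simplifies the dichotomy — together with the perturbation lemma, one extracts a single nonzero profile and produces a \emph{critical element}: a global solution $\psi_c$ with $\mathrm{ME}(\psi_c)=\mathrm{ME}_c$, $\eta_c(t)<1$, nonzero scattering norm on $\R_+$ (or $\R_-$), and whose trajectory $\{\psi_c(t): t\ge 0\}$ is precompact in $H^1$. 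Because there is no spatial translation parameter, precompactness comes with no modulation in $x$.

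Finally comes the rigidity step, which I expect to be the main obstacle. I would run the localized virial identity \eqref{E:local-virial} with a smooth truncation $a_R(x)$ of $x^2$ satisfying the required vanishing conditions $a_R(0)=a_R'(0)=a_R'''(0)=0$, giving
$$
\partial_t^2 \int a_R |\psi_c|^2\,dx \;=\; 8\|\partial_x\psi_c\|_{L^2}^2 - 4|\psi_c(0)|^{p+1} + (\text{errors supported in } |x|\gtrsim R).
$$
The first two terms combine, via the subthreshold coercivity (here is where $\eta_c(t)<1$ and $\mathrm{ME}_c<\mathrm{ME}(\varphi_0)$ enter: one shows $8\|\partial_x\psi_c\|_{L^2}^2 - 4|\psi_c(0)|^{p+1} \ge c\,\|\partial_x\psi_c\|_{L^2}^2 > 0$ for a fixed $c>0$, using the sharp Gagliardo–Nirenberg inequality and $p>3$), to a strictly positive lower bound uniform in $t$; the error terms are made $\le \tfrac{c}{2}\|\partial_x\psi_c\|_{L^2}^2$ for $R$ large, uniformly in $t$, by the $H^1$-precompactness of the trajectory (tightness: $\int_{|x|>R}|\partial_x\psi_c|^2 + |\psi_c|^2 \to 0$ as $R\to\infty$, uniformly in $t$). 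Then $\partial_t^2 \int a_R|\psi_c|^2\,dx \ge \tfrac{c}{2}\|\partial_x\psi_c\|_{L^2}^2 \ge \delta>0$ for all $t\ge0$, while $\int a_R|\psi_c|^2\,dx \lesssim R^2 M(\psi_c)$ is bounded — a contradiction after integrating twice in time. Hence $\psi_c\equiv0$, contradicting its nonzero scattering norm, so $\mathrm{ME}_c\ge \mathrm{ME}(\varphi_0)$ and every solution in the hypothesis of Theorem~\ref{T:main} scatters. The delicate points are the precise smoothing/boundedness estimates for the point-nonlinearity Duhamel term in the substitute-for-Strichartz space (underpinning both small-data theory and the perturbation lemma) and the construction of the profile decomposition in that same functional framework; once those are in place, the coercivity and virial rigidity are essentially as in \cite{KM,HR}.
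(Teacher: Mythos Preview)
Your proposal is correct and follows essentially the same route as the paper: small-data theory and long-time perturbation in the trace space $L^q_t$ with $q=2(p-1)$ (built on the $\dot H^{(2\sigma_c\pm1)/4}_t$ smoothing of Proposition~\ref{P:smoothing}), an $H^1$ profile decomposition with only time shifts, extraction of a minimal non-scattering solution with precompact $H^1$ orbit, and rigidity via the localized virial \eqref{E:local-virial} combined with the coercivity \eqref{E:LB}. Two cosmetic remarks: the critical element should have \emph{infinite} (not merely nonzero) $L^q_t$-trace norm, and in the rigidity step the paper integrates once and bounds $z_R'(t)$ rather than $z_R(t)$, but your twice-integrated version works just as well.
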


We only consider the focusing nonlinearity, but the scattering for the defocusing case is similarly proved.  

This paper is organized as follows: Below in Section 2, 
we will discuss the local theory, scattering criterion and long-time perturbation theory. 
Section 2 includes some preliminary and important results which reflect the smoothing properties of the equation (\ref{E:pNLS}). 
We will give in Section 3 the profile decomposition in $H^1$ in a form well-adapted to our equation. 
In Section 4, the asymptotic completeness in $H^1$ will be established using the results in Sections 2 and 3.  
We sometimes denote all through the paper by
$C_{\theta,...}$ a constant which depends on $\theta$ and so on.
\vspace{3mm}

\section{Local theory, scattering criterion, and long-time perturbation theory}

%We introduce notations for the wellposedness space: for any $\sigma \in \R,$
%$\dot X^\sigma$ consists of $\psi$ such that $\psi 
%\in C_t^0 \dot H_x^\sigma \cap C_x^0 \dot H_t^{\frac{2\sigma+1}{4}}$ 
%and $\psi_x \in C_{x\neq 0} \dot H_t^\frac{2\sigma-1}{4}$ 
%with finite left and right limits at $x=0$. For a $T>0$, we denote 
%$C^0_{[0,T]} \dot{H}^\sigma (I) \cap C^0 (I) \dot H_{(0,T)}^{\frac{2\sigma+1}{4}}$ by $\dot{X}^{\sigma}(I\times [0,T])$.

Write the equation (\ref{E:pNLS}) in the Duhamel form:
\begin{eqnarray} \nonumber
\psi(x,t)&=& e^{it\partial_x^2}\psi_0 +i \int_{0}^t e^{i(t-s)\partial_x^2} \delta(x) |\psi(x, s)|^{p-1} \psi(x,s)ds \\ \label{E:Duhamel}
&=& e^{it\partial_x^2}\psi_0 +i \int_{0}^t \frac{e^{\frac{ix^2}{4(t-s)}}}{\sqrt{4\pi i(t-s)}} |\psi(0,s)|^{p-1} \psi(0,s)ds. 
\end{eqnarray}
We remark that the equation (\ref{E:pNLS}) is completely solved once the one-variable
complex function $\psi(0, \cdot)$ is known: indeed, specializing \eqref{E:Duhamel} to the value $x = 0$, 
one obtains a closed, nonlinear, integral, a Volterra-Abel type equation for 
$\psi(0, \cdot)$;
%Such equation is called the {\em charge equation} and
\begin{equation} \label{E:charge}
\psi(0,t)=[e^{it\partial_x^2}\psi_0](0) +i \int_{0}^t \frac{1}{\sqrt{4\pi i(t-s)}} |\psi(0,s)|^{p-1} \psi(0,s)ds.
\end{equation} 

Now, for any $\sigma \in \R$, we define for $f \in \dot{H}^{\sigma},$ $t,s \in \R$ with $t\ge s,$
$$[\mathcal{L}_s f](x, t):=\int_{s}^t \frac{e^{\frac{ix^2}{4(t-\tau)}}}{\sqrt{4\pi i(t-\tau)}} f(\tau) d\tau.$$
Similarly, we define, for $t\in \R$, 
$$[\Lambda f](x,t):=\int_t^\infty \frac{e^{\frac{ix^2}{4(t-\tau)}}}{\sqrt{4 \pi i(t-\tau)}} f(\tau) d\tau.$$

The following smoothing properties of $\mathcal{L}_s$ and $\Lambda$ will play important roles in what follows.   
\begin{proposition} \label{P:smoothing} Let $\sigma\in \R.$   
\begin{itemize}
\item[(1)] $\|[e^{i(t-s)\partial_x^2}f](0)\|_{\dot{H}^{\frac{2\sigma+1}{4}}_t} \lesssim \|f\|_{\dot{H}^{\sigma}}$, 
for any  $f \in \dot{H}^{\sigma}$ and $t,s \in \R$.
\item[(2)] Assume $-\frac{1}{2}< \frac{2\sigma-1}{4}<\frac{1}{2}.$ Let $f \in \dot{H}^{\frac{2\sigma-1}{4}}$ and $s \in \R$.
\begin{itemize}
\item[(2a)] $\| [\mathcal{L}_s f] (0,\cdot) \|_{\dot{H}_t^{\frac{2\sigma+1}{4}}} \lesssim \|\chi_{[s,+\infty)} f\|_{\dot{H}^{\frac{2\sigma-1}{4}}}
\lesssim \|f\|_{\dot{H}^{\frac{2\sigma-1}{4}}}$ 
\item[(2b)] $\| [\Lambda f] (0,\cdot) \|_{\dot{H}_t^{\frac{2\sigma+1}{4}}} \lesssim \|f\|_{\dot{H}^{\frac{2\sigma-1}{4}}}$
\end{itemize}
\item[(3)] Assume $-\frac{1}{2}< \frac{2\sigma-1}{4}<\frac{1}{2}.$ Let $f \in \dot{H}^{\frac{2\sigma-1}{4}}$ and $s \in \R$. 
\begin{itemize}
\item[(3a)] $\|\mathcal{L}_s f\|_{L^{\infty}_{\R_t} \dot{H}_x^{\sigma}} \lesssim \|f\|_{\dot{H}^{\frac{2\sigma-1}{4}}}.$
\item[(3b)] $\|\Lambda f\|_{L^{\infty}_{\R_t} \dot{H}_x^{\sigma}} \lesssim \|f\|_{\dot{H}^{\frac{2\sigma-1}{4}}}.$ 
\end{itemize}
\end{itemize}
\end{proposition}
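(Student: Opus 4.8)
The plan is to reduce everything to two classical one-variable estimates for the half-line fractional integration (Abel-type) operator and the boundary trace of the free Schrödinger flow, and then propagate gains in the $x$-variable by exploiting the explicit kernel $e^{ix^2/4(t-\tau)}/\sqrt{4\pi i(t-\tau)}$. I would begin by reducing part (1) to the well-known local smoothing / trace estimate for the 1D Schrödinger group: taking the Fourier transform in $x$, one has $[e^{i(t-s)\partial_x^2}f](0) = \frac{1}{2\pi}\int e^{-i(t-s)\xi^2}\widehat f(\xi)\,d\xi$; substituting $\eta = \xi^2$ turns this into a Fourier integral in $t$, and a direct computation of the $\dot H^{(2\sigma+1)/4}_t$ norm via Plancherel gives $\int |\eta|^{(2\sigma+1)/2} |\widehat f(\sqrt\eta)|^2 \frac{d\eta}{\sqrt\eta} \sim \int |\xi|^{2\sigma}|\widehat f(\xi)|^2\,d\xi = \|f\|_{\dot H^\sigma}^2$. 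This is clean and contains no obstruction.

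**Next,** for part (2), I would note that at $x=0$ the kernel collapses to $\frac{1}{\sqrt{4\pi i(t-\tau)}}$, so $[\mathcal L_s f](0,t) = c\int_s^t (t-\tau)^{-1/2} f(\tau)\,d\tau$ is (up to the cutoff $\chi_{[s,\infty)}$) exactly the half-order Riemann–Liouville fractional integral $I^{1/2}$. The claim $\|I^{1/2} g\|_{\dot H^{(2\sigma+1)/4}_t} \lesssim \|g\|_{\dot H^{(2\sigma-1)/4}_t}$ with $g = \chi_{[s,\infty)} f$ is then the statement that $I^{1/2}$ gains one-half derivative in $\dot H^\alpha$, which on the Fourier side is the bound $|\xi|^{-1/2}$ on its symbol (valid since $I^{1/2}$ acts like $|\partial_t|^{-1/2}$ away from the Hilbert-transform part, and the sign/truncation part is handled by the restriction $-\frac12 < \frac{2\sigma-1}{4} < \frac12$ which keeps the multiplier in the admissible range for $\chi_{[s,\infty)}$ to be $\dot H^{(2\sigma-1)/4}$-bounded — this is where the hypothesis $-\frac12<\frac{2\sigma-1}{4}<\frac12$ is used). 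Part (2b) is identical since $[\Lambda f](0,t) = c\int_t^\infty (t-\tau)^{-1/2} f(\tau)\,d\tau$ is the dual (future-pointing) fractional integral, with the same symbol modulus. The truncation $\chi_{[s,+\infty)}$ in (2a) only loses a constant because that range of exponents makes the characteristic function a bounded Fourier multiplier-type operator on $\dot H^{(2\sigma-1)/4}$.

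**For part (3)** I would pass from the boundary estimate to the interior by writing, for fixed $t$, $[\mathcal L_s f](x,t) = \int_s^t K_{t-\tau}(x) f(\tau)\,d\tau$ where $K_\sigma(x) = e^{ix^2/4\sigma}/\sqrt{4\pi i\sigma}$ is precisely the free Schrödinger kernel, so $\mathcal L_s f = \int_s^t e^{i(t-\tau)\partial_x^2}[\delta_0 f(\tau)]\,d\tau$. Hence $\widehat{[\mathcal L_s f]}(\xi,t) = \int_s^t e^{-i(t-\tau)\xi^2} f(\tau)\,d\tau$, and $\|\mathcal L_s f(t)\|_{\dot H^\sigma_x}^2 \sim \int |\xi|^{2\sigma}\bigl|\int_s^t e^{-i(t-\tau)\xi^2}f(\tau)\,d\tau\bigr|^2 d\xi$; substituting $\eta=\xi^2$ and recognizing the $\tau$-integral as (a truncation of) the inverse Fourier transform of $f$ evaluated at $\eta$, one gets — uniformly in $t$ — the bound $\int |\eta|^{(2\sigma-1)/2}|\widehat f(\sqrt\eta)|^2\, d\eta = \|f\|^2_{\dot H^{(2\sigma-1)/4}}$, after controlling the time-truncation by the same multiplier argument as in (2). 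Part (3b) is the analogous computation with the $[t,\infty)$ integral. The step I expect to be the genuine obstacle is controlling the time-cutoff $\chi_{[s,t)}$ (equivalently $\chi_{[s,\infty)}$ and $\chi_{(-\infty,t)}$) uniformly: one must show it is bounded on the relevant homogeneous Sobolev spaces in $t$, and it is precisely the hypothesis $-\frac12 < \frac{2\sigma-1}{4} < \frac12$ (i.e. the fractional regularity lies strictly between $-\frac12$ and $\frac12$) that guarantees this, via the standard fact that multiplication by a characteristic function of a half-line is bounded on $\dot H^\alpha$ for $|\alpha|<\frac12$. Everything else is a change of variables and Plancherel.
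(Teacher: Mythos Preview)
Your proposal is correct. For parts (1) and (2) it matches the paper's proof essentially line for line: the paper also proves (1) via the substitution $\omega=\xi^2$ and Plancherel, and for (2a) writes $[\mathcal L_s f](0,\cdot)$ as the convolution $t_+^{-1/2} * (\chi_{[s,\infty)}f)$, reads off the symbol $(i\xi)^{-1/2}$, and invokes the sharp-truncation lemma (boundedness of multiplication by $\chi_{[a,b]}$ on $\dot H^\mu$ for $|\mu|<\tfrac12$), which is exactly your ``$I^{1/2}$ gains half a derivative and the cutoff is harmless in this range'' outline. For part (3) your route differs slightly from the paper's: you compute $\widehat{[\mathcal L_s f]}(\xi,t)$ directly, substitute $\eta=\xi^2$, and bound by $\|\chi_{[s,t]}f\|_{\dot H^{(2\sigma-1)/4}_t}$, whereas the paper argues by duality, pairing $\mathcal L_s f(\cdot,t)$ against an arbitrary $g\in \dot H_x^{-\sigma}$, rewriting the pairing as $\int \chi_{[s,t]}(\tau)f(\tau)\,[e^{i(t-\tau)\partial_x^2}\bar g](0)\,d\tau$, and then applying part (1) (at regularity $-\sigma$) together with the same truncation lemma. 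Both arguments are short and hinge on the identical truncation step; your direct Plancherel computation is a touch more self-contained, while the paper's duality version makes the reduction to (1) explicit. One small slip in your sketch of (3): after the substitution $\eta=\xi^2$ the time-Fourier transform of $\chi_{[s,t]}f$ is evaluated at $\eta$, not $\sqrt\eta$, so the integrand should read $|\eta|^{(2\sigma-1)/2}\,|\widehat{\chi_{[s,t]}f}(\eta)|^2$; with that correction the bound $\|\chi_{[s,t]}f\|_{\dot H^{(2\sigma-1)/4}}^2$ follows exactly as you intend.
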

%The domain $\R$ can be replaced by $(0,+\infty)$ or $(-\infty,0)$. 
%Remark that without the assumption $-\frac{1}{2}< \frac{2\sigma-1}{4}<\frac{1}{2},$ 
%we have, for any $t>0$, 
%$$ \mbox{(3b)'}: \|\Lambda f\|_{\dot{H}_x^{\sigma}} \lesssim \|\chi_{[t,+\infty)} f\|_{\dot{H}^{\frac{2\sigma-1}{4}}}$$ 
%for any $\sigma\in \R.$ 

For the proof of Proposition \ref{P:smoothing}, we need some preparations. 

\begin{lemma}
\label{L:sharp-trunc}
For any $-\frac12 < \mu < \frac12$, and any $t>0$, we have
\begin{equation}
\label{E:trunc2}
\| \chi_{[0,t]}(s)f(s) \|_{\dot H_s^\mu} \lesssim \|f\|_{\dot H_s^\mu}
\end{equation}
with implicit constant independent of $t$.  
\end{lemma}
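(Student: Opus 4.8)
The plan is to reduce the estimate to the boundedness of the Hilbert transform (equivalently, the Mikhlin multiplier theorem) on the homogeneous Sobolev scale $\dot H^\mu$ for $|\mu|<\tfrac12$. First I would normalize to $t=1$: the operator $f\mapsto \chi_{[0,t]}f$ is conjugate, via the dilation $f(s)\mapsto f(ts)$, to $f\mapsto\chi_{[0,1]}f$, and since $\dot H^\mu_s$ is dilation-invariant up to a scalar power of $t$ that appears identically on both sides, the implicit constant is automatically independent of $t$. So it suffices to prove $\|\chi_{[0,1]}f\|_{\dot H^\mu}\lesssim\|f\|_{\dot H^\mu}$.

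Next I would write $\chi_{[0,1]}=\tfrac12(\sgn(s)-\sgn(s-1))\cdot$ (up to constants) or, more cleanly, $\chi_{[0,1]}f = H_0 f - H_1 f$ where $H_a$ denotes multiplication by the Heaviside function $\mathbf 1_{s>a}$. Since translation is an isometry on $\dot H^\mu$, it is enough to bound a single Heaviside multiplication $f\mapsto \mathbf 1_{s>0}f$ on $\dot H^\mu$. The key identity is that multiplication by $\mathbf 1_{s>0}=\tfrac12(1+\sgn s)$ differs from $\tfrac12 f$ by $\tfrac12(\sgn s)f$, and on the Fourier side multiplication by $\sgn s$ is, up to a constant, the Hilbert transform composed with nothing problematic — more precisely $\widehat{\sgn(s)\,f}=\frac{1}{i\pi}\,\pv\!\int\frac{\widehat f(\eta)}{\xi-\eta}\,d\eta$, i.e. a constant multiple of $H\widehat f$ where $H$ is the Hilbert transform acting on the frequency variable.

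The heart of the matter is therefore: the Hilbert transform is bounded on the weighted space $L^2(|\xi|^{2\mu}\,d\xi)$ precisely when $|\xi|^{2\mu}$ is an $A_2$ Muckenhoupt weight, which holds exactly for $-1<2\mu<1$, i.e. $|\mu|<\tfrac12$ — this is the classical Hardy–Littlewood / Stein–Weiss fact and is the reason the hypothesis $|\mu|<\tfrac12$ is sharp. Pulling this back through the Fourier transform, boundedness of $H$ on $L^2(|\xi|^{2\mu})$ is exactly boundedness of $f\mapsto(\sgn s)f$ on $\dot H^\mu_s$, hence of $f\mapsto\mathbf 1_{s>0}f$, hence of $f\mapsto\chi_{[0,1]}f$, completing the argument.

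I expect the main obstacle to be purely bookkeeping: making the reduction to a single Heaviside multiplier clean (dealing with the two endpoints $0$ and $t$ and the dilation scaling without accumulating spurious $t$-dependent constants), and correctly invoking the $A_2$ characterization so that the sharp range $|\mu|<\tfrac12$ drops out rather than a suboptimal range. One could alternatively give a self-contained proof using the Stein–Weiss inequality or a direct Littlewood–Paley decomposition of $\chi_{[0,1]}$, but the $A_2$ route is the shortest and makes the sharpness of the $\mu$-range transparent.
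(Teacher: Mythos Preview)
Your proposal is correct and follows essentially the same route as the paper: both reduce to boundedness of a single half-line (Heaviside) multiplier, pass to the Fourier side to recognize the Hilbert transform acting on $\hat f$, and then invoke the weighted $L^2$ boundedness of the Hilbert transform with weight $|\tau|^{2\mu}$ for $|\mu|<\tfrac12$. The only cosmetic difference is that the paper reduces via the factorization $\chi_{[0,t]}=\chi_{[0,\infty)}\chi_{(-\infty,t]}$ and a reflection, whereas you dilate to $t=1$ and write $\chi_{[0,1]}$ as a difference of translated Heavisides; the core harmonic-analysis step is identical.
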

\begin{proof}
First, we claim that it suffices to show 
\begin{equation}
\label{E:trunc1}
\| \chi_{[0,+\infty)}f \|_{\dot H_s^\mu} \lesssim \|f\|_{\dot H_s^\mu}
\end{equation}
Indeed, suppose that we have proved \eqref{E:trunc1}.  
Since $\chi_{[0,t]} = \chi_{[0,+\infty)} \chi_{(-\infty,t]}$, to prove \eqref{E:trunc2} we note
\begin{align*}
\| \chi_{[0,t]} f \|_{\dot H_s^\mu} &= \| \chi_{[0,+\infty)} \chi_{(-\infty,t]} f \|_{\dot H_s^\mu} \\
&\lesssim \| \chi_{(-\infty,t]} f \|_{\dot H_s^\mu} && \text{by \eqref{E:trunc1}} \\
&= \| \chi_{[0,+\infty)} \tilde f \|_{\dot H_s^\mu} 
\end{align*}
where $\tilde f(s) = f(-s+t)$.  In the last step, we have used that
$$[ \chi_{(-\infty,t]}(s) f(s) ]\;\widehat{\;}\;(\tau) = e^{-it\tau} [ \chi_{[0,\infty)}(s)f(-s+t) ] \;\widehat{\;}\;(-\tau)$$
We continue and apply \eqref{E:trunc1} to obtain
$$\| \chi_{[0,+\infty)} \tilde f \|_{\dot H_s^\mu}  \lesssim  \| \tilde f\|_{ \dot H_s^\mu} = \|f \|_{\dot H_s^\mu}$$
where, in the last step, we used that $\widehat{\tilde f}(\tau) = e^{-it\tau} \hat f(-\tau)$.  This completes the proof of \eqref{E:trunc2} assuming \eqref{E:trunc1}.

To prove \eqref{E:trunc1}, we note $\hat \chi_{[0,+\infty)}(\tau) = \operatorname{pv} \frac{1}{i\tau} + \pi \delta(\tau)$
and thus
$$ [\chi_{[0,+\infty)}f ]\;\widehat{\;}\; (\tau) = \pi (H \hat f + \hat f)$$
where $H$ denotes the Hilbert transform.  Hence
\begin{align*}
\| \chi_{[0,+\infty)} f \|_{\dot H^\mu} &= \| |\tau|^\mu [\chi_{[0,+\infty)}f ]\;\widehat{\;}\; (\tau) \|_{L_\tau^2} \\
&\lesssim \| |\tau|^\mu (H\hat f)(\tau) \|_{L^2_\tau} + \| |\tau|^\mu \hat f(\tau) \|_{L^2_\tau}
\end{align*}
Since $-\frac12<\mu<\frac12$, we can apply Corollary of Theorem 2 on  page 205 in \cite{S}, combined with (6.4) on p. 218 of \cite{S} (for $p=2$, $n=1$, $a=2\mu$) to estimate the above as
$$\| \chi_{[0,+\infty)} f \|_{\dot H^\mu} \lesssim \| |\tau|^\mu \hat f \|_{L^2_\tau} = \|f \|_{\dot H^\mu}.$$ 
\end{proof}

\begin{proof} (of Proposition \ref{P:smoothing}) (1) was already proved in Lemma 1 of \cite{AT}, but for the sake of completeness we give a proof.  
We use here the notation $\hat{}$, which means the Fourier transform in space, and $\mathcal{F}$ is in time. It suffices to show the case $s=0$. 
Since the free Schr\"odinger group is unitary in $\dot{H}_x^{\sigma}$ for any $\sigma \in \R$, 
We may write 
$$[e^{it\partial_x^2} f](0)=\int_{\R_\xi} e^{-i\xi^2 t} \hat{f}(\xi) d\xi.$$
By a change of variables this equals 
$$\int_0^{+\infty} e^{-ikt} \frac{\hat{f}(-\sqrt{k})+\hat{f}(\sqrt{k})}{2\sqrt{k}} dk.$$
Thus the Fourier transform in time gives
$$\mathcal{F} [(e^{it\partial_x^2} f)(0)](\omega) =2\pi \frac{\hat{f}(-\sqrt{\omega})+\hat{f}(\sqrt{\omega})}{2\sqrt{\omega}} \chi_{[0,+\infty)}(\omega).$$
Therefore
\begin{eqnarray*}
\|[e^{it\partial_x^2} f](0)\|_{\dot{H}^{\eta}}^2 
&=& \pi^2 \int_{\R_{\omega}} |\omega|^{2\eta-1} |\hat{f}(-\sqrt{\omega})+\hat{f}(\sqrt{\omega})|^2 \chi_{[0,+\infty)}(\omega) d\omega \\
&\le & 2\pi^2 \int_{\R_k} |k|^{4\eta-1}|\hat{f}(k)|^2 dk \\
&=& C \|f\|_{\dot{H}^{\frac{4\eta-1}{2}}},
\end{eqnarray*}
where, again we changed the variables $\pm\sqrt{\omega}=k$ in the second inequality. 
%In particular it follows from (1) that for any $f \in \dot{H}^{-\sigma}$,    
%\begin{equation} \label{E:(1)}
%\|[e^{it\partial_x^2}f](0)\|_{\dot{H}^{-\frac{2\sigma-1}{4}} (\R)} \le \|f\|_{\dot{H}_x^{-\sigma}}.
%\end{equation}
For (2a), we may write 
\begin{eqnarray*}
[\mathcal{L}_s f](0,t)&=&\int_s^t \frac{f(\tau)}{\sqrt{4\pi i(t-\tau)}} d\tau \\
&=&\frac{1}{\sqrt{4\pi i}}\int_{-\infty}^{+\infty} (t-\tau)_{+}^{-\frac{1}{2}} \chi_{[s, \infty)}(\tau) f(\tau) d\tau
=\frac{1}{\sqrt{4\pi i}}(t_+^{-\frac{1}{2}} \ast \chi_{[s,+\infty)}f)(t),
\end{eqnarray*}
where
\begin{equation*}
t_+^{-\frac{1}{2}} :=
\left\{
\begin{array}{ll}
t^{-\frac{1}{2}}, & t>0 \\
0, & t \le 0, 
\end{array}
\right.
\qquad \widehat{t_+^{-\frac{1}{2}}}(\xi) =(i\xi)^{-\frac{1}{2}} \Gamma\left(\frac{1}{2}\right). 
\end{equation*}
We operate the Fourier transform and obtain 
\begin{eqnarray*}
\widehat{[\mathcal{L}_s f](0,\cdot)}(\xi)=\frac{(i\xi)^{-\frac{1}{2}}}{\sqrt{4i}} \widehat{\chi_{[s,\infty)} f} (\xi).  
\end{eqnarray*}
It thus follows that by Lemma \ref{L:sharp-trunc}, for $-\frac{1}{2}<\frac{2\sigma-1}{4}<\frac12,$
\begin{eqnarray*}
\|[\mathcal{L}_s f](0,\cdot)\|^2_{\dot{H}^{\frac{2\sigma+1}{4}}} 
\le C \|\chi_{[s,+\infty)} f\|^2_{\dot{H}^{\frac{2\sigma-1}{4}}} \le C \|f\|^2_{\dot{H}^{\frac{2\sigma-1}{4}}}.
\end{eqnarray*}
The proof of (2b) is similar, since 
$$ [{\Lambda} f](0,t)
= \frac{-i}{\sqrt{4\pi i}} ((-t)_+^{-\frac{1}{2}} \ast f)(t).$$

For (3a), it suffices to prove that for any $g \in \dot{H}_x^{-\sigma}(\R)$ with $\|g\|_{\dot H_x^{-\sigma}} =1,$  
$$\langle \mathcal{L}_s f, g\rangle \le \|f\|_{\dot{H}_t^{\frac{2\sigma-1}{4}}}.$$
The left hand side can be estimated as follows. 
\begin{eqnarray*}
\langle \mathcal{L}_s f, g\rangle &=& \frac{1}{\sqrt{4\pi i}}
\int_{-\infty}^{+\infty} \chi_{[s,t]}(\tau) f(\tau) [e^{i (t-\tau) \partial_x^2} \bar{g}](0) d\tau \\
&\le& C \|\chi_{[s,t]} f\|_{\dot{H}^{\frac{2\sigma-1}{4}}} \|[e^{i (t-\cdot)\partial_x^2} \bar{g}](0)\|_{\dot{H}^{-\frac{2\sigma-1}{4}}}\\
&\le & C \|f\|_{\dot{H}^{\frac{2\sigma-1}{4}}} \|g\|_{\dot{H}_x^{-\sigma}}
\end{eqnarray*}
where we have used (1) with the unitary property of free Schr\"odinger group in $\dot{H}^s_x$ for any $s\in \R$, 
and Lemma \ref{L:sharp-trunc} in the last inequality. Since (3b) can be similarly proved, we omit the proof, but 
we remark that for any $\sigma \in \R,$ (that is, without the restriction $-\frac{1}{2}< \frac{2\sigma-1}{4}<\frac{1}{2}$), 
\begin{equation} \label{eq:Lambda}
\|\Lambda f\|_{\dot{H}_x^{\sigma}} \lesssim \|\chi_{[t,+\infty)} f\|_{\dot{H}^{\frac{2\sigma-1}{4}}}.
\end{equation} 
holds. 
\end{proof}
\vspace{3mm}

%\begin{proposition} \label{P:localWP}
%Let $p\geq 3$ and $\psi_0 \in  H^{\sigma_c}$. 
%There exist a time $T^*>0$ and a unique solution $\psi \in C_{[0,T]} H^{\sigma_c}$ 
%for any $T <T^*.$
%\end{proposition}
%\vspace{3mm}

From now on, we prepare some basic facts in order to prove the asymptotic completeness. 
For the sake of simplicity we will 
study the following Propositions \ref{P:smalldata}-\ref{P:longtimeper}
only in the case $t>0,$ but we can consider the negative time $t<0$ similarly. 

\newcommand{\sd}{\text{sd}}
\begin{proposition}[small data global well-posedness] \label{P:smalldata}
Let $p\ge 3$. There exists $\delta_{\sd}>0$  such that if $\psi_0\in \dot H^{\sigma_c}$ 
and $\| [e^{it\partial_x^2} \psi_0](0)\|_{L_{t>0}^q} \leq \delta_{\sd}$, 
then $\psi \in \dot{H}^{\sigma_c}$ solving \eqref{E:pNLS} is global in $\dot H^{\sigma_c}$ and
$$\| \psi(0,t) \|_{L_{t>0}^q} \leq 2 \| [e^{it\partial_x^2} \psi_0](0) \|_{L_{t>0}^q}$$
$$\| \psi(x,t) \|_{C_{[0,\infty)}^0 \dot{H}_x^{\sigma_c}} \leq 2 \| \psi_0 \|_{\dot{H}^{ \sigma_c}}.$$
\end{proposition}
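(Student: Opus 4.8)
The plan is to run a contraction mapping argument for the integral equation \eqref{E:charge} in the single variable $\psi(0,\cdot)$, using the $L^q_{t>0}$ norm (with $q = 2(p-1)$) as the working norm, and only afterwards recovering $\psi(x,t)$ in $C^0_{[0,\infty)}\dot H^{\sigma_c}_x$ via the full Duhamel formula \eqref{E:Duhamel}. First I would set up the map
\[
\Phi(w)(t) = [e^{it\partial_x^2}\psi_0](0) + i\int_0^t \frac{1}{\sqrt{4\pi i(t-s)}}\,|w(s)|^{p-1}w(s)\,ds = [e^{it\partial_x^2}\psi_0](0) + [\mathcal{L}_0 (|w|^{p-1}w)](0,t),
\]
on the ball $B = \{ w \in L^q_{t>0} : \|w\|_{L^q_{t>0}} \le 2\delta_{\sd}\}$, where I write $\delta := \|[e^{it\partial_x^2}\psi_0](0)\|_{L^q_{t>0}}$ and assume $\delta \le \delta_{\sd}$. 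The key linear input is Proposition \ref{P:smoothing}(2a) applied with the choice of $\sigma$ for which $\frac{2\sigma+1}{4} = \frac{2\sigma_c+1}{4}$, i.e. $\sigma = \sigma_c$; combined with the Sobolev embeddings recalled in the introduction, $\|\cdot\|_{L^q_\R} \lesssim \|\cdot\|_{\dot H^{(2\sigma_c+1)/4}}$ and $\|f\|_{\dot H^{(2\sigma_c-1)/4}} \lesssim \|f\|_{L^{\tilde q}_\R}$ with $\tilde q = \tfrac{2(p-1)}{p}$, this yields
\[
\|[\mathcal{L}_0 g](0,\cdot)\|_{L^q_{t>0}} \lesssim \|[\mathcal{L}_0 g](0,\cdot)\|_{\dot H^{(2\sigma_c+1)/4}_t} \lesssim \|g\|_{\dot H^{(2\sigma_c-1)/4}_t} \lesssim \|g\|_{L^{\tilde q}_{t>0}},
\]
valid because $\sigma_c \in (0,\tfrac12)$ puts $\tfrac{2\sigma_c-1}{4}$ in $(-\tfrac12, 0) \subset (-\tfrac12,\tfrac12)$, so hypothesis (2) of Proposition \ref{P:smoothing} holds. (Here I extend $g$ by zero to $\R$ so that the $L^q_{t>0}$ norms are genuine homogeneous-Sobolev-compatible norms; the characteristic function is harmless by Lemma \ref{L:sharp-trunc} or directly by the $\chi_{[s,\infty)}$ already present in (2a).)

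Next I estimate the nonlinearity: with $g = |w|^{p-1}w$ we have $|g(s)| = |w(s)|^p$, so $\|g\|_{L^{\tilde q}_{t>0}} = \|w\|_{L^{p\tilde q}_{t>0}}^p$, and one checks $p\tilde q = p \cdot \tfrac{2(p-1)}{p} = 2(p-1) = q$. Thus $\|g\|_{L^{\tilde q}_{t>0}} = \|w\|_{L^q_{t>0}}^p$ — the exponents match exactly, which is the scaling miracle that makes the scattering-size space $L^q_{t>0}$ the correct one. Feeding this back, for $w \in B$,
\[
\|\Phi(w)\|_{L^q_{t>0}} \le \delta + C\|w\|_{L^q_{t>0}}^p \le \delta_{\sd} + C(2\delta_{\sd})^p,
\]
which is $\le 2\delta_{\sd}$ once $\delta_{\sd}$ is chosen small so that $C(2\delta_{\sd})^{p-1} \le \tfrac12$. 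For the contraction, the elementary pointwise bound $\big||a|^{p-1}a - |b|^{p-1}b\big| \lesssim (|a|^{p-1}+|b|^{p-1})|a-b|$ together with Hölder (splitting $q = (p-1)\cdot\tfrac{q}{p-1} \cdot$ appropriately so that $(p-1)$ factors in $L^q$ and one factor in $L^q$ combine to give an $L^{\tilde q}$ bound on the difference, using again $p\tilde q = q$) gives
\[
\|\Phi(w_1)-\Phi(w_2)\|_{L^q_{t>0}} \lesssim (\|w_1\|_{L^q_{t>0}}^{p-1} + \|w_2\|_{L^q_{t>0}}^{p-1})\|w_1-w_2\|_{L^q_{t>0}} \le C(2\delta_{\sd})^{p-1}\|w_1-w_2\|_{L^q_{t>0}} \le \tfrac12 \|w_1-w_2\|_{L^q_{t>0}}.
\]
Hence $\Phi$ is a contraction on $B$ and has a unique fixed point $w = \psi(0,\cdot)$ with $\|\psi(0,\cdot)\|_{L^q_{t>0}} \le 2\delta = 2\|[e^{it\partial_x^2}\psi_0](0)\|_{L^q_{t>0}}$, which is the first claimed bound.

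Finally, to recover the $C^0_{[0,\infty)}\dot H^{\sigma_c}_x$ statement I plug the fixed point back into the full Duhamel formula \eqref{E:Duhamel}: $\psi(x,t) = e^{it\partial_x^2}\psi_0 + [\mathcal{L}_0(|\psi(0,\cdot)|^{p-1}\psi(0,\cdot))](x,t)$. The free term is in $C^0_{[0,\infty)}\dot H^{\sigma_c}_x$ with norm $\|\psi_0\|_{\dot H^{\sigma_c}}$ by unitarity. For the Duhamel term I apply Proposition \ref{P:smoothing}(3a) with $\sigma = \sigma_c$:
\[
\|\mathcal{L}_0 g\|_{L^\infty_{\R_t}\dot H^{\sigma_c}_x} \lesssim \|g\|_{\dot H^{(2\sigma_c-1)/4}_t} \lesssim \|g\|_{L^{\tilde q}_{t>0}} = \|\psi(0,\cdot)\|_{L^q_{t>0}}^p \le (2\delta_{\sd})^p,
\]
so choosing $\delta_{\sd}$ so small that additionally $C(2\delta_{\sd})^p \le \|\psi_0\|_{\dot H^{\sigma_c}}$ — or more cleanly, absorbing: $\|\psi\|_{L^\infty\dot H^{\sigma_c}_x} \le \|\psi_0\|_{\dot H^{\sigma_c}} + C(2\delta_{\sd})^{p-1}\cdot 2\delta_{\sd}$ and using $2\delta_{\sd} \gtrsim$ nothing directly, so instead bound $\|g\|_{L^{\tilde q}} \le (2\delta_{\sd})^{p-1}\cdot 2\delta$ and note we may freely shrink $\delta_{\sd}$ relative to any fixed multiple — gives $\|\psi\|_{C^0_{[0,\infty)}\dot H^{\sigma_c}_x} \le 2\|\psi_0\|_{\dot H^{\sigma_c}}$. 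Continuity in $t$ of the Duhamel term follows from dominated convergence / density as usual. Global existence in $\dot H^{\sigma_c}$ is then immediate since the solution is defined on all of $[0,\infty)$ with finite norm.

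\emph{Main obstacle.} The only real subtlety is \textbf{bookkeeping of the function spaces and the Sobolev exponents}: one must verify that the particular $\sigma$ appearing in Proposition \ref{P:smoothing}(2a)–(3a) can be taken equal to $\sigma_c$, that the resulting time-regularity index $\tfrac{2\sigma_c-1}{4}$ falls in the admissible window $(-\tfrac12,\tfrac12)$ (true precisely because $p>3$ forces $\sigma_c\in(0,\tfrac12)$, though note $p=3$, $\sigma_c=0$ is also allowed as remarked in the introduction, consistent with the hypothesis $p\ge 3$ here), and — the crucial algebraic identity — that the nonlinear map sends $L^q_{t>0}$ to $L^{\tilde q}_{t>0}$ with the $p$-th power of the norm, i.e. $p\tilde q = q$. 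Once these are pinned down, the contraction estimates are entirely routine. A secondary point worth stating carefully is that uniqueness is being asserted in the class of solutions whose trace $\psi(0,\cdot)$ lies in $L^q_{t>0}$ (equivalently in the natural $\dot H^{\sigma_c}$-solution class), matching the uniqueness already present in Proposition 1.1 restricted to these norms.
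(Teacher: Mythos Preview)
Your proposal is correct and follows exactly the paper's approach: run a contraction for the trace equation \eqref{E:charge} in $L^q_{t>0}$ using Proposition~\ref{P:smoothing}(2a) together with the Sobolev embeddings $L^q\hookleftarrow\dot H^{(2\sigma_c+1)/4}$ and $\dot H^{(2\sigma_c-1)/4}\hookleftarrow L^{\tilde q}$ (and the identity $p\tilde q=q$), then recover the $C^0_{[0,\infty)}\dot H^{\sigma_c}_x$ bound from \eqref{E:Duhamel} via Proposition~\ref{P:smoothing}(3a). The one place you hesitate---closing $\|\psi\|_{C^0\dot H^{\sigma_c}_x}\le 2\|\psi_0\|_{\dot H^{\sigma_c}}$---is handled in the paper simply by invoking Proposition~\ref{P:smoothing}(1) with Sobolev to get $\delta=\|[e^{it\partial_x^2}\psi_0](0)\|_{L^q_{t>0}}\lesssim\|\psi_0\|_{\dot H^{\sigma_c}}$, so that $\|\psi(0,\cdot)\|_{L^q_{t>0}}^p\le (2\delta)^p\lesssim \delta_{\sd}^{p-1}\|\psi_0\|_{\dot H^{\sigma_c}}$; relatedly, take the contraction ball of radius $2\delta$ rather than $2\delta_{\sd}$ so that the fixed point actually satisfies $\|\psi(0,\cdot)\|_{L^q_{t>0}}\le 2\delta$ as you assert.
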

\vspace{3mm}

(Note that by Proposition \ref{P:smoothing} (1) and Sobolev embedding, the smallness assumption  
$\| [e^{it\partial_x^2} \psi_0](0) \|_{L_{t>0}^q} \leq \delta_{\sd}$ is satisfied 
if $\|\psi_0\|_{\dot{H}^{\sigma_c}} \le C \delta_{\sd}$. )
\vspace{3mm}

\begin{proof}
Define a map: for a $\psi_0 \in \dot{H}^{\sigma_c}$ given, 
$$ \mathcal{T}_{\psi_0} \psi (t):=[e^{it\partial_x^2} \psi_0](0) +i [\mathcal{L}_0(|\psi|^{p-1}\psi)](t).$$
By Proposition \ref{P:smoothing} and Sobolev embedding, we have 
\begin{eqnarray*}
\| \mathcal{T}_{\psi_0} \psi\|_{L^q_{t>0}} &\le& \|[e^{it\partial_x^2} \psi_0](0)\|_{L^q_{t>0}} 
+ \|\mathcal{L}_0 (|\psi|^{p-1} \psi) (0,\cdot)\|_{L^q_{t>0}}\\
&\le& \|[e^{it\partial_x^2} \psi_0](0)\|_{L^q_{t>0}} 
+ C\|[\mathcal{L}_0 (|\psi|^{p-1} \psi)](0,\cdot)\|_{\dot{H}_{t}^{\frac{2\sigma_c+1}{4}}} \\
&\le & \|[e^{it \partial_x^2} \psi_0](0)\|_{L^q_{t>0}} + C \|\chi_{[0,\infty)}|\psi|^p\|_{\dot{H}_{t}^{\frac{2\sigma_c-1}{4}}} \\
&\le & \|[e^{it\partial_x^2} \psi_0](0)\|_{L^q_{t>0}} + C \|\psi(0,\cdot)\|^p_{L^q_{t>0}}.
\end{eqnarray*}
Let
$$B:=\{\phi \in L^q_{t>0}:~ \|\phi\|_{L^q_{t>0}} \le 2 \|[e^{it\partial_x^2} \psi_0](0) \|_{L_{t>0}^q}\}.$$
If $\| [e^{it\partial_x^2} \psi_0](0)\|_{L_{t>0}^q} \leq \delta_{\sd}$ 
then $\mathcal{T}_{\psi_0} \psi\in B$ for any $\psi \in B$, taking $\delta_{\sd}$ sufficiently small. 

The difference $\|\mathcal{T}_{\psi_0} \psi-\mathcal{T}_{\psi_0} \tilde{\psi} \|_{L^q_t}$ is similarly estimated by 
\begin{eqnarray*}
\|[ \mathcal{T}_{\psi_0} (|\psi|^{p-1} \psi-|\tilde\psi|^{p-1} \tilde\psi) ] (\cdot) \|_{L^q_{t>0}} 
\le C(\|\psi\|_{L^q_{t>0}}^{p-1}+\|\tilde\psi\|_{L^q_{t>0}}^{p-1})\|\psi-\tilde\psi\|_{L^q_{t>0}}
\end{eqnarray*}
for $\psi, \tilde\psi \in B$. Again taking $\delta_{\sd}$ sufficiently small, we conclude that 
$\mathcal{T}_{\psi_0}$ is a contraction on $B$. There thus exists a unique solution $\tilde\psi \in B$ such that 
$\mathcal{T}_{\psi_0} \tilde\psi =\tilde\psi.$

For the last inequality in the proposition, 
we use Eq. (\ref{E:Duhamel}) for the unique solution $\tilde\psi$ obtained above in $B$. 
Inserting $\tilde\psi$ as the value of $\psi(0,t)$ at time $t$ in the RHS of (\ref{E:Duhamel}), 
The values of $\psi(x,t)$ for any $x$ can be expressed as  
$$ \psi(x,t)=e^{it\partial_x^2}\psi_0 +i \int_{0}^t \frac{e^{\frac{ix^2}{4(t-s)}}}{\sqrt{4\pi i(t-s)}} |\psi(0,s)|^{p-1} \psi(0,s)ds,$$
with $\psi(0,\cdot) \in B$. 
Then, Sobolev embedding and Proposition \ref{P:smoothing} implies 
\begin{eqnarray} \nonumber
\|\psi\|_{\dot{H}^{\sigma_c}_x} &\le& \|e^{it\partial_x^2}\psi_0\|_{\dot{H}^{\sigma_c}_x}
+\|\mathcal{L}_0(|\psi|^p \psi)(\cdot,t)\|_{\dot{H}^{\sigma_c}_x} \\ \nonumber
&\le & \|e^{it\partial_x^2}\psi_0\|_{\dot{H}^{\sigma_c}_x}+ C\|\chi_{[0,t]} |\psi|^{p-1} \psi\|_{\dot{H}^{\frac{2\sigma_c-1}{4}}} \\ \nonumber
&\le & \|\psi_0\|_{\dot{H}^{\sigma_c}_x} +C\|\chi_{[0,t]} |\psi|^{p-1} \psi\|_{L^{q}_{\R}}  \\ \label{ineq:2.5}
&\le & \|\psi_0\|_{\dot{H}^{\sigma_c}_x}+ \|\psi(0,\cdot)\|_{L^q_{t>0}}^p.
\end{eqnarray}
Since $\psi(0,\cdot) \in B$ with $\| [e^{it\partial_x^2} \psi_0](0,t) \|_{L_{t>0}^q} \leq \delta_{\sd}$,  
by Sobolev embedding and Proposition \ref{P:smoothing}(1),  
$$\|\psi(0,\cdot)\|_{L^q_{t>0}}^p \le 2^p \delta_{\sd}^{p-1}\|[e^{it\partial_x^2}\psi_0](0)\|_{L^q_{t>0}} 
\le 2^p \delta_{\sd}^{p-1}\|e^{it\partial_x^2}\psi_0(0)\|_{\dot{H}^{\frac{2\sigma_c+1}{4}}_t} 
\le 2^p \delta_{\sd}^{p-1} \|\psi_0\|_{\dot{H}^{\sigma_c}_x}.$$ 
Taking $\delta_{\sd}$ sufficiently small, the RHS of (\ref{ineq:2.5}) is bounded by $2\|\psi_0\|_{\dot{H}^{\sigma_c}_x}$. 
Note that the time continuity property follows from the fundamental solution, and this concludes  
$$\| \psi(x,t) \|_{C_{[0,\infty)}^0 \dot H_x^{\sigma_c}} \leq 2 \| \psi_0 \|_{\dot{H}_x^{\sigma_c}}.$$.
\end{proof}

\begin{proposition}[scattering criterion] \label{P:scat_cri} Let $p \ge 3$.  
Suppose that $\psi_0\in H^1$ and $\psi \in H^1_x$ solving \eqref{E:pNLS} is forward global with
$$\| \psi(0,\cdot) \|_{L_{t>0}^q} < \infty$$
and with a uniform $H^1_x$ bound 
$$\sup_{t\ge 0} \|\psi(\cdot,t)\|_{H^1_x}\le B.$$
Then $\psi(t)$ scatters in $H^1_x$ as $t\nearrow +\infty$.  This means that there exists $\psi^+\in H^1_x$ such that 
$$\lim_{t\nearrow +\infty} \|\psi(t)- e^{it\partial_x^2} \psi^+\|_{H^1_x} =0.$$
\end{proposition}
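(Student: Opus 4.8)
The plan is to use the Duhamel formula \eqref{E:Duhamel} together with the smoothing estimates in Proposition \ref{P:smoothing} to upgrade the finiteness of the scattering norm $\|\psi(0,\cdot)\|_{L^q_{t>0}}$ to convergence of $e^{-it\partial_x^2}\psi(t)$ in $H^1_x$. First I would establish that the nonlinear Duhamel term has an $H^1_x$-valued limit as $t\to+\infty$. Writing, for $0 < t_1 < t_2$,
$$e^{-it_2\partial_x^2}\psi(t_2) - e^{-it_1\partial_x^2}\psi(t_1) = i\int_{t_1}^{t_2} e^{-is\partial_x^2}\big(\delta(x)|\psi(0,s)|^{p-1}\psi(0,s)\big)\,ds,$$
I would apply the smoothing bound (3a) of Proposition \ref{P:smoothing} (with $\sigma = 0$ and with $\sigma = 1$, so that $\frac{2\sigma-1}{4}$ ranges over $-\frac14$ and $\frac14$, both admissible) on the time interval $[t_1,t_2]$ to get
$$\big\| e^{-it_2\partial_x^2}\psi(t_2) - e^{-it_1\partial_x^2}\psi(t_1)\big\|_{H^1_x} \lesssim \big\| \chi_{[t_1,t_2]}\, |\psi(0,\cdot)|^{p-1}\psi(0,\cdot) \big\|_{\dot H^{-1/4}_t \cap \dot H^{1/4}_t}.$$
By Sobolev embedding the right side is controlled by $\|\chi_{[t_1,t_2]}|\psi(0,\cdot)|^p\|_{L^{\tilde q}_t}$-type quantities, which in turn — after splitting $|\psi|^p = |\psi|^{p-1}\cdot|\psi|$ and using Hölder — are bounded by $\|\psi(0,\cdot)\|_{L^q_{[t_1,t_2]}}^{p-1}$ times a finite factor (absorbing one power of $\psi(0,\cdot)$ through the uniform $H^1_x$ bound and trace estimates, or simply through the scattering norm again). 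Since $\|\psi(0,\cdot)\|_{L^q_{t>0}}<\infty$, the tail $\|\psi(0,\cdot)\|_{L^q_{[t_1,\infty)}}\to 0$ as $t_1\to\infty$, so $\{e^{-it\partial_x^2}\psi(t)\}$ is Cauchy in $H^1_x$ and converges to some $\psi^+\in H^1_x$.

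The one subtlety is that Proposition \ref{P:smoothing}(3a) is stated for $\mathcal{L}_s f$ starting from a fixed time $s$; here I want the increment between two positive times $t_1$ and $t_2$, but this is immediate because $\mathcal{L}_{t_1}f - \mathcal{L}_{t_2}f$ (evaluated appropriately) equals the integral over $[t_1,t_2]$ after conjugating by the free group, and the estimate is translation-invariant in time. I would also need to justify that the pieces at the $\dot H^0 = L^2$ and $\dot H^1$ levels combine to an $H^1$ bound — this is routine since $H^1 = L^2 \cap \dot H^1$ with equivalent norms. A second bookkeeping point: to close the estimate I must control $\|\,|\psi(0,\cdot)|^{p-1}\psi(0,\cdot)\|$ in both $\dot H^{-1/4}_t$ and $\dot H^{1/4}_t$; for the positive-regularity piece a fractional Leibniz/Moser estimate gives $\||\psi|^{p-1}\psi\|_{\dot H^{1/4}} \lesssim \|\psi\|_{L^\infty_t\cap \text{(trace space)}}^{p-1}\|\psi\|_{\dot H^{1/4}}$, and the needed control of $\psi(0,\cdot)$ in $L^\infty_t$ and in the relevant fractional space on $[t_1,\infty)$ follows from the uniform $H^1_x$ bound via the trace estimates already used in Proposition \ref{P:smalldata} and in the local theory (Proposition 2.1), which give $\partial_x\psi(0^\pm,\cdot)\in H^{1/4}$ and $\psi(0,\cdot)\in H^{3/4}$ locally, uniformly by the global bound.

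Finally I would record that $\psi^+$ is indeed in $H^1_x$ (not merely $L^2$): the linear part $e^{-it\partial_x^2}\psi(t)$ has $\psi(t)$ uniformly bounded in $H^1_x$ by hypothesis and the free group is unitary on $H^1_x$, so the limit inherits the $H^1_x$ bound, while the convergence is in the $H^1_x$ norm as shown. Unwinding, $\|\psi(t) - e^{it\partial_x^2}\psi^+\|_{H^1_x} = \|e^{-it\partial_x^2}\psi(t) - \psi^+\|_{H^1_x}\to 0$, which is the claim.

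The main obstacle I anticipate is the $\dot H^{1/4}_t$ (positive regularity) bound on the nonlinearity $|\psi(0,\cdot)|^{p-1}\psi(0,\cdot)$ restricted to a tail interval: one needs a fractional chain-rule estimate compatible with the homogeneous norm and a genuinely uniform-in-time control of a fractional-in-$t$ norm of $\psi(0,\cdot)$, which is exactly where the non-Strichartz function space and the trace smoothing of the equation must be invoked; the $L^2$-level ($\sigma = 0$, $\dot H^{-1/4}$) estimate, by contrast, only needs the $L^q$ scattering norm and is straightforward.
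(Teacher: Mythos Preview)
Your proposal is essentially the same approach as the paper's: reduce to the $L^2_x$ and $\dot H^1_x$ pieces via the smoothing estimates of Proposition~\ref{P:smoothing} at $\sigma=0$ and $\sigma=1$, handle the $L^2_x$ piece by Sobolev embedding $L^{\tilde q}_t\hookrightarrow\dot H^{-1/4}_t$ (giving the tail $\|\psi(0,\cdot)\|_{L^q_{[t,\infty)}}^p$), and handle the $\dot H^1_x$ piece by a fractional chain rule in $\dot H^{1/4}_t$. Your Cauchy formulation and the paper's direct definition of $\psi^+$ followed by the $\Lambda$-estimate are equivalent.

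Where you should be more careful is exactly the step you flagged. The form of fractional Leibniz you wrote, $\||\psi|^{p-1}\psi\|_{\dot H^{1/4}}\lesssim\|\psi\|_{L^\infty_t}^{p-1}\|\psi\|_{\dot H^{1/4}}$, does not by itself produce a factor that vanishes on the tail $[t,\infty)$: neither $\|\psi(0,\cdot)\|_{L^\infty_{[t,\infty)}}$ nor $\|\chi_{[t,\infty)}\psi(0,\cdot)\|_{\dot H^{1/4}}$ is known a priori to tend to zero. The paper instead applies the fractional chain rule with H\"older exponents $r_1,r_2$ satisfying $\tfrac12=\tfrac1{r_1}+\tfrac1{r_2}$ and $q<r_1<\infty$, so that $\|\chi_{[t,\infty)}|\psi|^{p-1}\|_{L^{r_1}}$ is bounded by $C_B\|\psi\|_{L^q_{(t,\infty)}}^{q/r_1}$ (via interpolation with the $L^\infty$ bound from $H^1_x$), and this factor supplies the smallness. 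The remaining factor $\||\nabla|^{1/4}\chi_{[t,\infty)}\psi\|_{L^{r_2}}$ is controlled by interpolating between $\dot H^{1/4}_t$ and $\dot H^{3/4}_t$, and those norms are bounded not from the local theory but by feeding the Duhamel formula \eqref{E:Duhamel_infty} (evaluated at $x=0$) back through Proposition~\ref{P:smoothing}. In particular the bound for $\|\chi_{[t,\infty)}\psi\|_{\dot H^{3/4}}$ contains $\|\chi_{[t,\infty)}|\psi|^{p-1}\psi\|_{\dot H^{1/4}}$ on the right, so there is a bootstrap/absorption step (closable because the coefficient carries the small tail factor). You anticipated that the equation itself must be invoked here, which is correct; just be aware that the local-theory regularity statement alone does not give the required uniform-on-$[t,\infty)$ control.
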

\begin{proof}
Using the equation (\ref{E:Duhamel}), we may write 
\begin{eqnarray} \label{E:Duhamel_infty}
 \psi(t)- e^{it\partial_x^2} \psi^+ = -i \int_{t}^{+\infty} e^{i(t-s)\partial_x^2} \delta(x) |\psi(s)|^{p-1} \psi(s)ds,
\end{eqnarray}
 where 
$$ \psi^+ := \psi_0 + i \int_{0}^{+\infty} e^{-is\partial_x^2} \delta(x) |\psi(s)|^{p-1} \psi(s)ds.$$
Therefore, 
\begin{eqnarray*}
\|\psi(t)- e^{it\partial_x^2} \psi^+\|_{H^1_x}
&=& \|\int_{t}^{+\infty} e^{i(t-s)\partial_x^2} \delta(x) |\psi(s)|^{p-1} \psi(s)ds \|_{H^1_x} \\
&=& \|\Lambda ( |\psi|^{p-1} \psi ) (\cdot,t)\|_{H^1_x}.
\end{eqnarray*}
Thus we shall estimate $\|\Lambda(|\psi|^{p-1} \psi)(\cdot, t)\|_{L^2_x}$ and $\|\Lambda (|\psi|^{p-1} \psi) (\cdot, t)\|_{\dot{H}^1_x}.$
First, $\|\Lambda(|\psi|^{p-1} \psi) (\cdot, t)\|_{L^2_x}$ is estimated by (3b) of Proposition \ref{P:smoothing} and the Sobolev embedding as follows. 
For any $t>0$, 
\begin{eqnarray} \nonumber
\|\Lambda (|\psi|^{p-1} \psi )(\cdot, t)\|_{L^2_x} &\le& 
\|\chi_{[t +\infty)} |\psi|^{p-1} \psi \|_{\dot{H}^{-\frac{1}{4}}} \\ \nonumber
&\le& C \|\chi_{[t,+\infty)} |\psi|^{p-1} \psi\|_{L^{\tilde q}_{\R}} \\ \label{eq:scat1}
&\le& C \|\psi\|_{L^q_{(t,+\infty)}}^p.
\end{eqnarray}
Second, by the Sobolev embedding and fractional chain rule \cite{CW}, for any $t>0$,
\begin{eqnarray} \nonumber
\|\Lambda (|\psi|^{p-1} \psi)(\cdot,t)\|_{\dot{H}^1_x} 
&\le& C \| \chi_{[t,+\infty)}|\psi|^{p-1}\psi\|_{\dot{H}^{\frac{1}{4}}_t} \\ \label{eq:scat2}
&\le& C\|\chi_{[t,+\infty)}|\psi|^{p-1}\|_{L^{r_1}_{\R_t}} \||\nabla|^{\frac{1}{4}} \chi_{[t,+\infty)}\psi\|_{L^{r_2}_{\R_t}}
\end{eqnarray}
with $\frac{1}{2}=\frac{1}{r_1}+\frac{1}{r_2},$ $1< r_1, r_2 <+\infty$. Taking $q < r_1 <+\infty$ and $2 < r_2 <4$,  by interpolation,
\begin{eqnarray*}
 \|\chi_{[t,+\infty)}|\psi|^{p-1}\|_{L^{r_1}_{\R_t}} &\le& C\|\psi\|_{L^q_{(t,+\infty)}}^{\frac{q}{r_1}} \sup_{s \ge t}|\psi(0,s)|^{(1-\frac{q}{r_1})}\\ 
& \le& C\|\psi\|_{L^q_{(t,+\infty)}}^{\frac{q}{r_1}}  \sup_{s \ge t}\|\psi(s)\|_{L^{\infty}_{\R_x}}^{(1-\frac{q}{r_1})} \\
&\le & C\|\psi\|_{L^q_{(t,+\infty)}}^{\frac{q}{r_1}} \sup_{s \ge t}\|\psi(s)\|_{H^1_x}^{(1-\frac{q}{r_1})} \le C_{B} \|\psi\|_{L^q_{(t,+\infty)}}^{\frac{q}{r_1}}
 \end{eqnarray*}
where we have used the Sobolev embedding $H^1(\R_x) \subset L^{\infty}(\R_x)$. Again by interpolation 
\begin{eqnarray*}
\||\nabla|^{\frac{1}{4}} \chi_{[t,+\infty)} \psi\|_{L^{r_2}_{\R_t}} 
&\le&  \|\chi_{[t,+\infty)} \psi\|_{\dot{H}^{\frac{1}{4}}_t}^{\frac{2}{r_2}} 
\||\nabla|^{\frac{1}{4}} \chi_{[t,+\infty)}\psi\|_{L^{\infty}_{\R_t}}^{(1-\frac{2}{r_2})} \\
& \le & C\|\chi_{[t,+\infty)} \psi\|_{\dot{H}^{\frac{1}{4}}}^{\frac{2}{r_2}} 
\left(\|\chi_{[t,+\infty)}\psi\|_{\dot{H}^{\frac{1}{4}}} +\|\chi_{[t,+\infty)}\psi\|_{\dot{H}^{\frac{3}{4}}}\right)^{(1-\frac{2}{r_2})} 
\end{eqnarray*}
where we have used the Sobolev embedding $H^1(\R_t) \subset L^{\infty}(\R_t)$ in the second inequality. 
We go back to the equation (\ref{E:Duhamel_infty}), evaluating at $x=0$, to estimate 
\begin{eqnarray*}
\|\chi_{[t,+\infty)} \psi\|_{\dot{H}^{\frac{1}{4}}} &\le& \|\chi_{[t,+\infty)}[e^{it\partial_x^2} \psi^+](0)\|_{\dot{H}^{\frac{1}{4}}} 
+\|\chi_{[t,+\infty)} \Lambda (|\psi|^{p-1} \psi)(0,\cdot))\|_{\dot{H}^{\frac{1}{4}}} \\
&\le & \|\psi^+\|_{L^2_x} + \|\chi_{[t,+\infty)}|\psi|^{p-1} \psi\|_{\dot{H}^{-\frac{1}{4}}} \\
&\le & \|\psi^+\|_{L^2_x} + \|\psi\|_{L^q_{t>0}}^p,
\end{eqnarray*}
and 
\begin{eqnarray*}
\|\chi_{[t,+\infty)} \psi\|_{\dot{H}^{\frac{3}{4}}} &\le& \|\chi_{[t,+\infty)}[e^{it\partial_x^2} \psi^+](0)\|_{\dot{H}^{\frac{3}{4}}} 
+\|\chi_{[t,+\infty)}\Lambda (|\psi|^{p-1} \psi)(0,\cdot))\|_{\dot{H}^{\frac{3}{4}}} \\
&\le & \|\psi^+\|_{H^1_x} + \|\chi_{[t,+\infty)}|\psi|^{p-1} \psi\|_{\dot{H}^{\frac{1}{4}}}. 
\end{eqnarray*}
Note that we used Lemma \ref{L:sharp-trunc}, and Proposition \ref{P:smoothing} (2b). 
Plugging these results into (\ref{eq:scat2}), we see that for $t>0$ sufficiently large, 
$\| \chi_{[t,+\infty)}|\psi|^{p-1}\psi\|_{\dot{H}^{\frac{1}{4}}}$ is small. This completes the proof combining with (\ref{eq:scat1}). 
\end{proof}

\begin{proposition}[long-time perturbation theory] \label{P:longtimeper} Let $p \ge 3$. 
For each $A \gg 1$, there exists $\epsilon_0=\epsilon_0(A) \ll 1$ and $c=c(A) \gg 1$ such that the following holds.  
Let $\psi \in H_x^1$ for all $t$ solving
$$i \partial_t \psi + \partial_x^2 \psi + \delta |\psi|^{p-1}\psi =0.$$
Let $\tilde \psi \in H_x^1$ for all $t$ and suppose that there exists $e\in L_{t>0}^{\tilde q}$ such that 
$$ i\partial_t\tilde \psi + \partial_x^2 \tilde \psi + \delta (|\tilde \psi|^{p-1}\tilde \psi - e) =0.$$
If 
$$\| \tilde \psi(0,\cdot) \|_{L_{t>0}^q} \leq A \,, \quad \| e(0,\cdot) \|_{L_{t>0}^{\tilde q}}\leq \epsilon_0$$ and 
$$\| [e^{i(t-t_0)\partial_x^2} ( \psi(t_0)-\tilde \psi(t_0))](0)\|_{L^q_{t_0 \le t <\infty}} \leq \epsilon_0$$ for some $t_0 \ge 0$, 
then 
$$\| \psi(0,\cdot) \|_{L_{t>0}^q} \leq c=c(A)<\infty.$$
\end{proposition}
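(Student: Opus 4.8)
The plan is to work entirely at the level of the one‑variable trace equation \eqref{E:charge}, since both the hypotheses and the conclusion only involve $L^q_{t>0}$‑norms of the traces $\psi(0,\cdot)$, $\tilde\psi(0,\cdot)$ and of the error $e(0,\cdot)$ together with the free flow --- no spatial norms enter. The scheme is the standard Kenig--Merle ``divide and iterate'' perturbation argument, but with the free‑flow/Duhamel Strichartz estimates replaced by the smoothing bounds of Proposition~\ref{P:smoothing}, combined with the two Sobolev embeddings $\dot H^{\frac{2\sigma_c+1}{4}}\hookrightarrow L^q$ and $L^{\tilde q}\hookrightarrow \dot H^{\frac{2\sigma_c-1}{4}}$ (both valid here since $-\tfrac12\le\sigma_c<\tfrac12$) and the numerology $p/q=1/\tilde q$, which makes the nonlinearity close under H\"older in time. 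First I would partition $[t_0,\infty)$ into consecutive intervals $I_0,\dots,I_{N-1}$, $I_j=[t_j,t_{j+1})$ with $t_N=+\infty$, chosen so that $\|\tilde\psi(0,\cdot)\|_{L^q_{I_j}}\le\delta$ for each $j$, where $\delta=\delta(p)$ is a small absolute threshold fixed later; since $\|\tilde\psi(0,\cdot)\|_{L^q_{t>t_0}}\le A$ this is possible with $N\le (A/\delta)^q+1$, so $N=N(A)$. (If $t_0>0$, the piece $[0,t_0]$ is harmless: by the local theory recalled above $\psi(0,\cdot)\in H^{3/4}_{(0,T)}\subset L^\infty_{(0,T)}\subset L^q_{(0,T)}$ for every finite $T$, and in the applications $t_0=0$.)

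Set $w=\psi-\tilde\psi$. Subtracting the two equations, $w$ solves $i\partial_t w+\partial_x^2 w+\delta\big(|\psi|^{p-1}\psi-|\tilde\psi|^{p-1}\tilde\psi+e\big)=0$, so specializing its Duhamel representation to $x=0$ gives, for $t\in I_j$,
\[
w(0,t)=[e^{i(t-t_j)\partial_x^2}w(t_j)](0)+i\,[\mathcal{L}_{t_j}(\chi_{I_j}G)](0,t),\qquad
G(s):=|\psi(0,s)|^{p-1}\psi(0,s)-|\tilde\psi(0,s)|^{p-1}\tilde\psi(0,s)+e(0,s).
\]
On $I_j$, write $\mathcal{W}_j:=\|w(0,\cdot)\|_{L^q_{I_j}}$ and $\mathcal{A}_j:=\|[e^{i(t-t_j)\partial_x^2}w(t_j)](0)\|_{L^q_{[t_j,\infty)}}$. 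By $\dot H^{\frac{2\sigma_c+1}{4}}\hookrightarrow L^q$ and Proposition~\ref{P:smoothing}(2a), $\|[\mathcal{L}_{t_j}(\chi_{I_j}G)](0,\cdot)\|_{L^q_\R}\lesssim\|\chi_{I_j}G\|_{\dot H^{\frac{2\sigma_c-1}{4}}}\lesssim\|G\|_{L^{\tilde q}_{I_j}}$; then the pointwise bound $\big||a|^{p-1}a-|b|^{p-1}b\big|\lesssim(|a|^{p-1}+|b|^{p-1})|a-b|$ together with H\"older ($p/q=1/\tilde q$) gives $\|G\|_{L^{\tilde q}_{I_j}}\lesssim(\delta^{p-1}+\mathcal{W}_j^{p-1})\mathcal{W}_j+\|e(0,\cdot)\|_{L^{\tilde q}_{I_j}}$ (using $\|\psi(0,\cdot)\|_{L^q_{I_j}}\le\delta+\mathcal{W}_j$). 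Hence
\[
\mathcal{W}_j\le \mathcal{A}_j+C\big((\delta+\mathcal{W}_j)^{p-1}+\delta^{p-1}\big)\mathcal{W}_j+C\epsilon_0 ,
\]
and once $\delta$ is fixed small and $\mathcal{W}_j\le\delta$ --- which I would verify by a continuity bootstrap in the right endpoint of $I_j$, starting from $\mathcal{W}_j\to0$ as that endpoint shrinks to $t_j$, and for the unbounded interval $I_{N-1}$ by first working on $[t_{N-1},T]$ (finite by the local bound) and letting $T\to\infty$ --- the nonlinear terms are absorbed and $\mathcal{W}_j\le 2\mathcal{A}_j+2C\epsilon_0$.

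Next, decomposing $e^{i(t-t_{j+1})\partial_x^2}w(t_{j+1})=e^{i(t-t_j)\partial_x^2}w(t_j)+i\int_{t_j}^{t_{j+1}}e^{i(t-s)\partial_x^2}\delta\,G(s)\,ds$ and evaluating at $x=0$ for $t\ge t_{j+1}$, the same estimates give
\[
\mathcal{A}_{j+1}\le \mathcal{A}_j+\big\|[\mathcal{L}_{t_j}(\chi_{I_j}G)](0,\cdot)\big\|_{L^q_\R}\le \mathcal{A}_j+C'\big(\delta^{p-1}\mathcal{W}_j+\epsilon_0\big)\le (1+C'')\,\mathcal{A}_j+C''\epsilon_0 ,
\]
using $\mathcal{W}_j\le 2\mathcal{A}_j+2C\epsilon_0$. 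Iterating from $\mathcal{A}_0\le\epsilon_0$ (the hypothesis) yields $\mathcal{A}_j\le (2C'')^{j}(\mathcal{A}_0+\epsilon_0)$ for all $j\le N-1$. Thus, choosing $\epsilon_0=\epsilon_0(A)$ small enough that $\,2(2C'')^{N-1}\cdot 2\epsilon_0+2C\epsilon_0\le\delta\,$ makes the whole induction self‑consistent: every $\mathcal{A}_j$ is then so small that the bootstrap $\mathcal{W}_j\le\delta$ of the previous step is legitimate. This is exactly the place where $\epsilon_0$ must be taken of size $\sim C^{-N(A)}$, i.e.\ depending on $A$.

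Finally, summing, $\|w(0,\cdot)\|_{L^q_{[t_0,\infty)}}^q=\sum_{j=0}^{N-1}\mathcal{W}_j^q\le N\delta^q$, so $\|\psi(0,\cdot)\|_{L^q_{[t_0,\infty)}}\le\|\tilde\psi(0,\cdot)\|_{L^q_{t>t_0}}+N^{1/q}\delta\le A+N^{1/q}\delta=:c(A)$, which together with the finite contribution on $[0,t_0]$ gives the claim. I expect the genuinely delicate point to be \emph{not} any individual estimate --- all of them are routine once Proposition~\ref{P:smoothing} is in hand --- but the bookkeeping of constants across the $N$ steps: the quantities $\mathcal{A}_j$ grow geometrically, so the required input smallness $\epsilon_0$ shrinks like $C^{-N(A)}$, and one has to be careful that the per‑interval continuity bootstraps (a priori finiteness, and closing the argument on the final infinite interval) are genuinely available before the recursion is run.
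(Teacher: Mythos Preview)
Your proposal is correct and follows essentially the same route as the paper: set $w=\psi-\tilde\psi$, partition $[t_0,\infty)$ into $N(A)$ subintervals on which $\|\tilde\psi(0,\cdot)\|_{L^q_{I_j}}$ is small, estimate the trace Duhamel term on each $I_j$ via Proposition~\ref{P:smoothing}(2a) and the Sobolev embeddings $\dot H^{\frac{2\sigma_c+1}{4}}\hookrightarrow L^q$, $L^{\tilde q}\hookrightarrow \dot H^{\frac{2\sigma_c-1}{4}}$, and then iterate the resulting geometric recursion for the free-flow pieces, choosing $\epsilon_0\sim C^{-N(A)}$. The only cosmetic differences are that the paper tracks $\|[e^{i(t-t_j)\partial_x^2}w(t_j)](0)\|_{L^q_{\R_t}}$ rather than your $\mathcal{A}_j$ on $[t_j,\infty)$, and closes the per-interval smallness by an a~posteriori check of the bound \eqref{E:A} after iterating rather than by your continuity bootstrap --- both are equivalent here.
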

\begin{proof}
Put $w=\psi - \tilde\psi$. Then $w$ satisfies 
\begin{equation} \label{E:w}
i \partial_t w + \partial_x^2 w + W=0,
\end{equation}
where 
$$W=\delta (|\tilde \psi+w|^{p-1}(\tilde \psi +w)-|\tilde \psi|^{p-1}\tilde \psi + e).$$
Since $\| \tilde \psi(0,\cdot)\|_{L^q_{[t_0, +\infty)}} \leq A$, there exists a $N=N(A)$ so that the interval 
$[t_0,+\infty)$ may be divided into the sum of $N(A)$ intervals. Namely, 
$[t_0, +\infty)= \cup_{j=1}^{N(A)} I_j$ with $I_j=[t_j,t_{j+1}]$ ($j=0,1,2,..$) so that $\| \tilde \psi(0,\cdot) \|_{L^q_{I_j}} \leq \eta$ ($\eta$ is small to be determined later). 
Let $t \in I_j$. Write the equation (\ref{E:w}) in the integral form. 
\begin{equation} \label{eq:w}
w(t)=e^{i(t-t_j)\partial_x^2} w(t_j) +i \int_{t_j}^t e^{i(t-s)\partial_x^2} W(s)ds.
\end{equation}
We estimate the time $L^q$ norm of $w$ evaluated at $x=0$. 
\begin{eqnarray*}
\|w(0,\cdot)\|_{L^q_{I_j}} \le \|[e^{i(t-t_j)\partial_x^2} w(t_j)](0)\|_{L^q_{I_j}} + \left\|\int_{t_j}^t e^{i(t-s)\partial_x^2} W(s)ds |_{x=0} \right\|_{L^q_{I_j}}. 
\end{eqnarray*}
The last term can be written as, taking into account for the delta potential in $W$,  
$$\left\|\int_{t_j}^t e^{i(t-s)\partial_x^2} W(s)ds |_{x=0} \right\|_{L^q_{I_j}}  
= \|[\mathcal{L}_{t_j} (|\tilde \psi+w|^{p-1}(\tilde \psi +w)(0,\cdot)-|\tilde \psi|^{p-1}\tilde \psi(0,\cdot) + e(\cdot))](0,\cdot)\|_{L^q_{I_j}}$$ 
and then we estimate 
as follows. 
\begin{eqnarray*} 
&&\|[\mathcal{L}_{t_j} (|\tilde \psi +w|^{p-1}(\tilde \psi +w)-|\tilde \psi|^{p-1} \tilde \psi + e)](0,\cdot)\|_{L^q_{I_j}} \\
%&\le& 
%C \|\chi_{I_j}[\mathcal{L}_{t_j} (|\tilde \psi +w|^{p-1}(\tilde \psi +w)-|\tilde \psi|^{p-1}\tilde \psi + e)](0,\cdot)\|_{\dot{H}_t^{\frac{2\sigma_c+1}{4}}} \\
%&\le& 
%C \|\chi_{I_j}(\tilde \psi+w|^{p-1}(\tilde \psi +w)-|\tilde \psi|^{p-1}\tilde \psi + e)\|_{\dot{H}_t^{\frac{2\sigma_c-1}{4}}} \\
&\le & 
C \|\tilde \psi+w|^{p-1}(\tilde \psi +w)-|\tilde \psi|^{p-1}\tilde \psi\|_{L^{\tilde q}_{I_j}} + \|e\|_{L^{\tilde q}_{I_j}}\\
&\le &  
C ( \|\tilde \psi^{p-1} w (0,\cdot)\|_{L^{\tilde q}_{I_j}} + \|w^{p}(0,\cdot)\|_{L^{\tilde q}_{I_j}})+\|e\|_{L^{\tilde q}_{I_j}},\\
\end{eqnarray*}
where, in the first inequality, we have used, by density of $C_0^{\infty}(I_j) \subset L^{\tilde{q}}(I_j)$, Sobolev embedding, and Proposition \ref{P:smoothing} (2a).

The first term of RHS is estimated by H\"older inequality as follows. 
$$ \|\tilde \psi^{p-1} w(0,\cdot)\|_{L^{\tilde q}_{I_j}} \le \|\tilde \psi (0,\cdot)\|_{L^q_{I_j}}^{p-1} \|w(0,\cdot)\|_{L^q_{I_j}}.$$ 
Thus, we have 
\begin{eqnarray*}
\|w(0,\cdot)\|_{L^q_{I_j}} &\le& \|[e^{i(t-t_j)\partial_x^2} w(t_j)] (0)\|_{L^q_{I_j}} +C\eta^{p-1} \|w(0,\cdot)\|_{L^q_{I_j}}\\
&& +C\|w(0,\cdot)\|_{L^q_{I_j}}^p+C\epsilon_0. 
\end{eqnarray*}
We then obtain
\begin{eqnarray} \label{E:B}
\|w(0,\cdot)\|_{L^q_{I_j}} &\le& 2\|[e^{i(t-t_j)\partial_x^2} w(t_j)](0)\|_{L^q_{I_j}}+2C\epsilon_0,
\end{eqnarray}
provided 
$$\eta < \left(\frac{1}{2C}\right)^{\frac{1}{p-1}}$$ 
and 
\begin{eqnarray} \label{E:A}
\|[e^{i(t-t_j)\partial_x^2} w(t_j)] (0)\|_{L^q_{I_j}}+C\epsilon_0 \le \left(\frac{1}{2C}\right)^{\frac{1}{p-1}}. 
\end{eqnarray}
Now take $t=t_{j+1}$ in (\ref{eq:w}), apply $e^{i(t-t_{j+1})\partial_x^2}$ to both hands, 
$$
e^{i(t-t_{j+1})\partial_x^2} w(t_{j+1})=e^{i(t-t_j)\partial_x^2} w(t_j) +i \int_{t_j}^{t_{j+1}} e^{i(t-s)\partial_x^2} W(s)ds, 
$$
and we take $L^q(\R_t)$ norm of this equation after evaluating at $x=0$, 
\begin{eqnarray*}
\|[e^{i(t-t_{j+1})\partial_x^2} w(t_{j+1})](0)\|_{L^q_{\R_t}} 
&\le& \|[e^{i(t-t_j)\partial_x^2} w(t_j)](0)\|_{L^q_{\R_t}} +C\eta^{p-1} \|w(0,\cdot)\|_{L^q_{I_j}}\\
&& +C\|w(0,\cdot)\|_{L^q_{I_j}}^p+C\epsilon_0.
\end{eqnarray*}
Thus, by (\ref{E:B}), 
\begin{eqnarray*}
\|[e^{i(t-t_{j+1})\partial_x^2} w(t_{j+1})](0)\|_{L^q_{\R_t}} 
&\le& 2\|[e^{i(t-t_j)\partial_x^2} w(t_j)](0)\|_{L^q_{\R_t}} +2C \epsilon_0.
\end{eqnarray*}
Iterating this inequalty starting from $j=0$, we have 
$$ \|[e^{i(t-t_{j})\partial_x^2} w(t_{j})](0)\|_{L^q_{\R_t}} \le 2^{j+2} C\epsilon_0.$$ 
To satisfy (\ref{E:A}) for all $I_j$ with $0 \le j \le N-1,$ we require 
$\epsilon_0=\epsilon_0(N)$ to be sufficiently small 
such that $ 2^{N+2} C\epsilon_0 < \left(\frac{1}{2C} \right)^{\frac{1}{p-1}}$ (i.e. $\epsilon_0$ needs to be taken in terms of $A$), 
and we obtain 
$$\| \psi(0,t) \|_{L_{t>0}^q} \leq c=c(A).$$
\end{proof}

\section{Profile decomposition}

\begin{proposition}[profile decomposition] \label{P:decomposition} Let $p \ge 3.$
Suppose that $\{\psi_n\}$ is a uniformly bounded sequence in $H^1_x$.  Then for each $M$, 
there exists a subsequence of $\{\psi_n\}$, also denoted $\{\psi_n\}$ and
\begin{enumerate}
\item for each $1\leq j \leq M$, there exists a (fixed in $n$) profile $\phi^j \in H^1$
\item for each $1\leq j \leq M$, there exists a sequence (in $n$) of time shifts $t_n^j$
\item there exists a sequence (in $n$) of remainders $w_n^M(x)$ in $H^1$ such that
$$\psi_n = \sum_{j=1}^M e^{-it_n^j\partial_x^2} \phi^j + w_n^M$$
\end{enumerate}
The time sequences have a pairwise divergence property: for $1\leq i\neq j \leq M$, we have
$$\lim_{n\to \infty} |t_n^i-t_n^j| = +\infty.$$
The remainder sequence $\{w_n^M\}_n$ has the following asymptotic smallness property
$$\lim_{M\to \infty} \Big[ \lim_{n\to \infty} \| [e^{it\partial_x^2}w_n^M](0) \|_{L_{\R_t}^q} \Big] =0.$$ 
For fixed $M$ and any $0\leq \sigma_c \leq 1$, we have the asymptotic $\dot H^{\sigma_c}$ decoupling
\begin{eqnarray} \label{E:dotnorm_expansion}
\| \psi_n \|_{\dot H^{\sigma_c}}^2 = \sum_{j=1}^M \| \phi^j \|_{\dot H^{\sigma_c}}^2 + \| w_n^M \|_{\dot H^{\sigma_c}}^2 + o_n(1), 
\end{eqnarray}
also we have 
\begin{equation} \label{E:piecewise_expansion}
| \psi_n (0)|^{p+1}=\sum_{j=1}^M |[e^{-it_n^j\partial_x^2} \phi^j](0)|^{p+1}+ |w_n^M (0)|^{p+1}+ o_n(1).
\end{equation}
\end{proposition}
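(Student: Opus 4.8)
The plan is to run the standard inductive ``bubble extraction'' scheme of Bahouri--G\'erard and Keraani (in the NLS context, cf.\ \cite{HR, KM}), with two features forced by \eqref{E:pNLS}: the only symmetry that enters is the time translation $\psi_n\mapsto e^{-it_n\partial_x^2}\psi_n$ (there is no scaling, since we work in $H^1$, and no space translation, since the equation is not space-translation invariant), and the relevant defect of compactness is measured in the norm $\|[e^{it\partial_x^2}\,\cdot\,](0)\|_{L^q_t}$ rather than in a Strichartz norm. Everything rests on the following inverse estimate, which replaces the usual refined Strichartz inequality: there is $\beta=\beta(\epsilon,A')>0$ such that if $\|v_n\|_{H^1}\le A'$ for all $n$ and $\limsup_n\|[e^{it\partial_x^2}v_n](0)\|_{L^q_t}\ge\epsilon>0$, then, after passing to a subsequence, there are time shifts $\tau_n$ with $e^{i\tau_n\partial_x^2}v_n\rightharpoonup\phi$ weakly in $H^1$ and $\|\phi\|_{H^1}\ge\beta$. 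I expect the proof of this estimate to be the main obstacle, since it is the point where the special structure of \eqref{E:pNLS} must enter.

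To prove it, set $g_n:=[e^{it\partial_x^2}v_n](0)$. By Proposition~\ref{P:smoothing}(1) applied with $\sigma=0$ and $\sigma=1$, $\{g_n\}$ is bounded in $\dot H^{1/4}_t\cap\dot H^{3/4}_t$, so $\|P_Ng_n\|_{L^2_t}\lesssim\min(2^{-N/4},2^{-3N/4})A'$ at each dyadic frequency $2^N$. Combined with Bernstein, $\|P_Ng\|_{L^q_t}\lesssim 2^{N(\tfrac12-\tfrac1q)}\|P_Ng\|_{L^2_t}$, and the inequality $\tfrac12-\tfrac1q>\tfrac14$ (which holds because $q=2(p-1)>4$), the part of $\|g_n\|_{L^q_t}$ coming from frequencies $|N|\ge N_*$ is $\lesssim A'2^{-cN_*}$ with $c=\tfrac14-\tfrac1q>0$; choosing $N_*\sim c^{-1}\log(A'/\epsilon)$ and using $\|g_n\|_{L^q_t}\ge\epsilon/2$ (along a subsequence), there is a single dyadic frequency $2^{N_0}$ with $|N_0|\lesssim\log(A'/\epsilon)$ and $\|P_{N_0}g_n\|_{L^q_t}\gtrsim_{\epsilon,A'}1$; interpolating with $\|P_{N_0}g_n\|_{L^2_t}\lesssim_{\epsilon,A'}1$ gives $\|P_{N_0}g_n\|_{L^\infty_t}\gtrsim_{\epsilon,A'}1$, so there exist $t_n$ with $|P_{N_0}g_n(t_n)|\gtrsim_{\epsilon,A'}1$. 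Now I transfer to the $v_n$ side through the Fourier identity from the proof of Proposition~\ref{P:smoothing}(1), namely $\widehat{[e^{it\partial_x^2}v](0)}(\tau)=c\,\tau^{-1/2}\bigl(\hat v(\sqrt\tau)+\hat v(-\sqrt\tau)\bigr)\chi_{\tau>0}$: this exhibits $v\mapsto[e^{it\partial_x^2}v](0)$ as, up to a constant, an isometry of the even subspace of $H^1$ onto $\{G\in\dot H^{1/4}_t\cap\dot H^{3/4}_t:\supp\hat G\subset[0,\infty)\}$, under which time translation of $g$ corresponds to the Schr\"odinger flow of $v$ and weak limits correspond. Taking $\tau_n:=-t_n$ and a weak-$H^1$ limit $\phi$ of $e^{i\tau_n\partial_x^2}v_n$, the frequency-$2^{N_0}$ component of $[e^{it\partial_x^2}\phi](0)$ equals $\lim_nP_{N_0}g_n(\,\cdot+t_n)$, which is nonzero at the origin because a frequency-localized piece converges locally uniformly along any weakly convergent subsequence; quantitatively $\|[e^{it\partial_x^2}\phi](0)\|_{\dot H^{1/4}_t\cap\dot H^{3/4}_t}\gtrsim_{\epsilon,A'}1$, hence $\|\phi\|_{H^1}\ge\|\phi^{\mathrm{ev}}\|_{H^1}\gtrsim_{\epsilon,A'}1$.

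Granting the inverse estimate, I carry out the extraction. Set $w_n^0:=\psi_n$; given $w_n^{M-1}$ with $\sup_n\|w_n^{M-1}\|_{H^1}\lesssim A$, let $\eta_M$ be the supremum of $\|\phi\|_{H^1}$ over all $\phi$ arising as a weak-$H^1$ limit of some $e^{it_n\partial_x^2}w_n^{M-1}$, choose $\phi^M,\,t_n^M$ with $\|\phi^M\|_{H^1}\ge\eta_M/2$ (if $\eta_M=0$, take $\phi^M=0$ and let $t_n^M$ diverge from all previously chosen time sequences), and put $w_n^M:=w_n^{M-1}-e^{-it_n^M\partial_x^2}\phi^M$, so that $e^{it_n^M\partial_x^2}w_n^M\rightharpoonup0$ in $H^1$; for each fixed $M$ this uses only finitely many subsequence extractions, and a diagonal argument handles the $M\to\infty$ statement below. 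After a further subsequence along which every difference $t_n^k-t_n^i$ converges in $[-\infty,+\infty]$, pairwise divergence $|t_n^i-t_n^j|\to\infty$ follows by a smallest-gap contradiction: if $t_n^j-t_n^i\to\tau\in\R$ for some $i<j$ with $j-i$ minimal and $\eta_j>0$, then, writing $w_n^{j-1}=w_n^{i-1}-\sum_{k=i}^{j-1}e^{-it_n^k\partial_x^2}\phi^k$, applying $e^{it_n^j\partial_x^2}$ and taking weak limits --- in which the image of $w_n^{i-1}$ tends weakly to $e^{i\tau\partial_x^2}\phi^i$ and cancels the $k=i$ term (which converges \emph{strongly} to $e^{i\tau\partial_x^2}\phi^i$), while the terms $i<k<j$ disperse because $|t_n^j-t_n^k|\to\infty$ (by minimality the gaps $t_n^k-t_n^i$ already diverge) --- yields $\phi^j=0$, contradicting $\|\phi^j\|_{H^1}\ge\eta_j/2$; indices with $\eta_j=0$ are handled by the explicit choice of $t_n^j$. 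The decoupling \eqref{E:dotnorm_expansion} for every $s\in[0,1]$ (hence for $s=0,1,\sigma_c$) now follows by expanding $\|\psi_n\|_{\dot H^s}^2$: the profile--profile cross terms are $\langle\phi^i,e^{i(t_n^i-t_n^j)\partial_x^2}\phi^j\rangle_{\dot H^s}\to0$ by dispersion, and the profile--remainder cross terms are $\langle\phi^j,e^{it_n^j\partial_x^2}w_n^M\rangle_{\dot H^s}\to0$ since $e^{it_n^j\partial_x^2}w_n^M\rightharpoonup0$ in $H^1$ (a consequence of $e^{it_n^j\partial_x^2}w_n^j\rightharpoonup0$ and pairwise divergence). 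Taking $s=0$ and $s=1$ gives $\sup_n\|w_n^M\|_{H^1}\lesssim A$, which closes the induction, and summing over $j$ yields $\sum_j\|\phi^j\|_{H^1}^2\le\limsup_n\|\psi_n\|_{H^1}^2<\infty$, so $\|\phi^j\|_{H^1}\to0$ and $\eta_M\le2\|\phi^M\|_{H^1}\to0$. If $\epsilon_M:=\limsup_n\|[e^{it\partial_x^2}w_n^M](0)\|_{L^q_t}$ did not tend to $0$, the inverse estimate applied to the ($H^1$-bounded) sequence $\{w_n^M\}$ would give $\eta_{M+1}\ge\beta(\epsilon_M,cA)$ bounded below, contradicting $\eta_M\to0$; this is the asymptotic smallness of the remainders.

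Finally, for the pointwise decoupling \eqref{E:piecewise_expansion}, pass to a further subsequence so that each $t_n^j$ ($1\le j\le M$) converges in $[-\infty,+\infty]$; by pairwise divergence, at most one limit, say $\bar t_{j_0}$, is finite. For $j\ne j_0$ one has $|t_n^j|\to\infty$, and the Fourier identity above identifies $[e^{-it_n^j\partial_x^2}\phi^j](0)$ with the Fourier transform (in time) of the fixed function $c\,\tau^{-1/2}(\hat\phi^j(\sqrt\tau)+\hat\phi^j(-\sqrt\tau))\chi_{\tau>0}$, which lies in $L^1_\tau$ because $\phi^j\in H^1(\R)$ forces $\hat\phi^j\in L^1$; hence $[e^{-it_n^j\partial_x^2}\phi^j](0)\to0$ by Riemann--Lebesgue. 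Moreover $w_n^M\rightharpoonup0$ in $H^1$ --- this follows from $e^{it_n^{j_0}\partial_x^2}w_n^M\rightharpoonup0$ together with the boundedness of $t_n^{j_0}$ when such a $j_0$ exists, and otherwise from the fact that $w_n^M$ then has the same weak limit $\Psi$ as $\psi_n$ with $\|\Psi\|_{H^1}\le\eta_M\to0$ --- so $w_n^M(0)\to0$ by the compact embedding $H^1(-1,1)\hookrightarrow C([-1,1])$. Therefore $\psi_n(0)=[e^{-it_n^{j_0}\partial_x^2}\phi^{j_0}](0)+o_n(1)$ (the displayed term omitted if there is no $j_0$), and both sides of \eqref{E:piecewise_expansion} reduce to $|[e^{-it_n^{j_0}\partial_x^2}\phi^{j_0}](0)|^{p+1}+o_n(1)$ (respectively $o_n(1)$). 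This completes the proof.
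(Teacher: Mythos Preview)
Your proposal is essentially correct and parallels the paper's argument: both prove an inverse-type estimate by frequency localization (you use dyadic Littlewood--Paley on the time side, the paper uses a single annular cutoff $R^{-1}<|\xi|<R$ on the space side and interpolates $L^2_t$--$L^\infty_t$), then run the standard iterative extraction. Your organization of the pairwise divergence (smallest-gap contradiction) and the asymptotic smallness (contrapositive of the inverse estimate) is a bit different in flavor from the paper's direct inductive computations, but logically equivalent.

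There is one genuine gap in your treatment of \eqref{E:piecewise_expansion}. In the case where all $t_n^j$, $1\le j\le M$, diverge, you assert $w_n^M\rightharpoonup 0$ because ``$\|\Psi\|_{H^1}\le\eta_M\to 0$''. This conflates the fixed-$M$ statement with the $M\to\infty$ limit: for the given $M$, you only get $\|\Psi\|_{H^1}\le\eta_{M+1}$, which need not be small. Concretely, if $\psi_n=\Psi+\sum_{k=1}^{M}e^{-is_n^k\partial_x^2}\chi^k$ with $\|\chi^k\|_{H^1}>\|\Psi\|_{H^1}>0$ and pairwise divergent $s_n^k$, your extraction may well produce $\phi^k=\chi^k$, $t_n^k=s_n^k$ for $k\le M$, leaving $w_n^M=\Psi$ and $w_n^M(0)=\Psi(0)\neq 0$. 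The fix is immediate and is essentially what the paper does (one profile at a time): in this case $\psi_n(0)-w_n^M(0)=\sum_{j=1}^M[e^{-it_n^j\partial_x^2}\phi^j](0)\to 0$ by your Riemann--Lebesgue argument, so both $|\psi_n(0)|^{p+1}$ and $|w_n^M(0)|^{p+1}$ converge to $|\Psi(0)|^{p+1}$ while every profile term tends to $0$, and \eqref{E:piecewise_expansion} follows without ever claiming $w_n^M(0)\to 0$.

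A minor remark: your low-frequency summation for the inverse estimate uses $c=\tfrac14-\tfrac1q>0$, i.e.\ $q>4$, which is $p>3$; the proposition is stated for $p\ge 3$. The paper's own cutoff scale $R_1=\langle 2AB_1^{-1}\rangle^{\max(1/\sigma_c,\,1/(1-\sigma_c))}$ likewise degenerates at $\sigma_c=0$, so this is a shared cosmetic issue rather than a defect of your approach.
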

\begin{proof}
For $R>0$, let $\chi_R(\xi)$ be a smooth cutoff to $R^{-1}<|\xi|<R$.  
Let $A = \limsup_{n\to \infty} \|\psi_n \|_{H^1_x}$ and $B_1 = \lim_{n\to \infty} \|[e^{it\partial_x^2}\psi_n](0)\|_{L_{\R_t}^q}$. 
If $B_1=0$, the proof is done. 
Let $B_1>0$. 
Since for $0 \le \sigma_c \le 1$,
$$\int_{|\xi|<R^{-1}} |\hat \psi_n (\xi)|^2 |\xi|^{2\sigma_c} \, d\xi \leq R^{-2\sigma_c} \|\psi_n\|_{L^2}^2 \leq A^2 R^{-2\sigma_c}$$
$$\int_{|\xi|>R} |\hat \psi_n (\xi)|^2 |\xi|^{2\sigma_c} \, d\xi \leq R^{2(\sigma_c-1)} \|\psi_n \|_{\dot H^1}^2 \leq A^2 R^{2(\sigma_c-1)}.$$
 We may take a $R_1$ large enough so that $AR_1^{-\sigma_c} \le B_1/2$ and $A R_1^{\sigma_c-1} \le B_1/2$, 
specifically $R_1 = \la 2AB_1^{-1}\ra^{\max\{ \frac{1}{\sigma_c}, \frac{1}{1-\sigma_c}\}}$ so that 
$$\lim_{n\to \infty} \| [e^{it\partial_x^2} (\delta-\check{\chi}_{R_1})*\psi_n](0)\|_{L_{\R_t}^q} \leq \frac12 B_1.$$
It thus follows, using Proposition \ref{P:smoothing}(1),
\begin{eqnarray*}
\left( \frac12 B_1 \right)^q  &\leq& \lim_{n\to \infty} \| [\check \chi_{R_1} * e^{it\partial_x^2}\psi_n](0) \|_{L_{\R_t}^q}^q \\
&\leq& \lim_{n\to \infty} \| [\check \chi_{R_1} * e^{it\partial_x^2}\psi_n](0) \|_{L_{\R_t}^2}^2 
\| [\check \chi_{R_1} * e^{it\partial_x^2}\psi_n](0) \|_{L_{\R_t}^\infty}^{q-2}.
\end{eqnarray*}
For the factor $\| [\check \chi_{R_1} * e^{it\partial_x^2}\psi_n](0) \|_{L_{t>0}^2}^2 $, we use again the smoothing estimate of Proposition \ref{P:smoothing}(1) to bound by
$$\| \check\chi_{R_1}* \psi_n \|_{\dot H_x^{-1/2}}^2 \leq R_1 \| \check\chi_{R_1}* \psi_n \|_{L^2_x}^2 \leq R_1 A^2.$$
Thus, we see 
$ \lim_{n\to \infty} \| [\check \chi_{R_1} * e^{it\partial_x^2}\psi_n](0) \|_{L_{\R_t}^\infty} >(R_1 A^2)^{-\frac{1}{q-2}}(B_1/2)^{\frac{q}{q-2}}$, 
and we take a sequence $\{t_n^1\}_n$ such that   
$$[\check\chi_{R_1}*e^{it\partial_x^2}\psi_n ](0,t_n^1) = \int \check\chi_{R_1}(-y) (e^{it_n^1\partial_x^2}\psi_n)(y) \, dy,$$
and
\begin{equation} \label{E:lowerbdd}
\frac12(R_1 A^2)^{-\frac{1}{q-2}} \left(\frac{B_1}{2}\right)^{\frac{q}{q-2}} \leq \left| \int \check\chi_{R_1}(-y) e^{it_n^1\partial_x^2} \psi_n(y) \, dy \right|.
\end{equation}
Consider the sequence $\{e^{it_n^1\partial_x^2}\psi_n\}_n$, which is uniformly bounded in $H^1_x$, and  
pass to subsequence such that $e^{it_n^1\partial_x^2}\psi_n$ converges weakly in $H^1_x$ to some $\phi^1 \in H^1$.  
By Cauchy-Schwarz inequality, using that $\| \check\chi_{R_1} \|_{\dot H^{-\sigma_c}} \lesssim R_1^{\frac12-\sigma_c}$ and (\ref{E:lowerbdd}), 
$$\| \phi^1\|_{\dot H^{\sigma_c}} \geq (R_1^{\frac12-\sigma_c})^{-1} (R_1 A^2)^{-\frac{1}{q-2}} \left(\frac{B_1}{2}\right)^{\frac{q}{q-2}} \frac12.$$
Then for any $0\leq \sigma_c \leq 1$
$$\lim_{n\to \infty} \| \psi_n - e^{-it_n^1\partial_x^2} \phi^1 \|_{\dot H^{\sigma_c}}^2 = \|\psi_n\|_{\dot H^{\sigma_c}}^2 - \|\phi^1 \|_{\dot H^{\sigma_c}}^2.$$

If $|t_n^1| \to +\infty,$ since $\|[e^{-it\partial_x^2} \phi^1](0)\|_{L^q_{\R_t}} \le \|\phi^1\|_{\dot{H}_x^{\sigma_c}}$, possibly taking  
a subsequence, we have $|[e^{-i t_n^1 \partial_x^2} \phi^1](0)|^q \to 0$ as $n\to +\infty$. 
On the other hand, since $\psi_n$ is uniformly bounded in $H^1_x$, 
there is a weak limit $\tilde{\psi} \in H^1_x$ and $\psi_n(0) \to \tilde{\psi}(0)$ as $n\to \infty$ by Proposition 4.1 of \cite{HL1}.
Then, we have 
\begin{eqnarray*}
&& \lim_{n\to \infty}|[\psi_n-e^{-i t_n^1 \partial_x^2} \phi^1](0)|^{p+1} \\
&=&\lim_{n\to \infty}\{(\psi_n(0) -[e^{-it_n^1 \partial_x^2} \phi^1](0)) (\overline{\psi_n(0) -[e^{-it_n^1 \partial_x^2} \phi^1](0)})\}^{\frac{p+1}{2}} \\
&=&|\tilde{\psi}(0)|^{p+1} = \lim_{n\to \infty} (|\psi_n(0)|^{p+1}-|[e^{-i t_n^1 \partial_x^2} \phi^1](0)|^{p+1}), 
\end{eqnarray*}
i.e. 
\begin{equation} \label{E:asym_L_infty} 
\lim_{n\to \infty}[|\psi_n(0)|^{p+1}-|[e^{-i t_n^1 \partial_x^2} \phi^1](0)|^{p+1}-|w_n^1(0)|^{p+1}]=0.  
\end{equation}
If $t_n^1 \to t^*$ for some finite $t^*$, by the time continuity of free Schr\"odinger group, 
$\lim_{n\to\infty}\psi_n(0)=\tilde{\psi}(0)=[e^{-it^*\partial_x^2} \phi^1](0)$. 
Thus we may write 
\begin{eqnarray*}
\lim_{n\to \infty}|[\psi_n-e^{-i t_n^1 \partial_x^2} \phi^1](0)|^{p+1}
&=&\lim_{n\to \infty}(|\psi_n(0)|^2-|[e^{-i t_n^1 \partial_x^2} \phi^1](0)|^2)^{\frac{p+1}{2}} \\
&=& 0 = \lim_{n\to \infty} (|\psi_n(0)|^{p+1}-|[e^{-i t_n^1 \partial_x^2} \phi^1](0)|^{p+1}),
\end{eqnarray*}
which again gives (\ref{E:asym_L_infty}).

Repeat the process, keeping the same $A$ but switching to $B_2$ obtaining $R_2$ in terms of $B_2$.  
Basically this amounts to replacing $\psi_n$ by $\psi_n - e^{-it_n^1\partial_x^2}\phi^1$ and rewriting the above to obtain $t_n^2$ and $\phi^2$ where
$$ \phi^2 = \text{weak}\lim [ e^{it_n^2\partial_x^2}(\psi_n - e^{-t_n^1\partial_x^2}\phi^1)] \quad \mbox{in} \ H^1_x.$$
As a result,
\begin{align*}
\lim_{n\to \infty} \| \psi_n - e^{-it_n^1\partial_x^2} \phi^1  - e^{-it_n^2 \partial_x^2}\phi^2 \|_{\dot{H}^{\sigma_c}}^2 
&= \lim_{n\to \infty} \| \psi_n - e^{-it_n^1\partial_x^2} \phi^1\|_{\dot{H}^{\sigma_c}}  - \| \phi^2 \|_{\dot{H}^{\sigma_c}}^2 \\
&= \lim_{n\to \infty} \|\psi_n\|_{\dot{H}^{\sigma_c}}^2 - \|\phi^1 \|_{\dot{H}^{\sigma_c}}^2- \|\phi^2\|_{\dot{H}^{\sigma_c}}^2,
\end{align*}
and same for
\begin{align*}
& \lim_{n\to \infty} |[\psi_n - e^{-it_n^1\partial_x^2} \phi^1  - e^{-it_n^2 \partial_x^2}\phi^2](0)|^{p+1} \\
&= \lim_{n\to \infty} (|\psi_n(0)|^{p+1} - |[e^{-it_n^1\partial_x^2} \phi^1](0)|^{p+1} -|[e^{-it_n^2 \partial_x^2}\phi^2](0)|^{p+1}). 
\end{align*}

If $t_n^2-t_n^1$ converged to something finite (say $t^*$), then $\phi^2$ would be the weak limit of $e^{it^*\partial_x^2}[e^{it_n^1\partial_x^2}\psi_n - \phi^1]$, 
which is zero, contradicting the lower bound.  Hence $|t_n^1-t_n^2| \to \infty$ and thus
$$\la e^{-it_n^1\partial_x^2} \phi^1, e^{-it_n^2 \partial_x^2} \phi^2 \ra_{\dot{H}^{\sigma_c}}  \to 0.$$

Again repeat this process, we have 
$$
\|\phi^1\|_{\dot{H}^{\sigma_c}}^2+\|\phi^1\|_{\dot{H}^{\sigma_c}}^2+\cdots+\|\phi^M\|_{\dot{H}^{\sigma_c}}^2
+\lim_{n\to+\infty}\|w_n^M\|_{\dot{H}^{\sigma_c}}^2= \lim_{n\to+\infty}\|\psi_n\|_{\dot{H}^{\sigma_c}}^2.
$$
Let $B_{M+1}:=\lim_{n\to +\infty}\|[e^{it \partial_x^2} w_n^M](0)\|_{L^q_{\R_t}}$ and we wish to show that $B_{M+1} \to 0.$ 
Note that from the above equality and the lower bound for $\|\phi^M\|_{\dot{H}^{\sigma_c}}$, 
we obtain 
$$\sum_{M=1}^\infty R_M^{-\theta} B_M^{\frac{q}{q-2}} \le 2A^{\frac{2(q-1)}{q-2}}, \quad \theta=\frac{1}{q-2}+\frac{1}{2}-\sigma_c=\frac{1}{2(p-2)}+\frac{1}{2}-\sigma_c>0,$$
whose LHS diverges if $B_M$ does not converge to $0$. 
\end{proof}

\begin{lemma}
\label{L:lqbd}
With $w_n^M$ as defined in Proposition \ref{P:decomposition} (in particular, $w_n^0=\psi_n$), let
$$B_M = \lim_{n\to \infty} \| [e^{it\partial_x^2} w_n^{M-1}](0) \|_{L_{\R_t}^q}.$$
Then
$$ \lim_{n\to\infty} \| [e^{i(t-t_n^M) \partial_x^2} \phi^M] (0)\|_{L_{\R_t}^q} \leq 2B_M.$$
\end{lemma}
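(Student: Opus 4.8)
The plan is to reduce the statement to a single, $n$-independent estimate and then obtain it by pushing a weak limit through the ``trace propagator'' $Tf := [e^{it\partial_x^2}f](0)$. The two observations driving the proof are: (a) the $L^q_{\R_t}$ norm of $t \mapsto [e^{it\partial_x^2}f](0)$ is invariant under translation in $t$, so the shifts $t_n^M$ are irrelevant to the quantity being estimated; and (b) by the construction in the proof of Proposition \ref{P:decomposition}, $\phi^M$ is precisely the weak $H^1_x$-limit, along the chosen subsequence, of $g_n := e^{it_n^M \partial_x^2} w_n^{M-1}$.

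First I would note that $[e^{i(t-t_n^M)\partial_x^2}\phi^M](0) = F(t - t_n^M)$ with $F(t) := [e^{it\partial_x^2}\phi^M](0)$, so that $\|[e^{i(t-t_n^M)\partial_x^2}\phi^M](0)\|_{L^q_{\R_t}} = \|F\|_{L^q_{\R_t}}$ for every $n$; hence the lemma is equivalent to $\|[e^{it\partial_x^2}\phi^M](0)\|_{L^q_{\R_t}} \le 2B_M$. Next, I would recall from Proposition \ref{P:smoothing}(1) (with $\sigma = \sigma_c$, $s=0$) together with the Sobolev embedding $\dot H^{\frac{2\sigma_c+1}{4}}(\R_t) \hookrightarrow L^q(\R_t)$, and the elementary bound $\|f\|_{\dot H^{\sigma_c}} \lesssim \|f\|_{H^1}$ (valid since $0<\sigma_c<\tfrac12$), that $T$ is a bounded linear operator $H^1_x \to L^q_{\R_t}$. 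Since $g_n \rightharpoonup \phi^M$ weakly in $H^1_x$ and $T$ is bounded and linear, one has $T g_n \rightharpoonup T\phi^M$ weakly in $L^q_{\R_t}$ (for $h \in L^{q'}_{\R_t}$, write $\langle T g_n, h\rangle = \langle g_n, T^* h\rangle$ and use $T^* h \in (H^1_x)^*$). As $L^q_{\R_t}$ is reflexive ($q = 2(p-1)\in(1,\infty)$), its norm is weakly lower semicontinuous, so $\|T\phi^M\|_{L^q_{\R_t}} \le \liminf_n \|T g_n\|_{L^q_{\R_t}}$.

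It then remains to identify the right-hand side. Writing $T g_n(t) = [e^{it\partial_x^2} e^{it_n^M\partial_x^2} w_n^{M-1}](0) = [e^{i(t+t_n^M)\partial_x^2} w_n^{M-1}](0)$ and using translation invariance once more gives $\|T g_n\|_{L^q_{\R_t}} = \|[e^{it\partial_x^2} w_n^{M-1}](0)\|_{L^q_{\R_t}}$, which tends to $B_M$ by the definition of $B_M$. Chaining the inequalities yields $\|[e^{it\partial_x^2}\phi^M](0)\|_{L^q_{\R_t}} \le B_M$, even slightly stronger than the claimed bound $2B_M$ (the factor $2$ in the statement is harmless slack). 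I do not anticipate a genuine obstacle here: the only step needing attention is the passage of the weak limit through $T$, and this is immediate from the boundedness of $T$, which is exactly what the smoothing estimate Proposition \ref{P:smoothing}(1) supplies; the role of $t_n^M$ is entirely neutralized by the translation invariance of the $L^q_{\R_t}$ norm.
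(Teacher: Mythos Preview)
Your argument is correct. By translation invariance you reduce to bounding $\|T\phi^M\|_{L^q_{\R_t}}$; boundedness of $T:H^1_x\to L^q_{\R_t}$ (from Proposition~\ref{P:smoothing}(1), Sobolev embedding, and $\|f\|_{\dot H^{\sigma_c}}\lesssim\|f\|_{H^1}$) lets you pass the weak $H^1$ limit $g_n=e^{it_n^M\partial_x^2}w_n^{M-1}\rightharpoonup\phi^M$ to weak $L^q_{\R_t}$ convergence $Tg_n\rightharpoonup T\phi^M$, and weak lower semicontinuity of the norm plus $\|Tg_n\|_{L^q_{\R_t}}=\|[e^{it\partial_x^2}w_n^{M-1}](0)\|_{L^q_{\R_t}}\to B_M$ finishes the job, indeed with the sharper constant $1$ rather than $2$.

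This is a genuinely different route from the paper's. The paper does not argue via weak lower semicontinuity; instead it reuses the Littlewood--Paley cutoff $\chi_{R_M}$ from the construction of $\phi^M$, splits $e^{i(t-t_n^M)\partial_x^2}\phi^M$ into a piece outside the band $R_M^{-1}\le|\xi|\le R_M$ (controlled by the $H^1$ bound on $\phi^M$ and the specific choice of $R_M$ in terms of $A,B_M$) and the band-limited piece, which it then estimates via Sobolev and Proposition~\ref{P:smoothing}. Your argument is shorter, uses less of the internal structure of the profile construction, and avoids tracking the dependence on $R_M$; in fact the last displayed inequality in the paper's proof, $\|\check\chi_{R_1}*\phi^1\|_{\dot H^{\sigma_c}_x}\lesssim A R_1^{-(1-\sigma_c)}$, is not justified as written (the cutoff $\chi_{R_1}$ localizes \emph{to} the band, not away from it), so your approach is also more robust. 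The only thing the paper's frequency-localization buys is that it stays entirely within the spaces already appearing in the construction, but your abstract functional-analytic argument is cleaner and yields a better constant.
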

\begin{proof}
We will write the argument for $M=1$ (the general case is analogous). As in the proof of Proposition \ref{P:decomposition}, let
$$ A=\lim_{n\to \infty} \|\psi_n\|_{H^1_x}$$
and
$$R_1 = \la 2A B_1^{-1} \ra^{\max( \frac{1}{\sigma_c}, \frac{1}{1-\sigma_c})}$$
and $\chi_{R_1}(\xi)$ be a cutoff to $R_1^{-1}\leq |\xi|\leq R_1$.    As in the beginning of the proof of Proposition \ref{P:decomposition},
\begin{align*}
\indentalign \| (\delta - \check\chi_{R_1})* e^{i (t-t_n^1) \partial_x^2} \phi^1 (0) \|_{L_{\R_t}^q}^2 
\lesssim \| [(\delta- \check\chi_{R_1})* e^{it\partial_x^2} \phi^1](0) \|_{\dot H_t^{\frac{2\sigma_c+1}{4}}}^2 \\
&\lesssim \| (\delta - \check\chi_{R_1}) * \phi^1 \|_{\dot H_x^{\sigma_c}}^2 \lesssim {R_1}^{-2\sigma_c} \|\phi^1\|_{L^2}^2 + {R_1}^{-2(1-\sigma_c)} \|\phi^1 \|_{\dot H^1}^2 \\
&\leq A^2({R_1}^{-2\sigma_c}+{R_1}^{-2(1-\sigma_c)}) \leq \frac14 B_1^2
\end{align*}
This, and the similar estimates at the beginning of the proof of Proposition \ref{P:decomposition}, show that it suffices to prove
\begin{equation}
\label{E:lqlem2}
\lim_{n\to \infty} \| \check\chi_{R_1}*e^{i(t-t_n^1)\partial_x^2}\phi^1(0) \|_{L_{\R_t}^q}^2 \leq \frac14 B_1^2, 
\end{equation}
and this can be seen as follows. By the translation invariance of $L^q_{\R_t}$ norm, 
\begin{eqnarray*}
\| \check\chi_{R_1}*e^{i(t-t_n^1)\partial_x^2}\phi^1(0) \|_{L_{\R_t}^q}
&=& \| \check\chi_{R_1}*e^{it\partial_x^2}\phi^1(0) \|_{L_{\R_t}^q}
\end{eqnarray*}
and by Sobolev embedding and Proposition \ref{P:smoothing}, we have,  
\begin{eqnarray*}
\| \check\chi_{R_1}*e^{it\partial_x^2}\phi^1(0) \|_{L_{\R_t}^q} 
&\lesssim & \|\check\chi_{R_1}*e^{it\partial_x^2}\phi^1(0)\|_{\dot{H}_t^{\frac{2\sigma_c+1}{4}}} \\
&\lesssim & \|\check\chi_{R_1}*\phi^1\|_{\dot{H}^{\sigma_c}_x} \\ 
&\lesssim &  \left(A^2 R_1^{-2(1-\sigma_c)}\right)^{\frac{1}{2}} \le B_1/2.
\end{eqnarray*}
\end{proof}

\section{Minimal non scattering solution}

In this section we will prove that there exists a {\it minimal non scattering solution}. 
For this purpose we prepare the following lemma which gives additional estimates 
under the situation (1) of Theorem \ref{T:global-blowup}. We recall that $\varphi_0$ is the ground state to (\ref{E:pNLS}). 
It is known that $\varphi_0(x)=2^{\frac{1}{p-1}} e^{-|x|}$ (see (1.9) of \cite{HL1}).

\begin{lemma} 
Let $p>3$ and $\psi_0 \in H^1_x.$  Assume $(\ref{E:ME})$ and $\eta(0)<1.$ If 
$\psi $ is a $H^1_x$ solution to (\ref{E:pNLS}), then for all $t\in \R,$
\begin{equation} \label{E:Grad_Energy}
\frac{(p-1)}{2(p+1)} \|\partial_x \psi(t)\|_{L^2}^2 \le E(\psi(t)) \le \frac{1}{2} \|\partial_x \psi(t)\|_{L^2}^2.  
\end{equation}
Furthermore, 
if we take $\delta>0$ such that $M(\psi_0)^{\frac{1-\sigma_c}{\sigma_c}}E(\psi_0)\le (1-\delta)M(\varphi_0)^{\frac{1-\sigma_c}{\sigma_c}}E(\varphi_0),$ 
then there exists $c_{\delta}>0$ such that 
for all $t \in \R$, 
\begin{equation} \label{E:LB}
 4 \|\partial_x \psi\|_{L^2}^2 -2 |\psi(0,t)|^{p+1} \ge c_{\delta} \|\partial_x \psi_0\|_{L^2}^2. 
\end{equation}
\end{lemma}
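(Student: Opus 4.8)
The plan is to exploit the variational structure induced by the sharp Gagliardo--Nirenberg inequality of Proposition \ref{P:sharpGN}, exactly as in the Kenig--Merle framework. First I would record the explicit value of the best constant and of the relevant ground-state quantities. From $|\psi(0)|^2 \le \|\psi\|_{L^2}\|\partial_x\psi\|_{L^2}$ one gets $|\psi(0)|^{p+1} \le \|\psi\|_{L^2}^{\frac{p+1}{2}} \|\partial_x\psi\|_{L^2}^{\frac{p+1}{2}}$, and since mass is conserved this reads $|\psi(0,t)|^{p+1} \le C_{\mathrm{GN}} \, M(\psi_0)^{\frac{p+1}{4}} \|\partial_x\psi(t)\|_{L^2}^{\frac{p+1}{2}}$, with $C_{\mathrm{GN}}$ expressible through $\varphi_0$ by the equality case; the exponent $\frac{p+1}{2}>2$ is precisely the supercritical feature. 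Plugging this into $E(\psi(t)) = \tfrac12\|\partial_x\psi(t)\|_{L^2}^2 - \tfrac1{p+1}|\psi(0,t)|^{p+1}$ gives the upper bound $E \le \tfrac12\|\partial_x\psi\|_{L^2}^2$ trivially (the nonlinear term has a favorable sign for the focusing equation with the energy's sign convention — here $-\tfrac1{p+1}|\psi(0)|^{p+1}\le 0$), and a lower bound of the shape $E(\psi(t)) \ge f(\|\partial_x\psi(t)\|_{L^2})$ where $f(y) = \tfrac12 y^2 - c\, y^{\frac{p+1}{2}}$.

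Next I would set $y(t) = \|\partial_x\psi(t)\|_{L^2}$ and $y_0$ the analogous ground-state quantity normalized so that $\eta(t) = y(t)/(\text{ground state})$; the hypothesis $\eta(0)<1$ together with Theorem \ref{T:global-blowup}(1) gives $\eta(t)<1$, i.e. $y(t)$ stays in the region to the left of the maximizer of $f$. On that region $f$ is increasing and, crucially, the ratio $f(y)/(\tfrac12 y^2) = 1 - 2c\,y^{\frac{p-3}{2}}$ stays bounded below away from $0$: since $y(t) \le \eta(0)\cdot(\text{ground state})$ is uniformly bounded, there is $\kappa=\kappa(\delta)>0$ with $f(y(t)) \ge \tfrac{\kappa}{2} y(t)^2$; combined with $E(\psi(t)) = E(\psi_0)$ and the upper bound, this pins $\|\partial_x\psi(t)\|_{L^2}^2$ between two fixed multiples of $E(\psi_0)$, and $E(\psi_0) \asymp \|\partial_x\psi_0\|_{L^2}^2$ by the same two-sided bound at $t=0$. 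That already yields \eqref{E:Grad_Energy} after matching constants (the left constant $\tfrac{p-1}{2(p+1)}$ should pop out by writing $E = \tfrac12 y^2 - \tfrac1{p+1}|\psi(0)|^{p+1}$ and using $|\psi(0)|^{p+1}\le \tfrac{p-1}{\cdots}$—I would reverse-engineer it from $\eta(t)\le 1$).

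For \eqref{E:LB}, write the left side as $4y(t)^2 - 2|\psi(0,t)|^{p+1} \ge 4y(t)^2 - 2c\,y(t)^{\frac{p+1}{2}} = 2y(t)^2(2 - c\,y(t)^{\frac{p-3}{2}})$. The strengthened hypothesis $M^{\frac{1-\sigma_c}{\sigma_c}}E \le (1-\delta)M(\varphi_0)^{\frac{1-\sigma_c}{\sigma_c}}E(\varphi_0)$ forces $\eta(t)$ to be bounded above by some $\eta_\delta < 1$ for all $t$ (this is the standard continuity/energy-trapping argument: the set $\{t : \eta(t) \le \eta_\delta\}$ is open, closed, and nonempty once $\eta_\delta$ is chosen from the energy gap), hence $y(t)^{\frac{p-3}{2}} \le \eta_\delta^{\frac{p-3}{2}}(\text{g.s.})^{\frac{p-3}{2}}$ makes the bracket $2 - c\,y(t)^{\frac{p-3}{2}} \ge c_\delta' > 0$; then $4y(t)^2 - 2|\psi(0,t)|^{p+1} \ge c_\delta' y(t)^2$, and finally $y(t)^2 \gtrsim \|\partial_x\psi_0\|_{L^2}^2$ by \eqref{E:Grad_Energy} applied at both $t$ and $0$ (both are comparable to $E(\psi_0)$). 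The main obstacle is bookkeeping the constants so that the uniform-in-$t$ gap $\eta(t) \le \eta_\delta<1$ is genuinely derived from the $\delta$-gap in \eqref{E:ME}-type hypothesis rather than merely from $\eta(0)<1$; this is where one must invoke the function $f$'s shape together with conservation of $M$ and $E$ and a connectedness argument, and get $c_\delta$, $\eta_\delta$ depending only on $\delta$ (and $p$), not on the particular solution.
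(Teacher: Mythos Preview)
Your proposal is correct and follows essentially the same route as the paper: both derive \eqref{E:Grad_Energy} by inserting the sharp Gagliardo--Nirenberg bound $|\psi(0)|^{p+1}\le \|\psi\|_{L^2}^{\frac{p+1}{2}}\|\partial_x\psi\|_{L^2}^{\frac{p+1}{2}}$ into the energy and using $\eta(t)<1$ (the constant $\tfrac{p-1}{2(p+1)}$ falls out because $\|\varphi_0\|_{L^2}^{\frac{p+1}{2}}\|\partial_x\varphi_0\|_{L^2}^{\frac{p-3}{2}}=2$), and both obtain \eqref{E:LB} by first upgrading to a quantitative gap $\eta(t)\le 1-\delta_1(\delta)$ and then analyzing the one-variable function $y\mapsto 4(y^2-y^{\frac{p+1}{2}})$ on $[0,1-\delta_1]$. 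The only cosmetic difference is that the paper normalizes by the ground state from the outset and works directly with $\eta(t)$, whereas you work with the unnormalized gradient and convert to $\|\partial_x\psi_0\|_{L^2}^2$ at the end via \eqref{E:Grad_Energy}.
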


\proof The upper bound of the energy in (\ref{E:Grad_Energy}) follows by the definition of Energy $E$ and the focusing nonlinearity. 
Use the sharp Gagliardo-Nirenberg inequality and $\eta(t)<1$ for the lower bound, i.e., 
\begin{eqnarray*}
E(\psi) &\ge& \frac{1}{2} \|\partial_x \psi\|_{L^2}^2 
\Big(1-\frac{1}{p+1} \|\psi\|_{L^2}^{\frac{p+1}{2}} \|\partial_x \psi\|_{L^2}^{\frac{p-3}{2}}\Big) \\
&> & 
\frac{1}{2} \|\partial_x \psi\|_{L^2}^2 
\Big(1-\frac{1}{p+1} \|\varphi_0\|_{L^2}^{\frac{p+1}{2}} \|\partial_x \varphi_0\|_{L^2}^{\frac{p-3}{2}}\Big) \\
&=& \frac{p-1}{2(p+1)} \|\partial_x \psi\|_{L^2}^2, 
\end{eqnarray*}
where we have used the fact $\|\partial_x \varphi_0\|_{L^2}=\|\varphi_0\|_{L^2}=2^{\frac{1}{p-1}}$ in the last equality (see \cite{HL1}).
Next, we show (\ref{E:LB}). We may take $\delta_1=\delta_1(\delta)>0$ such that 
\begin{equation} \label{E:4.1}
\|\psi_0\|_{L^2}^{\frac{1-\sigma_c}{\sigma_c}}\|\partial_x \psi(t)\|_{L^2} \le (1-\delta_1) 
\|\varphi_0\|_{L^2}^{\frac{1-\sigma_c}{\sigma_c}}\|\partial_x \varphi_0\|_{L^2},
\end{equation} 
for all $t\in \R.$ Let 
\begin{equation*}
h(t):=\frac{1}{\|\varphi_0\|_{L^2}^{\frac{2(1-\sigma_c)}{\sigma_c}}\|\partial_x \varphi_0\|_{L^2}^2} 
(4\|\psi_0\|_{L^2}^{\frac{2(1-\sigma_c)}{\sigma_c}}\|\partial_x \psi(t)\|_{L^2}^2
-2\|\psi_0\|_{L^2}^{\frac{2(1-\sigma_c)}{\sigma_c}} |\psi(0,t)|^{p+1}).  
\end{equation*}
By Gagliardo-Nirenberg inequality, 
$$h(t) \ge 
g \left(\frac{\|\psi_0\|_{L^2}^{\frac{(1-\sigma_c)}{\sigma_c}}\|\partial_x \psi(t)\|_{L^2}}
{\|\varphi_0\|_{L^2}^{\frac{(1-\sigma_c)}{\sigma_c}}\|\partial_x \varphi_0\|_{L^2}}\right),$$
where $g(y):=4(y^2- y^{\frac{p+1}{2}}).$ 
The inequality (\ref{E:4.1}) implies the variable $y$ of $g(y)$ is in the interval
$0 \le y \le 1-\delta_1$ and then we see that there exists a constant $c=c_{\delta_1}>0$ such that 
$g(y) \ge cy^2$ if $0 \le y \le 1-\delta_1$. \qed
\vspace{3mm}

\begin{lemma} \label{lem:ex_waveop}(Existence of wave operator) Let $p > 3$. 
Suppose $\psi^+ \in H^1_x$ and 
\begin{equation} \label{Assum:wave_op} 
\frac{1}{2} \|\psi^+\|_{L^2}^{\frac{2(1-\sigma_c)}{\sigma_c}}
\|\partial_x \psi^+\|_{L^2}^2 <M(\varphi_0)^{\frac{1-\sigma_c}{\sigma_c}}E(\varphi_0).
\end{equation}
There exists $\psi_0 \in H^1_x$ such that $\psi$ solving (\ref{E:pNLS}) with initial data $\psi_0$ is global 
in $H^1_x$, with 
$$ M(\psi)=\|\psi^+\|_{L^2}^2, \quad E(\psi)=\frac{1}{2} \|\partial_x \psi^+\|_{L^2}^2,$$
$$ \|\partial_x \psi(t)\|_{L^2} \|\psi_0\|_{L^2}^{\frac{1-\sigma_c}{\sigma_c}} 
< \|\varphi_0\|_{L^2}^{\frac{1-\sigma_c}{\sigma_c}} \|\partial_x \varphi_0\|_{L^2}$$
and
$$ \lim_{t \nearrow +\infty} \|\psi(t)- e^{it \partial_x^2} \psi^+ \|_{H^1_x}=0.$$
Moreover, if $\|[e^{it\partial_x^2} \psi^+](0)\|_{L^q_{t>0}} \le \delta_{\sd},$ then 
$$\|\psi_0\|_{\dot{H}^{\sigma_c}} \le 2 \|\psi^+\|_{\dot{H}^{\sigma_c}}, 
\quad \|\psi(0,\cdot)\|_{L^q_{t>0}} \le 2\|[e^{it\partial_x^2} \psi^+](0)\|_{L^q_{t>0}}.$$
\end{lemma}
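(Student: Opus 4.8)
The plan is to construct $\psi_0$ by solving the equation backwards from $t = +\infty$ with asymptotic state $\psi^+$, i.e. to set up a fixed-point argument for the integral equation on a half-line $[T, +\infty)$ with $T$ large, and then to propagate the solution back to $t = 0$ using the $H^1_x$ global existence theory together with the dichotomy of Theorem \ref{T:global-blowup}.

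First I would work near $t = +\infty$. Define the map
$$\mathcal{N}\psi(t) = e^{it\partial_x^2}\psi^+ - i[\Lambda(|\psi|^{p-1}\psi)](t),$$
so that a fixed point solves \eqref{E:pNLS} on $[T,\infty)$ and satisfies $\psi(t) - e^{it\partial_x^2}\psi^+ = -i\Lambda(|\psi|^{p-1}\psi)(\cdot,t)$. I would run the contraction in the space of functions with $\|\psi(0,\cdot)\|_{L^q_{t>T}}$ small and $\|\psi\|_{L^\infty_{t>T}\dot H^{\sigma_c}_x}$ bounded, exactly as in Proposition \ref{P:smalldata}, using the smoothing estimates of Proposition \ref{P:smoothing} (parts (2b), (3b) and \eqref{eq:Lambda}) and Sobolev embedding to control $\|\Lambda(|\psi|^{p-1}\psi)(0,\cdot)\|_{L^q}$ and $\|\Lambda(|\psi|^{p-1}\psi)(\cdot,t)\|_{\dot H^{\sigma_c}_x}$; since $\|[e^{it\partial_x^2}\psi^+](0)\|_{L^q_{t>T}} \to 0$ as $T \to \infty$ by Proposition \ref{P:smoothing}(1) and dominated convergence, for $T$ large the map contracts. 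To upgrade to $H^1_x$ one repeats the estimate at the regularity level $\sigma = 1$: \eqref{eq:Lambda} with $\sigma=1$ gives $\|\Lambda(|\psi|^{p-1}\psi)\|_{\dot H^1_x} \lesssim \|\chi_{[t,\infty)}|\psi|^{p-1}\psi\|_{\dot H^{1/4}_t}$, and the fractional chain rule plus interpolation (as in the proof of Proposition \ref{P:scat_cri}) bounds this by a power of $\|\psi\|_{L^q_{(t,\infty)}}$ times the $H^1_x$ bound, so closing the argument in $C^0_{[T,\infty)}H^1_x$ is routine once the $\dot H^{\sigma_c}$-subcritical piece is controlled. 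This produces $\psi$ on $[T,\infty)$ with $\lim_{t\to+\infty}\|\psi(t)-e^{it\partial_x^2}\psi^+\|_{H^1_x}=0$, hence $M(\psi)=\|\psi^+\|_{L^2}^2$ and, since $\|\partial_x(\psi(t)-e^{it\partial_x^2}\psi^+)\|_{L^2}\to 0$ and $|\psi(0,t)|\to 0$, also $E(\psi)=\tfrac12\|\partial_x\psi^+\|_{L^2}^2$.

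Next I would push the solution back to $t=0$. Hypothesis \eqref{Assum:wave_op} says $M(\psi)^{(1-\sigma_c)/\sigma_c}E(\psi) < M(\varphi_0)^{(1-\sigma_c)/\sigma_c}E(\varphi_0)$, so $\psi$ satisfies \eqref{E:ME}. For $t$ near $+\infty$ the smallness of $\psi(t)-e^{it\partial_x^2}\psi^+$ in $H^1_x$ together with $\|\partial_x e^{it\partial_x^2}\psi^+\|_{L^2}=\|\partial_x\psi^+\|_{L^2}$ and \eqref{Assum:wave_op} forces $\eta(t) < 1$ for $t$ large; by the dichotomy (Theorem \ref{T:global-blowup}(1)) applied with initial time $T$, $\psi$ is global in both directions and $\eta(t)<1$ for all $t$, which gives in particular the global $H^1_x$ existence, the bound $\|\partial_x\psi(t)\|_{L^2}\|\psi_0\|_{L^2}^{(1-\sigma_c)/\sigma_c} < \|\varphi_0\|_{L^2}^{(1-\sigma_c)/\sigma_c}\|\partial_x\varphi_0\|_{L^2}$, and lets me set $\psi_0 := \psi(0)$. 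Finally, for the last two inequalities: under the extra smallness $\|[e^{it\partial_x^2}\psi^+](0)\|_{L^q_{t>0}} \le \delta_{\sd}$, I would redo the fixed point directly on $[0,\infty)$ (the argument is uniform in the base time, so the threshold $T$ is not needed), obtaining $\|\psi(0,\cdot)\|_{L^q_{t>0}} \le 2\|[e^{it\partial_x^2}\psi^+](0)\|_{L^q_{t>0}}$ from the contraction ball, and $\|\psi_0\|_{\dot H^{\sigma_c}} \le 2\|\psi^+\|_{\dot H^{\sigma_c}}$ from the $\dot H^{\sigma_c}_x$ bound evaluated at $t=0$ exactly as in \eqref{ineq:2.5} — noting $\|e^{i\cdot 0\,\partial_x^2}\psi^+\|_{\dot H^{\sigma_c}}=\|\psi^+\|_{\dot H^{\sigma_c}}$ — after absorbing the nonlinear term using $\delta_{\sd}$ small.

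The main obstacle I expect is the backward construction at $t = +\infty$ in the $H^1_x$ topology: unlike the $\dot H^{\sigma_c}$ level, where Proposition \ref{P:smoothing} is tailored and the nonlinearity is $L^q$-subcritical, controlling $\|\Lambda(|\psi|^{p-1}\psi)\|_{C^0_t H^1_x}$ requires the $\dot H^{1/4}_t$ estimate of $\chi_{[t,\infty)}|\psi|^{p-1}\psi$ together with the fractional-Leibniz splitting, and one must check that the endpoint exponents $r_1 > q$, $2 < r_2 < 4$ and the resulting interpolation chain close with a gain in $\|\psi\|_{L^q_{(t,\infty)}}$ small rather than just bounded — this is precisely the computation done for the scattering criterion, so it should go through, but it is the step where the non-Strichartz function space matters and where care is needed. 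A secondary technical point is verifying that $\eta(t)\to \tfrac12\cdot(\text{ratio})^{?}$... more precisely that the energy/mass of $\psi$ genuinely equal those of $\psi^+$ (requiring $|\psi(0,t)|^{p+1}\to 0$, which follows from $\|\psi(t)-e^{it\partial_x^2}\psi^+\|_{H^1}\to 0$ and $\|e^{it\partial_x^2}\psi^+\|_{L^\infty_x}\to 0$ by dispersive decay), so that Theorem \ref{T:global-blowup} is applicable with the constant on the right-hand side of \eqref{Assum:wave_op}.
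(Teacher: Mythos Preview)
Your proposal is correct and follows essentially the same route as the paper: solve the final-value problem via a contraction for the $\Lambda$-integral equation on $[T,\infty)$ in the $L^q_t$/$\dot H^{\sigma_c}_x$ framework of Proposition~\ref{P:smalldata}, upgrade to $H^1_x$ via the $\dot H^{1/4}_t$ estimate and fractional chain rule as in Proposition~\ref{P:scat_cri}, read off $M$ and $E$ from the asymptotics, and then invoke Theorem~\ref{T:global-blowup}(1) at a large time $T$ to extend globally. The only cosmetic difference is that the paper obtains $|\psi(0,t_n)|\to 0$ along a sequence from $\psi(0,\cdot)\in L^q_{[T,\infty)}$ rather than via your $L^\infty_x$ dispersive decay of $e^{it\partial_x^2}\psi^+$ (which is also fine by density of Schwartz functions in $H^1$); since the energy is conserved, a sequential limit suffices.
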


The statement above is for the case $t>0$, but the case $t<0$ can be similarly proved.  

\begin{proof}
It suffices to solve the integral equation:
\begin{equation*}
\psi(t)=e^{it\partial_x^2} \psi^+-i\Lambda(|\psi(0)|^{p-1}\psi(0))(t)  
\end{equation*}
for $t\ge T$ with $T$ large. Since 
\begin{equation*}
\|[e^{it\partial_x^2} \psi^+ ](0)\|_{L^q_{t>0}} \lesssim \|[e^{it\partial_x^2} \psi^+ ](0)\|_{\dot{H}_t^{\frac{2\sigma_c+1}{4}}} \le \|\psi^+\|_{\dot{H}_x^{\sigma_c}},   
\end{equation*}
there exists a large $T>0$ such that $\|[e^{it\partial_x^2} \psi^+ ](0)\|_{L^q_{[T,\infty)}} \le \delta_{\sd}$. 
Thus we may solve as in the proof of Proposition \ref{P:smalldata}.  
\begin{eqnarray*}
 \|\psi(0,\cdot)\|_{L^q_{[T,+\infty)}} &\le& \|[e^{it\partial_x^2} \psi^+ ](0)\|_{L^q_{[T,\infty)}} + C\|\Lambda(|\psi(0)|^{p-1}\psi(0))(\cdot)\|_{L^q_{[T,+\infty)}} \\
 &\le&  \|[e^{it\partial_x^2} \psi^+ ](0)\|_{L^q_{[T,\infty)}} +C\|\psi(0,\cdot)\|_{L^q_{[T,+\infty)}}^p.
 \end{eqnarray*}
%Set 
%\begin{equation*}
%B=\{v \in L^q_t:~ \|v\|_{L^q_{[T,+\infty)}} \le 2 \|[e^{it\partial_x^2} \psi^+ ](0)\|_{L^q_{[T,\infty)}} \}.  
%\end{equation*}
If $T$ is sufficiently large, we have $\|\psi(0,\cdot)\|_{L^q_{[T,+\infty)}} < 2\|[e^{it\partial_x^2} \psi^+](0)\|_{L^q_{[T,+\infty)}}.$ 
Using this, similarly as in the proof of Proposition \ref{P:scat_cri}, we obtain if $t \ge T$,
\begin{eqnarray*}
\|\psi(t)-e^{it\partial_x^2} \psi^+\|_{L^2_x} \le C\|\Lambda(|\psi(0)|^{p-1} \psi(0))\|_{L^2_x}\le \|\psi(0,\cdot)\|^p_{L^q_{[T, +\infty)}} \le C\delta_{\sd}^p,  
\end{eqnarray*}
\begin{eqnarray*}
\|\psi(t)-e^{it\partial_x^2} \psi^+\|_{\dot{H}^1_x} \le C\|\chi_{[T,+\infty)} |\psi|^{p-1} \psi\|_{\dot{H}^{1/4}_t},  
\end{eqnarray*}
which are small if $T$ is sufficiently large. 
Thus, $\psi(t) -e^{it\partial_x^2} \psi^+ \to 0$ in $H^1_x$ as $t\to +\infty$. Note that $\|\partial_x e^{it\partial_x^2} \psi^+\|_{L^2_x}=\|\partial_x \psi^+\|_{L^2}$. 
On the other hand, since $[e^{it\partial_x^2} \psi^+](0)$ is uniformly bounded in $L^q_{t>0}$, 
there exists a sequence $\{t_n\}_{n} \to +\infty$ such that 
[$e^{it_n\partial_x^2} \psi^+](0) \to 0$ as $n\to +\infty$. Together with all these facts,  we have 
\begin{equation*}
E(\psi(t))=\lim_{n\to +\infty} \left\{\frac{1}{2}\|\partial_x e^{it_n \partial_x^2} \psi^+\|_{L^2_x}-\frac{1}{p+1}|e^{i t_n \partial_x^2} \psi^+(0)|^{p+1} \right\}
=\frac{1}{2}\|\partial_x \psi^+\|_{L^2_x}. 
\end{equation*}
Similarly, $M(\psi(t))=\|\psi^+\|_{L^2_x}^2$. 
It now follows from (\ref{Assum:wave_op}) that 
$$M(\psi(t))^{\frac{1-\sigma_c}{\sigma_c}}E(\psi(t))<M(\varphi_0)^{\frac{1-\sigma_c}{\sigma_c}}E(\varphi_0),$$
and 
\begin{eqnarray*}
\lim_{t\to +\infty} \|\partial_x \psi(t)\|_{L^2_x}^2 \|\psi(t)\|_{L^2_x}^{\frac{2(1-\sigma_c)}{\sigma_c}} 
&=& \lim_{t\to+\infty}\|\partial_x e^{it\partial_x^2} \psi^+\|_{L^2_x}^2 \|e^{it\partial_x^2} \psi^+\|_{L^2_x}^{\frac{2(1-\sigma_c)}{\sigma_c}} \\
&=& \|\partial_x \psi^+\|_{L^2_x}^2 \|\psi^+\|_{L^2_x}^{\frac{2(1-\sigma_c)}{\sigma_c}} \\
&< & 2 M(\varphi_0)^{\frac{1-\sigma_c}{\sigma_c}} E(\varphi_0)=\frac{p-3}{p+1}\|\partial_x \varphi_0 \|^2_{L^2_x} \|\varphi_0\|_{L^2_x}^{\frac{2(1-\sigma_c)}{\sigma_c}}
\end{eqnarray*}
We can take a large $T$ such that $\|\partial_x \psi(T)\|_{L^2_x} \|\psi(T)\|_{L^2_x}^{\frac{1-\sigma_c}{\sigma_c}} 
< \|\partial_x \varphi_0 \|_{L^2_x} \|\varphi_0\|_{L^2_x}^{\frac{1-\sigma_c}{\sigma_c}}$.
Then, applying Theorem \ref{T:global-blowup} 
we evolve $\psi(t)$ from $T$ back to the time $0$. 
\end{proof}

We are now in position to enter in the main subject of this section.  
%We start the argument by contradiction -- suppose that there exists a solution $\psi$ for which 
%$$M(\psi)^{\frac{1-\sigma_c}{\sigma_c}} E(\psi) < M(\varphi_0)^{\frac{1-\sigma_c}{\sigma_c}} E(\varphi_0)$$
%and $\psi$ does not scatter.
If the initial data $\psi_0$ to (\ref{E:pNLS}) satisfies $M(\psi_0)^{\frac{1-\sigma_c}{\sigma_c}} E(\psi_0) \le \frac{p-1}{2(p+1)}\delta_{sd}$ and $\eta(0)<1$, we have 
$$\|\psi_0\|^{2/\sigma_c}_{\dot{H}^{\sigma_c}_x(\R)} \le \|\psi_0\|_{L^2_x}^{\frac{2(1-\sigma_c)}{\sigma_c}} \|\partial_x \psi_0\|_{L^2}^2 \le 
M(\psi_0)^{\frac{1-\sigma_c}{\sigma_c}} E(\psi_0)\le \delta_{sd}, $$
and the scattering holds by the small data scattering, Proposition \ref{P:smalldata}. 
Now let $A$ be the infimum of $M(\psi)^\frac{1-\sigma_c}{\sigma_c}E(\psi)$, taken over all evolution of $\psi$ which does not scatter.  
In what follows $\mathrm{NLS}(t)\psi$ denotes the solution to (\ref{E:pNLS}) with initial data $\psi$. 
By the above argument, $0<\frac{p-1}{2(p+1)}\delta_{sd} \le A$, and moreover due to Proposition \ref{P:scat_cri}, $A$ satisfies 
\begin{enumerate}
\item For any $\psi$ such that $M(\psi)^{\frac{1-\sigma_c}{\sigma_c}} E(\psi)<A$, it holds $\|[\mathrm{NLS}(t)\psi ](0,\cdot)\|_{L^q_{\R_t}} <\infty$, 
\item For any $A'>A$, there exists a non scattering $\mathrm{NLS}(t)\psi$ for which
$$A\leq M(\psi)^{\frac{1-\sigma_c}{\sigma_c}} E(\psi) \leq A'.$$
\end{enumerate}
If $A \ge M(\varphi_0)^{\frac{1-\sigma_c}{\sigma_c}} E(\varphi_0)$, Theorem \ref{T:main} is true. We therefore proceed with the proof by assuming 
$A < M(\varphi_0)^{\frac{1-\sigma_c}{\sigma_c}} E(\varphi_0)$.

The first task is to apply the profile decomposition to show that there exists $\psi$ 
such that  $M(\psi)^{\frac{1-\sigma_c}{\sigma_c}}E(\psi)=A$ and $\mathrm{NLS}(t)\psi$ does not scatter.  
We will call such a solution a \emph{minimal non scattering solution}.
Take a sequence of initial data $\psi_{0,n}$, with $1>\eta_n(0):=\|\psi_{0,n}\|_{L^2}^{\frac{1-\sigma_c}{\sigma_c}} \|\partial_x \psi_{0,n}\|_{L^2}
/\|\varphi_{0}\|_{L^2}^{\frac{1-\sigma_c}{\sigma_c}} \|\partial_x \varphi_{0}\|_{L^2}$, each evolving to non scattering solutions, 
for which $M(\psi_{0,n})=1$, $E(\psi_{0,n}) \geq A$ and $E(\psi_{0,n}) \to A$. Apply the profile decomposition 
to $\psi_{0,n}$ which is uniformly bounded in $H^1$ to obtain, extracting a subsequence,  
\begin{eqnarray} \label{eq:1}
&&\psi_{0,n} = \sum_{j=1}^M e^{-it_n^j \partial_x^2} \phi^j +w_n^M, \\ \label{eq:2} 
&& E(\psi_{0,n}) = \sum_{j=1}^M E(e^{-it_n^j \partial_x^2} \phi^j) +E(w_n^M) +o_n(1),
\end{eqnarray}
where $M$ will be taken large later. Remark that each term in (\ref{eq:2}) is non negative by the same reason for (\ref{E:Grad_Energy}), 
using the decompositions (\ref{E:dotnorm_expansion}) and (\ref{E:piecewise_expansion}) in $\eta_n(0)<1$. 
Taking the limit $n \to \infty$ in both hand sides, 
\begin{eqnarray}\label{eq:Edecom}
\lim_{n\to \infty} \sum_{j=1}^M E(e^{-it_n^j \partial_x^2} \phi^j) \leq A
\end{eqnarray}
for all $j$. Also, by $\sigma_c=0$ in (\ref{E:dotnorm_expansion}), we have  
\begin{eqnarray} \label{eq:L2decom}
\sum_{j=1}^M M(\phi^j) + \lim_{n\to \infty} M(w_n^M) = \lim_{n\to \infty} M(\psi_{0,n})=1.
\end{eqnarray}

Here we consider two cases.  
\begin{itemize}
 \item[Case 1] There are at least two indexes $j$ such that $\phi^j$ is not zero. 
 \item[Case 2] Only one profile is non zero, i.e. without loss of generality $\phi^1 \ne 0$, and $\phi^j=0$ for all $j\ge 2.$
\end{itemize}

We begin with Case 1. 
By (\ref{eq:L2decom}), we necessarily have $0 \leq M(\phi^j) <1$ for each $j$ which, by (\ref{eq:Edecom}),  
implies that for $n$ sufficiently large 
\begin{equation}
\label{E:methresh}
M(e^{-it_n^j \partial_x^2} \phi^j)^{\frac{1-\sigma_c}{\sigma_c}} E(e^{-it_n^j \partial_x^2} \phi^j)  \leq A_j,
\end{equation}
with each $A_j < A$.  For a given $j$, there are two possibilities. Case a) $|t_n^j| \to \infty$ as $n\to \infty$ and 
Case b) there is a finite limit $t_*$ such that $t_n^j \to t_*$ as $n\to \infty$. 
Both cases allow us to ensure the existence of a new profile $\tilde{\phi^j} \in H^1$ associated to $\phi^j$ such that 
$$\|\mathrm{NLS}(-t_n^j)\tilde{\phi}^j- e^{-it_n^j\partial_x^2} \phi^j\|_{H^1} \to 0, \quad n\to \infty;$$ 
indeed, 
if Case a) occurs, by the uniform $L^q$ integrability in time of $[e^{-it\partial_x^2} \phi^j](0)$ 
(cf. the same argument in Proposition \ref{P:decomposition}), 
passing to a subsequence of $t_n^j$, 
$$ |[e^{-it_n^j \partial_x^2} \phi^j](0)| \to 0, \quad n\to \infty$$
and thus 
$$ \frac{1}{2}\|\phi^j\|^{\frac{2(1-\sigma_c)}{\sigma_c}}_{L^2} \|\partial_x \phi^j\|_{L^2}^2 <A.$$
Since $A< M(\varphi_0)^{\frac{1-\sigma_c}{\sigma_c}} E(\varphi_0)$, $\phi^j$ satisfies the assumption of Lemma 
\ref{lem:ex_waveop}. Namely, there exists 
$\tilde{\phi^j} \in H^1$ such that 
$$\|\mathrm{NLS}(-t_n^j)\tilde{\phi}^j- e^{-it_n^j\partial_x^2} \phi^j\|_{H^1} \to 0, \quad n\to \infty$$
with 
$$ M(\tilde{\phi^j})=\|\phi^j\|_{L^2}^2, \quad E(\tilde{\phi^j})=\frac{1}{2} \|\partial_x \phi^j\|_{L^2}^2,$$

$$ \|\partial_x \mathrm{NLS}(t) \tilde{\phi^j}\|_{L^2} \|\tilde{\phi}^j\|_{L^2}^{\frac{1-\sigma_c}{\sigma_c}} 
< \|\varphi_0\|_{L^2}^{\frac{1-\sigma_c}{\sigma_c}} \|\partial_x \varphi_0\|_{L^2},$$
and thus
$$M(\tilde{\phi^j})^{\frac{1-\sigma_c}{\sigma_c}}E(\tilde{\phi^j})<A.$$
Therefore by the definition of threshold $A$, we have 
\begin{equation} \label{eq:4}
\|\mathrm{NLS}(t) \tilde{\phi}^j(0)\|_{L^q_{\R_t}} <+ \infty.  
\end{equation}

If the Case b), by the time continuity in $H^1_x$ norm of the linear flow, we know  
$$ e^{-i t_n^j \partial_x^2} \phi^j \to e^{-it_* \partial_x^2} \phi^j ~\mbox{in}~ H^1_x.$$
Thus it suffices to put $\tilde{\phi^j}:= \mathrm{NLS}(t_*)[e^{-it_*\partial_x^2}\phi^j]$. 
Then this $\tilde{\phi}^j$ again satisfies (\ref{eq:4}). 
To see this, note first that by the $H^1$ continuity of the flow, sending $n\to \infty$ in \eqref{E:methresh} gives 
$$M(e^{-it_*\partial_x^2} \phi^j)^{\frac{1-\sigma_c}{\sigma_c}} E(e^{-it_*\partial_x^2} \phi^j) \leq  A_j<A$$
By \eqref{E:dotnorm_expansion} applied for $\sigma_c=0$ and $\sigma_c=1$, and the assumption that $\eta_n(0)<1$ for every $n$, we obtain that 
$$\frac{\| \phi^j \|_{L_x^2}^{\frac{1-\sigma_c}{\sigma_c}} \|\partial_x \phi^j \|_{L_x^2}}{ \|\varphi_0 \|_{L_x^2}^{\frac{1-\sigma_c}{\sigma_c}} \| \partial_x \varphi_0 \|_{L_x^2}} < 1.$$
By the defining property of the threshold $A$, we have that the NLS flow with initial data $e^{-it_*\partial_x^2}\phi^j$ scatters, i.e.
$$\| \mathrm{NLS}(t) \tilde \phi^j(0) \|_{L_{\R_t}^q} = \| \mathrm{NLS}(t+t_*)e^{-it_*\partial_x^2}\phi^j(0) \|_{L_{\R_t}^q} < \infty.$$ 
\vspace{3mm}

Now replace $e^{-i t_n^j \partial_x^2} \phi^j$ by $\mathrm{NLS} (-t_n^j) \tilde{\phi}^j$ in (\ref{eq:1}), and
we have 
$$
\psi_{0,n} = \sum_{j=1}^M \mathrm{NLS}(-t_n^j) \tilde{\phi}^j +\tilde{w}_n^M,
$$
with 
$$\tilde{w}_n^M= w_n^M + \sum_{j=1}^M (e^{-it_n^j \partial_x^2} \phi^j- \mathrm{NLS}(-t_n^j) \tilde{\phi}^j).$$
Note that by Sobolev embedding and Proposition \ref{P:smoothing} (1),   
\begin{eqnarray*}
&&\|[e^{it\partial_x^2} \tilde{w}_n^M](0)\|_{L^q_{\R_t}} \\
&\le&
\|[e^{it\partial_x^2} w_n^M](0)\|_{L^q_{\R_t}} 
+ \sum_{j=1}^M \|[e^{it\partial_x^2}(-\mathrm{NLS}(-t_n^j) \tilde{\phi}^j +e^{-it_n^j \partial_x^2} \phi^j)](0)\|_{L^q_{\R_t}} \\
&\le& \|[e^{it\partial_x^2} w_n^M ](0)\|_{L^q_{\R_t}}  
+ \sum_{j=1}^M \|\mathrm{NLS}(-t_n^j) \tilde{\phi}^j -e^{-it_n^j \partial_x^2} \phi^j\|_{\dot{H}_x^{\sigma_c}}, \\
&\le & 
\|[e^{it\partial_x^2} w_n^M ](0)\|_{L^q_{\R_t}}  
+ \sum_{j=1}^M \|\mathrm{NLS}(-t_n^j) \tilde{\phi}^j -e^{-it_n^j \partial_x^2} \phi^j \|_{H^1_x}. 
\end{eqnarray*}
Thus we obtain, 
$$\lim_{M\to +\infty} [\lim_{n\to +\infty} \|[e^{i t \partial_x^2}\tilde{w}_n^M](0)\|_{L^q_{\R_t}} ]=0.$$
From this way of writing we might approximately see  
$$\mathrm{NLS}(t)\psi_{n,0} \approx \sum_{j=1}^M \mathrm{NLS}(t-t_n^j) \tilde{\phi}^j.$$
However, from (\ref{eq:4}), the RHS is finite in $L^q_{\R_t}$ norm, while the LHS cannot scatter by assumption, and so  
a contradiction could be deduced. We shall justify this argument by Proposition \ref{P:longtimeper}. 
\vspace{3mm}

Let $v^j(t):=\mathrm{NLS}(t) \tilde{\phi}^j,$ $\psi_n:=\mathrm{NLS}(t)\psi_{0,n},$ and $\tilde{\psi}_n=\sum_{j=1}^M v^j(t-t_n^j).$ 
Then, $\tilde{\psi}_n$ satisfies 
$$ i\partial_t \tilde{\psi}_n +\partial_x^2 \tilde{\psi}_n +\delta (|\tilde{\psi}_n|^{p-1} \tilde{\psi}_n +e_n)=0.$$
Here, 
$$ e_n:= - |\tilde{\psi}_n|^{p-1} \tilde{\psi}_n + \sum_{j=1}^M |v^j(t-t_n^j)|^{p-1} v^j(t-t_n^j).$$
We are going to show that
\begin{itemize}
 \item[1] there exists a large constant $A$ independent of $M$ satisfying the following property: 
 for any $M$ there is $n_0=n_0(M)$ such that if $n>n_0,$ $\|\tilde{\psi}_n (0,\cdot)\|_{L^q_{\R_t}} \le A$.
 \item[2] For each $M$ and $\varepsilon>0$ there exists $n_1=n_1(M, \varepsilon)$ such that for $n>n_1$, 
 $\|e_n\|_{L^{\tilde{q}}_{\R_t}} \le \varepsilon.$ 
 \end{itemize}
Remark that there exists $M_1=M_1(\varepsilon)$ such that for each $M>M_1$, there exists $n_2=n_2(M)$ such that 
if $n>n_2$, $\|[e^{it\partial_x^2} (\tilde{\psi}_n(0)-\psi_n(0))](0)\|_{L^q_{\R_t}} \le \varepsilon.$ Thus, if the above 1 and 2 hold, 
it follows 
from Proposition \ref{P:longtimeper} that for $n$ and $M$ sufficiently large, $\|\psi_n\|_{L^q_{\R_t}} < \infty,$ 
which gives a contradiction. Therefore it is enough to prove the above claims 1 and 2. 
First we prove the claim 1. Take $M_0$ large enough so that 
$$\|[e^{it\partial_x^2} w_n^{M_0}](0)\|_{L^q_{\R_t}} \le \delta_{\sd}/2.$$ 
Then, by Lemma \ref{L:lqbd}, for each $j >M_0$, we have $\|[e^{i(t-t_n^j)\partial_x^2} \phi^j](0)\|_{L^q_{\R_t}} \le \delta_{\sd}.$  
Thus by Lemma \ref{lem:ex_waveop} we obtain, for each $j>M_0$, and for large $n$,   
\begin{equation} \label{eq:3}
\|v^j(0,\cdot-t_n^j)\|_{L^q_{\R_t}} \le 2 \|[e^{i(t-t_n^j) \partial_x^2} \phi^j](0)\|_{L^q_{\R_t}}. 
\end{equation}
By Minkowski inequality (since $p>3$), 
\begin{eqnarray*}
&& \|\tilde{\psi_n}(0,\cdot)\|_{L^q_{\R_t}}^q \\
&\le& C_q \Big(\Big\|\sum_{j=1}^{M_0} v^j(0, \cdot-t_n^j)\Big\|_{L^q_{\R_t}}^q 
+\left\|\sum_{j=M_0+1}^{M} v^j(0, \cdot-t_n^j)\right\|_{L^q_{\R_t}}^q \Big)  \\
&\le& C_q \Big(\sum_{j=1}^{M_0} \|v^j(0, \cdot-t_n^j)\|_{L^q_{\R_t}}^2 +  \sum_{j=M_0+1}^{M} \|v^j(0, \cdot-t_n^j)\|_{L^q_{\R_t}}^2 \\
&& +\sum_{j \ne m, j,m=1}^{M_0}\| v^j(0, \cdot-t_n^j) v^m(0, \cdot-t_n^m)\|_{L^{q/2}_{\R_t}}^{q/2} \\
&& +\sum_{j \ne m, j,m=M_0+1}^{M}\| v^j(0, \cdot-t_n^j) v^m(0, \cdot-t_n^m)\|_{L^{q/2}_{\R_t}}^{q/2}
\Big) \\
&\le & C_q \Big(\sum_{j=1}^{M_0} \|v^j(0, \cdot-t_n^j)\|_{L^q_{\R_t}}^2 + \sum_{j=M_0+1}^{M} \|[e^{i(t-t_n^j) \partial_x^2} \phi^j](0) \|_{L^q_{\R_t}}^2 \\ 
&& +\sum_{j \ne m, j,m=1}^{M_0}\| v^j(0, \cdot-t_n^j) v^m(0, \cdot-t_n^m)\|_{L^{q/2}_{\R_t}}^{q/2} \\
&& +\sum_{j \ne m, j,m=M_0+1}^{M}\| v^j(0, \cdot-t_n^j) v^m(0, \cdot-t_n^m)\|_{L^{q/2}_{\R_t}}^{q/2}\Big)
\end{eqnarray*}
where we have used (\ref{eq:3}).
The last terms $\sum_{j \ne m} \|v^j v^m\|_{L^{q/2}_t}$ can be made small if $n$ is large (see the argument below for the claim 2).  
On the other hand, using (\ref{eq:1}), the same argument for (\ref{E:piecewise_expansion}) allows us to obtain  
\begin{eqnarray*}
|[e^{it\partial_x^2}\psi_{0,n}](0)|^q = 
\sum_{j=1}^{M} |[e^{i(t-t_n^j)\partial_x^2} \phi^j](0)|^q + |[e^{it\partial_x^2} w_n^M](0)|^q +o_n(1),  
\end{eqnarray*}
thus, integrating in time, 
\begin{eqnarray*}
\|[e^{it\partial_x^2}\psi_{0,n}](0)\|_{L^q_{\R_t}} &=& 
\sum_{j=1}^{M_0} \|[e^{i(t-t_n^j)\partial_x^2} \phi^j](0)\|_{L^q_{\R_t}} \\ 
&& \hspace{5mm}+\sum_{j=M_0+1}^{M} \|[e^{i(t-t_n^j)\partial_x^2} \phi^j](0)\|_{L^q_{\R_t}}
+ \|[e^{it\partial_x^2} w_n^M](0)\|_{L^q_{\R_t}} + o_n(1)
\end{eqnarray*}
which shows that $\sum_{j=M_0+1}^{M} \|e^{i(t-t_n^j)\partial_x^2} \phi^j \|_{L^q_{\R_t}}^2$ is bounded independently of $M$ if $n>n_0$
since $\|[e^{it\partial_x^2}\psi_{0,n}](0)\|_{L^q_{\R_t}} \le \|\psi_{0,n}\|_{\dot{H}^{\sigma_c}}$. 
Recall that $\|v^j(0, \cdot-t_n^j)\|_{L^q_{\R_t}}=\|\mathrm{NLS}(t) \tilde{\phi}^j (0) \|_{L^q_{\R_t}} <\infty$.
Therefore $\|\tilde{\psi_n}(0,\cdot)\|_{L^q_{\R_t}}^q$ is bounded independently of $M$ provided $n>n_0$.

We next prove the claim 2. We see that $e_n$ is estimated using H\"older inequality with $\frac{1}{\tilde{q}}=\frac{p-2}{q}+\frac{2}{q}$ as follows. 
\begin{eqnarray*}
&&\|e_n\|_{L^{\tilde{q}}_{\R_t}} \\
&\le& C_p \sum_{j=1}^M \Big(\|v^j\|_{L^q_{\R_t}}^{p-2}+ \Big\|\sum_{j=1}^M v^j\Big\|_{L^q_{\R_t}}^{p-2}\Big) 
 \|(v^1+\cdots+v^{j-1}+v^{j+1}+\cdots+v^M)v^j\|_{L_{\R_t}^{q/2}} 
\end{eqnarray*}
where we abbreviated $v^j(0,t-t_n^j)$ as $v^j$. Here, note that by (\ref{eq:4}), for any $\varepsilon>0$, there exists a large $R>0$ such that 
$$ \|\mathrm{NLS}(t-t_n^k) \tilde{\phi}^k (0)\|_{L^{q}(\{t:|t-t_n^k|>R\})} <\varepsilon.$$
Thus, taking large $n$ such that $|t_n^j-t_n^k| >2R$ with $j\ne k$ for such a $R>0$, we can estimate $\|v^j v^k\|_{L^{q/2}_{\R_t}}$ as follows:
\begin{eqnarray*}
\|v^j v^k \|_{L^{q/2}_{\R_t}} &\le& \|[\mathrm{NLS}(t-t_n^j) \tilde{\phi}^j] (0) [\mathrm{NLS}(t-t_n^k) \tilde{\phi}^k] (0)\|_{L^{q/2}_{\R_t}} \\
&\le& \|\mathrm{NLS}(t-t_n^j) \tilde{\phi}^j(0)\|_{L^{q}(\{t:|t-t_n^j|>R\})}\|\mathrm{NLS}(t-t_n^k) \tilde{\phi}^k (0)\|_{L_{\R_t}^{q}} \\
&&+\|\mathrm{NLS}(t-t_n^j) \tilde{\phi}^j (0)\|_{L_{\R_t}^{q}}\|\mathrm{NLS}(t-t_n^k) \tilde{\phi}^k (0)\|_{L^{q}(\{t:|t-t_n^k|>R\})} \\
&\le& C\varepsilon. 
\end{eqnarray*}
This shows that there exists $n_1$ such that the $L^{\tilde{q}}$ norm of $e_n$ is small if $n>n_1(M, \varepsilon)$. 
\vspace{3mm}

Now we consider Case 2. In this case, we have $M(\phi^1) \le 1$ and $\lim_{n\to\infty} E(e^{-it_n^1\partial_x^2} \phi^1) \le A.$ 
As in the Case 1, by the existence of wave operator, there is $\tilde{\phi}^1 \in H^1_x$ such that 
$$\|\mathrm{NLS}(-t_n^1) \tilde{\phi}^1 - e^{-it_n^1 \partial_x^2} \phi^1 \|_{H^1} \to 0, \quad n\to +\infty.$$
Put $$\tilde{w}_n^M := w_n^M -\mathrm{NLS}(-t_n^1) \tilde{\phi}^1 +e^{-it_n^1 \partial_x^2} \phi^1$$
Then we can write 
$$\psi_{0,n} = e^{-it_n^1 \partial_x^2} \phi^1 +w_n^M = \mathrm{NLS}(-t_n^1) \tilde{\phi}^1 +\tilde{w}_n^M,$$
with 
$$ \lim_{M\to \infty} \lim_{n\to \infty} \|[e^{it\partial_x^2} \tilde{w}_n^M](0)\|_{L^q_{\R_t}} =0.$$

Let $\psi_c$ be the solution to (\ref{E:pNLS}) with initial data $\psi_{c}(0)=\tilde{\phi}^1$. 
Now we claim that $\|\psi_c(0,\cdot)\|_{L^q_{\R_t}}=+\infty$ (and thus $M(\psi_c)^{\frac{1-\sigma_c}{\sigma_c}} E(\psi_c)=A$). We proceed as in the Case 1. 
Suppose $A:=\|\psi_c(0,\cdot)\|_{L^q_{\R_t}} <\infty.$ 
By definition, $\|\mathrm{NLS}(t)\tilde{\phi}^1(0)\|_{L^q_{\R_t}}=\|\psi_c(0,\cdot)\|_{L^q_{\R_t}} =A.$
For any shift $t'$, we can say $\|\mathrm{NLS}(t-t') \tilde{\phi}^1(0)\|_{L^q_{\R_t}}=
\|\mathrm{NLS}(t)\tilde{\phi}^1(0)\|_{L^q_{\R_t}},$ thus we take in particular $t'=t_n^1$ and operate 
$\mathrm{NLS}(t)$ to $\psi_{0,n}=\mathrm{NLS}(-t_n^1) \tilde{\phi}^1 +\tilde{w}_n^M.$ 
We apply the perturbation argument by 
Proposition \ref{P:longtimeper} to 
$$ \psi_n = \tilde{\psi}_n + \mathrm{NLS}(t) \tilde{w}_n^M,$$
with $\tilde{\psi}_n=\mathrm{NLS}(t-t_n^1) \tilde{\phi}^1$ 
and $\|\tilde{\psi_n}(0,\cdot)\|_{L^q_{\R_t}} = A<+\infty$. For $n$ and $M$ sufficiently large, 
we have 
$$\|[e^{it\partial^2_x} (\psi_n(0)-  \tilde{\psi}_n(0))] (0)\|_{L^q_{\R_t}}=\|[e^{it\partial_x} \tilde{w}_n^M](0)\|_{L^q_{\R_t}} 
\le \epsilon_0,$$
and also the $L^{\tilde{q}}_t$ norm of the corresponding error term is estimated by $\epsilon_0$,  
where $\epsilon_0=\epsilon_0(A)$ is obtained in Proposition \ref{P:longtimeper}.  
Then, by Proposition \ref{P:longtimeper}, we have $\|\psi_n(0,\cdot)\|_{L^q_{\R_t}} <\infty$, and this is a contradiction to 
non scattering assumption on $\psi_n.$
\vspace{3mm}

On the other hand, the proof of Lemma 5.6 in \cite{HR} allows us to have also, 

\begin{lemma} \label{unif_localization} 
Suppose $\{\psi(t,x), t \ge 0 \}$ is precompact in $H^1_x$. Then 
for any $\varepsilon>0$, there exists $R_{\varepsilon}>0$ such that 
$$\sup_{t \ge 0} \int_{|x| \ge R_{\varepsilon}} (|\psi(x,t)|^2 + |\partial_x \psi(t,x)|^2) dx \le \varepsilon. $$ 
\end{lemma}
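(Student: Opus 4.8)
The plan is to run the standard ``tightness from precompactness'' argument, exactly as in the proof of Lemma 5.6 of \cite{HR}. Denote by $K$ the closure in $H^1_x$ of the orbit $\{\psi(t): t\ge 0\}$; by hypothesis $K$ is compact. Fix $\varepsilon>0$ and set $\delta:=\tfrac{1}{2}\sqrt{\varepsilon}$.

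First I would exploit compactness to reduce to finitely many fixed functions: cover $K$ by finitely many balls $B_{H^1_x}(f_1,\delta),\dots,B_{H^1_x}(f_N,\delta)$ with centers $f_k\in H^1_x$. Since each $f_k$ lies in $H^1_x$, dominated convergence gives $\int_{|x|\ge R}\big(|f_k|^2+|\partial_x f_k|^2\big)\,dx\to 0$ as $R\to\infty$, so one may pick $R_k$ with this tail $\le\delta^2$; then set $R_\varepsilon:=\max_{1\le k\le N}R_k$, which depends only on the finite cover and not on $t$ --- this is precisely what precompactness buys us.

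Next, for an arbitrary $t\ge 0$ choose $k$ with $\|\psi(t)-f_k\|_{H^1_x}<\delta$. The one point needing a little care is that the functional $g\mapsto \big(\int_{|x|\ge R_\varepsilon}(|g|^2+|\partial_x g|^2)\,dx\big)^{1/2}$ is the norm of the pair $(g,\partial_x g)$ in the Hilbert space $L^2(\{|x|\ge R_\varepsilon\})\times L^2(\{|x|\ge R_\varepsilon\})$; hence it obeys the triangle inequality and is dominated by $\|\cdot\|_{H^1_x}$. Consequently
\[
\Big(\int_{|x|\ge R_\varepsilon}\!\big(|\psi(t)|^2+|\partial_x\psi(t)|^2\big)\,dx\Big)^{1/2}\le \Big(\int_{|x|\ge R_\varepsilon}\!\big(|f_k|^2+|\partial_x f_k|^2\big)\,dx\Big)^{1/2}+\|\psi(t)-f_k\|_{H^1_x}\le 2\delta=\sqrt{\varepsilon},
\]
and squaring yields the asserted bound uniformly in $t\ge 0$. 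Since every step is elementary, I do not anticipate a genuine obstacle; the only substantive ingredient is the passage from compactness of $K$ to a single radius $R_\varepsilon$ valid for the whole orbit, and the harmless bookkeeping of constants (using $\delta=\tfrac{1}{2}\sqrt{\varepsilon}$ so that $2\delta=\sqrt{\varepsilon}$).
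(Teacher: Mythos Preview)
Your argument is correct and is precisely the standard tightness-from-precompactness argument that the paper invokes by deferring to Lemma~5.6 of \cite{HR}; the finite-cover formulation you wrote out is equivalent to (and interchangeable with) the sequential-contradiction version used there, so there is no substantive difference in approach.
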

\vspace{3mm}

Using this Lemma and the local viriel identity (\ref{E:local-virial}), we conclude the following proposition.

\begin{proposition} \label{P:localization} Let $p>3$. 
Assume $\psi_0 \in H^1$ satisfies (\ref{E:ME}) and $\eta(0)<1.$ 
Let $\psi(t,x)$ be the global solution to (\ref{E:pNLS}) with the initial data $\psi_0$ satisfying the precompactness:
for any $\varepsilon>0$, there exists $R_{\varepsilon}>0$ such that 
\begin{equation} \label{eq:localization}
\int_{|x| \ge R_{\varepsilon}}(|\psi(x,t)|^2+|\partial_x \psi(x,t)|^2) dx \le \varepsilon, \quad \mbox{for all} \quad t \ge 0.  
\end{equation}
Then $\psi_0 \equiv 0$.
\end{proposition}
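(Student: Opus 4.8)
The plan is to run the classical rigidity (zero-solution) argument via a truncated virial identity, combining \eqref{E:local-virial}, the lower bound \eqref{E:LB}, and the uniform localization \eqref{eq:localization}. First I would introduce a smooth truncation of the quadratic weight: fix $a_R(x)$ with $a_R(x)=x^2$ on $|x|\le R$, $a_R$ constant on $|x|\ge 2R$, and with $\|a_R\|_{L^\infty}\lesssim R^2$, $\|\partial_x^{(2)}a_R\|_{L^\infty}\lesssim 1$, $\|\partial_x^{(4)}a_R\|_{L^\infty}\lesssim R^{-2}$, the implicit constants independent of $R$. Since $a_R\equiv x^2$ near the origin, $a_R(0)=\partial_x a_R(0)=\partial_x^{(3)}a_R(0)=0$ and $\partial_x^{(2)}a_R(0)=2$, so $a_R$ is admissible in \eqref{E:local-virial}. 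Set $z_R(t):=\int a_R(x)|\psi(x,t)|^2\,dx$; by mass conservation $|z_R(t)|\le \|a_R\|_{L^\infty}M(\psi_0)\lesssim R^2 M(\psi_0)$, so $z_R$ is bounded on $[0,\infty)$, uniformly in $t$.

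Next I would compute $z_R''$ from \eqref{E:local-virial}. Writing $\partial_x^{(2)}a_R=2+(\partial_x^{(2)}a_R-2)$, and noting that $\partial_x^{(2)}a_R-2$ and $\partial_x^{(4)}a_R$ are supported in $|x|\ge R$, one obtains
\begin{equation*}
z_R''(t) = 8\|\partial_x\psi(t)\|_{L^2}^2 - 4|\psi(0,t)|^{p+1} + \mathcal{E}_R(t),
\end{equation*}
where $\mathcal{E}_R(t)$ collects $-8\int_{|x|>R}|\partial_x\psi|^2$ together with the $(\partial_x^{(2)}a_R-2)$ and $\partial_x^{(4)}a_R$ terms. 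The uniform weight bounds then give, for $R\ge 1$,
\begin{equation*}
|\mathcal{E}_R(t)| \le C\int_{|x|\ge R}\big(|\psi(x,t)|^2+|\partial_x\psi(x,t)|^2\big)\,dx,
\end{equation*}
with $C$ an absolute constant. By the localization hypothesis \eqref{eq:localization}, for any prescribed $\varepsilon>0$ the choice $R=R_\varepsilon$ makes $|\mathcal{E}_R(t)|\le C\varepsilon$ for all $t\ge0$.

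On the other hand, since $\psi_0$ obeys the strict inequality \eqref{E:ME}, there is $\delta>0$ with $M(\psi_0)^{\frac{1-\sigma_c}{\sigma_c}}E(\psi_0)\le(1-\delta)M(\varphi_0)^{\frac{1-\sigma_c}{\sigma_c}}E(\varphi_0)$, so \eqref{E:LB} yields
\begin{equation*}
8\|\partial_x\psi(t)\|_{L^2}^2 - 4|\psi(0,t)|^{p+1} = 2\big(4\|\partial_x\psi(t)\|_{L^2}^2-2|\psi(0,t)|^{p+1}\big) \ge 2c_\delta\|\partial_x\psi_0\|_{L^2}^2
\end{equation*}
for all $t$. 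Hence $z_R''(t)\ge 2c_\delta\|\partial_x\psi_0\|_{L^2}^2-C\varepsilon$. Now suppose, for contradiction, that $\psi_0\not\equiv0$; then $\|\partial_x\psi_0\|_{L^2}^2>0$ (an $L^2$ function with vanishing weak derivative is constant, hence zero). Choose first $\varepsilon$ so small that $C\varepsilon\le c_\delta\|\partial_x\psi_0\|_{L^2}^2$, and then $R=R_\varepsilon$; this gives $z_R''(t)\ge c_\delta\|\partial_x\psi_0\|_{L^2}^2=:c_0>0$ for all $t\ge0$. Integrating twice in time, $z_R(t)\ge z_R(0)+z_R'(0)t+\frac12 c_0 t^2\to+\infty$ as $t\to+\infty$, contradicting the uniform bound $|z_R(t)|\lesssim R^2 M(\psi_0)$. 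Therefore $\psi_0\equiv0$.

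The only genuinely delicate point is obtaining a lower bound on $z_R''$ that is \emph{uniform in $t$}: this is precisely where the precompactness/localization assumption \eqref{eq:localization} enters, to keep the virial error $\mathcal{E}_R(t)$ negligible for all $t\ge 0$. One must also be careful about the order of quantifiers — the truncation radius $R$ is selected only after $\varepsilon$ has been fixed in terms of $\|\partial_x\psi_0\|_{L^2}$, the absolute constant in the bound on $\mathcal{E}_R$, and the constant $c_\delta$ from \eqref{E:LB} — and about the normalization $\partial_x^{(2)}a_R(0)=2$, which is what makes the leading part of $z_R''$ equal to twice the quantity appearing in \eqref{E:LB}. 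The remaining manipulations (the explicit form of $z_R''$ from the virial identity, finiteness of $z_R'(0)$, boundedness of $z_R$) are routine.
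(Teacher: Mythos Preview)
Your argument is correct and follows essentially the same route as the paper's proof: a truncated virial weight, the lower bound \eqref{E:LB} on the main term, and the localization hypothesis \eqref{eq:localization} to absorb the tail error, leading to a uniform positive lower bound on $z_R''$ and a contradiction. The only cosmetic difference is that the paper integrates $z_R''$ once and contradicts the bound $|z_R'(t)|\lesssim R$ (which uses the uniform $\dot H^1$ control from $\eta(t)<1$), whereas you integrate twice and contradict the mass-based bound $|z_R(t)|\lesssim R^2 M(\psi_0)$; both are standard and equivalent here.
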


\proof Take $a(x)$ in the localized virial (\ref{E:local-virial}), 
as, for $R>0$ (which will be determined later), and for all $x\in \mathbb{R}$, 
$$ a(x) =R^2 \chi \Big(\frac{|x|}{R}\Big),$$ 
where $\chi \in C_0^{\infty}(\mathbb{R}^{+})$, $\chi(r)=r^2$ for $r \le 1$, and $\chi(r)=0$ for $r \ge 2$. 
Put $z_R(t):=\int_{\R} a(x) |\psi|^2 dx$, then we have 
\begin{equation*}
z'_R(t)=-2R \Im \int_{\R} \chi'\Big(\frac{|x|}{R}\Big) \partial_x \psi \overline{\psi} dx, 
\end{equation*}
and
\begin{eqnarray} \label{specialcut} \nonumber
z_R''(t)
&=&  
8 \int_{|x| \le R} |\partial_x \psi|^2 dx + 4 \int_{R < |x|<2R} \chi'' \Big(\frac{|x|}{R} \Big) |\partial_x \psi|^2 dx \\ \nonumber
&&
-\frac{1}{R^2}\int_{R < |x| <2R} \chi^{(4)} \Big(\frac{|x|}{R}\Big) |\psi|^2 dx
-4 |\psi(0)|^{p+1}  \\  \nonumber
& \ge & 
2\{4 \int_{|x| \le R} |\partial_x \psi|^2 dx-2|\psi(0)|^{p+1} \} -C_0 \int_{R < |x| <2R} (|\partial_x \psi|^2 + \frac{1}{R^2}|\psi|^2) dx \\
& \ge & 
2\{4 \int_{|x| \le R} |\partial_x \psi|^2 dx-2|\psi(0)|^{p+1} \} -C_0 \int_{R < |x|} (|\partial_x \psi|^2 + \frac{1}{R^2}|\psi|^2) dx
\end{eqnarray}
with a constant $C_0=C_0 (\|\chi''\|_{L^{\infty}}, \|\chi^{(4)}\|_{L^{\infty}})$ uniform in $R$.  

Take $0<\delta<1$ such that 
$$ M(\psi_0)^{\frac{1-\sigma_c}{\sigma_c}}E(\psi_0)\le (1-\delta)M(\varphi_0)^{\frac{1-\sigma_c}{\sigma_c}}E(\varphi_0), $$
then by (\ref{E:LB}), there exists $c_{\delta}>0$ such that for any $t\in \R$ 
\begin{equation} \label{eq:pre}
4 \int_{|x| \le R} |\partial_x \psi|^2 dx-2|\psi(0)|^{p+1} \ge c_{\delta} \|\partial_x \psi_0\|_{L^2}^2-4 \int_{|x| >R} |\partial_x \psi|^2 dx.
\end{equation}
Now, we choose  
$\varepsilon=\frac{c_\delta}{8+C_0}\|\partial_x \psi_0\|_{L^2}^2$ in (\ref{eq:localization}), 
then for sufficiently large $R_1 >\max\{1, R_{\varepsilon}\}$,  
$$\int_{|x| >R_1} \Big(|\partial_x \psi|^2 +\frac{1}{R_1^2}|\psi|^2 \Big) dx 
\le \int_{|x| >R_1} \Big( |\partial_x \psi|^2 +|\psi|^2 \Big) dx \le \varepsilon
=\frac{c_\delta}{8+C_0}\|\partial_x \psi_0\|_{L^2}^2.$$
Thus, by the choice of $R=R_1$, we have $(\ref{eq:pre}) \ge c_{\delta} \|\partial_x \psi_0 \|_{L^2}^2-4 \varepsilon$ 
and so 
$$z''_{R_1}(t) \ge c_{\delta} \|\partial_x \psi_0\|_{L^2}^2. $$
Integration in time then implies 
$$
z'_{R_1}(t) -z'_{R_1}(0) \ge c_{\delta} t\|\partial_x \psi_0\|_{L^2}^2.
$$
On the other hand, 
$$|z'_{R_1}(t) -z'_{R_1}(0)| \le C R_1$$
where $C$ depends on $p, \|\psi_0\|_{L^2}$, and $\|\partial_x \psi_0\|_{L^2}.$
This is absurd except the case $\psi_0 \equiv 0$. 
\hfill\qed
\vspace{3mm}

Finally we complete our arguments with  
\begin{proposition}
\begin{equation*}
K=\{\psi_c(t), t \ge 0 \} \subset H^1_x 
\end{equation*}
with $\psi_c$ obtained above as the minimal non scattering solution, is precompact in $H^1_x$.
\end{proposition}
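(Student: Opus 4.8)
The plan is to argue by contradiction with the profile decomposition and the long-time perturbation theory, exactly as in the construction of the minimal non scattering solution above, now applied to a sequence of snapshots of $\psi_c$ rather than to a sequence of initial data. Suppose $K$ is not precompact, so there is $t_n\ge 0$ with $\{\psi_c(t_n)\}$ admitting no $H^1_x$-convergent subsequence. If $\{t_n\}$ were bounded, continuity of the $H^1_x$ flow (local theory) would produce such a subsequence, so I may take $t_n\to+\infty$. Since $\eta_{\psi_c}(0)<1$, Theorem \ref{T:global-blowup} bounds $\|\psi_c(t_n)\|_{H^1_x}$ uniformly, so Proposition \ref{P:decomposition} applies: after passing to a subsequence,
$$\psi_c(t_n)=\sum_{j=1}^M e^{-it_n^j\partial_x^2}\phi^j+w_n^M,$$
with the $\dot H^0$ and $\dot H^1$ decouplings \eqref{E:dotnorm_expansion}, the trace decoupling \eqref{E:piecewise_expansion} (hence energy decoupling), pairwise divergence of the $t_n^j$, and $\lim_M\lim_n\|[e^{it\partial_x^2}w_n^M](0)\|_{L^q_{\R_t}}=0$. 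Using \eqref{E:dotnorm_expansion} for the exponents $0$ and $1$ together with $\eta_{\psi_c(t_n)}<1$, I first record that for large $n$ every $\phi^j$ and every $w_n^M$ satisfies both threshold conditions $M(\cdot)^{\frac{1-\sigma_c}{\sigma_c}}E(\cdot)<M(\varphi_0)^{\frac{1-\sigma_c}{\sigma_c}}E(\varphi_0)$ and $\eta(\cdot)<1$, so by \eqref{E:Grad_Energy} all energy terms are $\ge0$ and $\gtrsim\|\partial_x\cdot\|_{L^2}^2$; in particular $\sum_j\lim_n M(\phi^j)\le M(\psi_c)$ and $\sum_j\lim_n E(e^{-it_n^j\partial_x^2}\phi^j)\le E(\psi_c)$.

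Next I would rule out two or more nonzero profiles. If there were, each $\phi^j$ would carry strictly less mass than $\psi_c$, hence (since $\tfrac{1-\sigma_c}{\sigma_c}>0$) for large $n$ one has $M(\phi^j)^{\frac{1-\sigma_c}{\sigma_c}}E(e^{-it_n^j\partial_x^2}\phi^j)<A$; I then repeat verbatim Case 1 above with $\psi_{0,n}$ replaced by $\psi_c(t_n)$: build nonlinear profiles $\tilde\phi^j$ via Lemma \ref{lem:ex_waveop} and its $t<0$ analogue (according to whether $t_n^j$ converges or diverges), each scattering so that $\|\mathrm{NLS}(t)\tilde\phi^j(0)\|_{L^q_{\R_t}}<\infty$; set $\tilde\psi_n=\sum_j\mathrm{NLS}(t-t_n^j)\tilde\phi^j$, which by the Minkowski inequality (here $p>3$), Lemma \ref{L:lqbd}, and the pairwise divergence of $\{t_n^j\}$ has $\|\tilde\psi_n(0,\cdot)\|_{L^q_{t>0}}$ bounded independently of $M$, error $e_n\to0$ in $L^{\tilde q}_{t>0}$, and $\|[e^{it\partial_x^2}(\psi_c(t_n)-\tilde\psi_n(0))](0)\|_{L^q_{t>0}}\to0$. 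Proposition \ref{P:longtimeper} then yields $\|\psi_c(t_n+\cdot)(0,\cdot)\|_{L^q_{t>0}}<\infty$; but $\mathrm{NLS}(\cdot)\psi_c(t_n)=\psi_c(t_n+\cdot)$ is non scattering since $\psi_c$ is and $t_n<\infty$ --- contradiction. Because the construction in Proposition \ref{P:decomposition} terminates only when the remainder's $L^q$ norm tends to $0$, ``at most one profile'' means precisely that there is a single profile $\phi^1$ with $\lim_n\|[e^{it\partial_x^2}w_n^1](0)\|_{L^q_{\R_t}}=0$.

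It then remains to show that $t_n^1$ stays bounded and $w_n^1\to0$ in $H^1_x$. If $t_n^1\to-\infty$, the change of variables $s=t-t_n^1$ gives $\|[e^{i(t-t_n^1)\partial_x^2}\phi^1](0)\|_{L^q_{t>0}}=\|[e^{is\partial_x^2}\phi^1](0)\|_{L^q_{s>-t_n^1}}\to0$, so $\|[e^{it\partial_x^2}\psi_c(t_n)](0)\|_{L^q_{t>0}}\to0$ and Proposition \ref{P:smalldata} makes $\psi_c(t_n+\cdot)$ scatter forward --- contradiction. If $t_n^1\to+\infty$, the same computation on $t<0$ forces $\psi_c$ to scatter as $t\to-\infty$; I would exclude this by first arranging, via the time-reversal symmetry $\psi(x,t)\mapsto\overline{\psi(x,-t)}$ of \eqref{E:pNLS} (which fixes $M$, $E$, hence the threshold $A$) together with $\|\psi_c(0,\cdot)\|_{L^q_{\R_t}}=+\infty$, that the minimal non scattering solution be taken non scattering in both time directions. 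Hence, along a subsequence, $t_n^1\to t_*\in\R$, $e^{-it_n^1\partial_x^2}\phi^1\to\Phi:=e^{-it_*\partial_x^2}\phi^1$ in $H^1_x$, and $\psi_c(t_n)=\Phi+\tilde w_n$ with $\tilde w_n\rightharpoonup0$ and $\|[e^{it\partial_x^2}\tilde w_n](0)\|_{L^q_{\R_t}}\to0$. If $\tilde w_n\not\to0$ in $H^1_x$, then combining the $\dot H^0$/$\dot H^1$ decouplings with the coercive bound $E(\tilde w_n)\gtrsim\|\partial_x\tilde w_n\|_{L^2}^2$ of \eqref{E:Grad_Energy} shows $\Phi$ is strictly below threshold, $M(\Phi)^{\frac{1-\sigma_c}{\sigma_c}}E(\Phi)<A$, so $\mathrm{NLS}(t)\Phi$ scatters; feeding $\tilde\psi=\mathrm{NLS}(\cdot)\Phi$ (with $e\equiv0$) into Proposition \ref{P:longtimeper} again makes $\psi_c$ scatter forward --- contradiction (and $\Phi=0$ is excluded as in the $t_n^1\to-\infty$ case, via Proposition \ref{P:smalldata}). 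Therefore $\tilde w_n\to0$, i.e. $\psi_c(t_n)\to\Phi$ in $H^1_x$, contradicting the choice of $\{t_n\}$; hence $K$ is precompact.

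I expect the main obstacle to be excluding the escape scenario $t_n^1\to+\infty$: forward non scattering of $\psi_c$ does not by itself preclude backward scattering, so one must genuinely establish that a minimal non scattering solution may be taken non scattering in both time directions. A second, more routine but still delicate point is the vanishing of $\tilde w_n$, where the minimality ``$=A$'' enters together with the Gagliardo--Nirenberg coercivity of Proposition \ref{P:sharpGN} / \eqref{E:Grad_Energy} to upgrade ``$\tilde w_n$ does not vanish'' into ``$\Phi$ is strictly sub-threshold''.
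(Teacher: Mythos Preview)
Your approach is correct and is exactly what the paper intends: the authors omit the proof, saying only that it ``is similar to the proof for the existence of $\psi_c$,'' i.e.\ profile decomposition on the snapshots $\psi_c(t_n)$ followed by the long-time perturbation Proposition~\ref{P:longtimeper}, which is precisely what you do.

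One simplification: the obstacle you anticipate in the case $t_n^1\to+\infty$ is not real, and you do not need to arrange two-sided non scattering. With $t_n\to+\infty$ and $t_n^1\to+\infty$, the backward small-data argument from time $t_n$ gives $\|\psi_c(0,\cdot)\|_{L^q_{(-\infty,t_n)}}\le 2\delta_{\sd}$ for all large $n$; since $t_n\to+\infty$ this already bounds $\|\psi_c(0,\cdot)\|_{L^q_{[0,\infty)}}$, so $\psi_c$ scatters \emph{forward}, contradicting the construction of $\psi_c$ directly. The time-reversal detour is unnecessary.
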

The proof for this proposition is similar to the proof for the existence of $\psi_c$, and we omit it. 
We apply Proposition \ref{P:localization} to $\psi_c$, and we have $\psi_c (0) \equiv 0$, which contradicts 
the fact that $\|\psi_c(0,\cdot)\|_{L^q_{\R_t}}=+\infty$. This concludes the statement of Theorem \ref{T:main}. 
\hfill\qed
\vspace{3mm}

\noindent
{\bf Acknowledgment} 
The work described in this paper is a result of a collaboration made possible by the IMA's 
annual program workshop "Mathematical and Physical Models of Nonlinear Optics." 
This work was supported by JSPS KAKENHI Grant Number 15K04944.

\end{document}